\theoremstyle{plain}
\newtheorem*{maintheorem}{\hypertarget{h:theorem}{Theorem}}
\newtheorem*{corollary*}{Corollary}
\newtheorem*{uproperties}{\hypertarget{h:uproperties}{Universe Properties}}
\newtheorem*{dproperties}{Dimension Properties}
\newtheorem*{divproperties}{\hypertarget{h:divproperties}{Divisibility Properties}}
\newtheorem*{cproperties}{Connectedness Properties}
\newtheorem*{chlemma}{\hypertarget{h:chlemma}{Characteristic Lemma}}
\newtheorem*{clemma}{\hypertarget{h:clemma}{Coprimality Lemma}}
\newtheorem*{wlemma}{\hypertarget{h:wlemma}{Weight Lemma}}
\newtheorem*{rlemma}{\hypertarget{h:rlemma}{Recognition Lemma}}
\newtheorem*{elemma}{\hypertarget{h:elemma}{Extension Lemma}}
\newtheorem*{glemma}{\hypertarget{h:glemma}{Geometrisation Lemma}}
\newtheorem*{fglemma}{\hypertarget{h:fglemma}{First Geometrisation Lemma}}
\theoremstyle{definition}
\newtheorem*{definition*}{Definition}
\newtheorem*{fact*}{Fact}
\newtheorem*{example*}{Example}
\newtheorem*{counterexample*}{Counterexample}
\newtheorem*{remark}{Remark}
\newtheorem*{remarks}{Remarks}
\newtheorem*{examples}{Examples}
\newtheorem*{conjecture*}{Conjecture}
\newtheorem*{question*}{Question}
\newtheorem*{problem*}{Problem}
\newtheorem*{notation}{Notation}
\theoremstyle{remark}
\newtheorem{claim}{Claim}
\newenvironment{proofclaim}[1][Proof of Claim]{\begin{proof}[#1]}{\end{proof}}
\newcommand{\qoplus}{\mathbin{\scalerel*{\left(\mkern-2mu+\mkern-2mu\right)}{\bigcirc}}}
\DeclareMathOperator*{\bigqoplus}{\left(\mkern-2mu\scalerel*{+}{\bigoplus}\mkern-2mu\right)}
\DeclareMathOperator{\ad}{ad}
\DeclareMathOperator{\tr}{tr}
\DeclareMathOperator{\im}{im}
\DeclareMathOperator{\charac}{char}
\DeclareMathOperator{\Aut}{Aut}
\DeclareMathOperator{\End}{End} 
\DeclareMathOperator{\Sym}{Sym} 
\DeclareMathOperator{\Alt}{Alt}
\DeclareMathOperator{\ustd}{std}
\DeclareMathOperator{\rstd}{\overline{std}}
\DeclareMathOperator{\sgn}{sgn}
\DeclareMathOperator{\perm}{perm}
\DeclareMathOperator{\Out}{Out}
\DeclareMathOperator{\GL}{GL}
\DeclareMathOperator{\PSL}{PSL} 
\DeclareMathOperator{\PGL}{PGL} 
\DeclareMathOperator{\SL}{SL}
\DeclareMathOperator{\HSP}{HSP}
\newcommand{\cU}{\mathcal{U}}
\newcommand{\Ab}{\mathbf{Ab}}
\DeclareMathOperator{\Ar}{Ar}
\DeclareMathOperator{\Ob}{Ob}
\DeclareMathOperator{\Id}{Id}
\newcommand{\Mod}{\mathbf{Mod}}
\newcommand{\generated}[1]{\left\langle#1\right\rangle}
\newcommand{\bF}{\mathbb{F}}
\newcommand{\bK}{\mathbb{K}}
\newcommand{\bN}{\mathbb{N}}
\newcommand{\bQ}{\mathbb{Q}}
\newcommand{\bZ}{\mathbb{Z}}
\newcommand{\bC}{\mathbb{C}}
\begin{document}
\title[$\Sym(n)$- and $\Alt(n)$-modules]{$\Sym(n)$- and $\Alt(n)$-modules with an additive dimension\\ (Paris Album No.~2)
}
\author{Luis Jaime Corredor}
\address{Departamento de Matem\'aticas, Universidad de los Andes, Bogot\'a, Colombia}
\email{lcorredo@uniandes.edu.co}
\author{Adrien Deloro}
\address{Sorbonne Universit\'e, Institut de Math\'ematiques de Jussieu-Paris Rive Gauche, \textsc{cnrs}, Universit\'e Paris Diderot
and \'Ecole Normale Sup\'erieure-\textsc{psl}, D\'epartement de Math\'ematiques et Applications}
\email{adrien.deloro@imj-prg.fr}
\author{Joshua Wiscons}
\address{Department of Mathematics and Statistics\\
California State University, Sacramento\\
Sacramento, CA 95819, USA}
\email{joshua.wiscons@csus.edu}
\date{\today}
\keywords{symmetric groups, finite-dimensional algebra, first-order representation theory}
\subjclass[2020]{Primary 20C30; Secondary 03C60, 20F11}




\newpage

\begin{abstract}
We revisit, clarify, and generalise classical results of Dickson and (much later) Wagner on minimal $\Sym(n)$- and $\Alt(n)$-modules.
We present a new, natural notion of `modules with an additive dimension' covering at once the classical, finitary case as well as modules definable in an $o$-minimal or finite Morley rank setting; 
in this  context, we fully identify the faithful $\Sym(n)$- and $\Alt(n)$-modules of least dimension.
%
\end{abstract}

\maketitle



\vspace{-10pt}
\begin{center}
\S~\ref{S:introduction}. Introduction \quad --- \quad \S~\ref{S:context}. The context \quad --- \quad \S~\ref{S:proof}. The proof 
\end{center}
\vspace{5pt}


\section{Introduction}\label{S:introduction}

This work belongs to the topic of \emph{first-order representation theory}, i.e.~representation theory viewed through an elementary lens.
Here the focus in on the category of explicitly constructible objects, or what mathematical logic calls the \emph{definable} category; a consequence is that we avoid any reference to characters.
This is not motivated by the mere `purity of methods' but by questions in model theory.
The topic is naturally emerging out of several recent works, 
rooted in the 2008 article of Alexandre Borovik and Gregory Cherlin \cite{BoCh08} and pushed further by papers such as \cite{DeA09,BoDe15,BoBe18, BoA20, BeBo21}.
(Another road to the effective understanding of geometric algebra is \emph{black box algebra} as in \cite{BY18} and ongoing work.)
We stress that, though inspired by model theory, the present article is likely to be of broad interest; no exposure to model theory is required to understand our work.


We give a significant expansion and clarification of a classical result by Leonard Dickson from 1908 on the minimal linear representations of symmetric groups. 
Our \hyperlink{h:theorem}{Theorem} identifies the minimal faithful representations of $\Sym(n)$ and $\Alt(n)$ on finite or infinite abelian groups in the presence of a rudimentary notion of dimension but no a priori---and often no a posteriori---vector space structure. This may be viewed as a natural evolution of linear representation theory, one which focuses more on `elementary' properties (e.g.~generators and relations) and less on higher structure. In the case of algebraic groups, which we keep in mind for the future, our point of view would be quite in the spirit of the Chevalley-Steinberg approach.
We stress that our context does \emph{not} allow for character theory nor even Maschke's Theorem, making matters non-trivial though basic.

All that remains from the usual linear theory is a loose form of dimensionality.
The study 
of structures whose definable sets are equipped with one of various 
notions of dimension is a central theme in model theory, and our work here treats numerous classes at once---including groups of finite Morley rank and $o$-minimal groups, but also finite groups---in a common, natural, and new setting.

Our original motivation was a set of concrete model-theoretic problems.
One such is an application to permutation groups possessing a high degree of \emph{generic} transitivity. 
A study of these was initiated in the setting of groups of finite Morley rank in work by Borovik and Cherlin where they posed the 
problem of showing that generic $(n+2)$-transitivity on a set of Morley rank $n$ implies that the group is $\PGL_{n+1}(\bK)$ in its natural action on $\mathbb{P}(\bK)$. More information and explicit connections to the present work can be found in \cite{BoCh08} as well as in the 2018 paper of Tuna Alt\i{}nel and the third author \cite{AlWi18}, which solves the $n=2$ case. The present work  grew out of the third author's desire to generalise the $n=2$ approach to $n\ge 3$, but the topic turned out extremely interesting in its own right.


\subsection{The result}\label{s:results}
Our main result generalizes a century-old theorem by Dickson \cite{DiL08} and its more recent companions \cite{WaA76,WaA77}; it also corrects and expands on \cite[Lemma~4.6]{BoCh08}. In doing so, our treatment handles simultaneously the finite and the `tame infinite', in a sense that model theory seeks to carve out.

We study $\Sym(n)$ and $\Alt(n)$-modules $V$ that carry a basic notion of dimension on certain groups (and quotients) definable---in the logical sense---from $V$ and the acting group. To the logician we must stress that our dimension need not apply to all definable \emph{sets}; we only require it to be defined on the intersection of the `definable universe' in the sense of logic and the `variety generated by $V$' in the sense of universal algebra.
This intuition is  axiomatised in Section~\ref{S:context} via the definitions of a \emph{modular universe} and an \emph{additive dimension}; the relevant notions of connectedness (\emph{dim-connectedness}) and irreducibility (\emph{dc-irreducibility}) are also key. 

As one expects, the `characteristic' of a module is an important parameter: $V$ is said to have \emph{prime characteristic $p$} if it has exponent $p$ and \emph{characteristic $0$} if it is divisible. This  definition allows for modules without a well-defined characteristic such as $\bZ/12\bZ$; it also allows for torsion modules of characteristic $0$ such as the Pr\"ufer quasi-cyclic groups $C_{p^\infty}$.

In the classical setting, the minimal faithful representations for $\Sym(n)$ and $\Alt(n)$ are canonical and easy to construct, assuming $n$ is large enough.
Among other places, they appear in \cite{DiL08}, but we briefly describe them here. This also gives us the opportunity to introduce  notation.

\begin{notation}[standard module]
Let $S:=\Sym(n)$.
\begin{enumerate}
\item
Let $\perm(n,\bZ) = \bZ e_1\oplus \cdots \oplus \bZ e_n$ be the $\bZ[S]$-module with $S$ permuting the $e_i$ naturally. 
There are two obvious submodules:
\begin{itemize}
\item 
$\ustd(n, \bZ) := [S,\perm(n, \bZ)] = \{\sum_i c_i e_i : c_i\in \bZ \text{ and } \sum c_i = 0\}$; 
\item
$Z(\perm(n, \bZ)) := C_{\perm(n,\bZ)}(S) = \{\sum_i ce_i : \text{$c\in \bZ$}\}$, 
\end{itemize}
using usual notation for commutators and centralisers.
Over $\bZ$ these are disjoint but not so in general.
\item 
For any abelian group $L$ (considered as a trivial $S$-module), define:
\begin{itemize}
\item 
$\perm(n,L) := \perm(n,\bZ) \otimes_\bZ L$;
\item
$\ustd(n,L) := [S,\perm(n, L)] = \ustd(n,\bZ) \otimes_\bZ L$; 
\item
$Z(\ustd(n, L)) := C_{\ustd(n,L)}(S)$.
\end{itemize}
We arrive at the canonical subquotient:
\begin{itemize}
\item 
$\rstd(n, L) = \ustd(n, L)/Z(\ustd(n, L))$,
\end{itemize}
which  we refer to as the (reduced) \emph{standard module} over $L$.
\item When $L=C_k$ is cyclic of order $k$, we simply write $\perm(n, k)$, $\ustd(n, k)$,  and $\rstd(n, k)$.
\end{enumerate}
\end{notation}

\begin{remarks}\leavevmode
\begin{itemize}
\item Notice how $Z(\ustd(n,L)) = \{\sum e_i\otimes a : a\in \Omega_n(L)\}$, so $\rstd(n, L)$ differs from $\ustd(n,L)$ only when $\Omega_n(L)\neq 0$. (Here $\Omega_n(L)$  denotes the subgroup of elements of order dividing $n$.)
\item The module $\perm(n,L)$ may be realised as $L^n$ under $\sigma(a_1,\ldots,a_n) = (a_{\sigma^{-1}(1)},\ldots,a_{\sigma^{-1}(n)})$; it is easily constructed from $L$ together with the action of each element of $S$. 
Model theorists will recognize this as an \emph{interpretable object}, which we simply call \emph{definable}.
\end{itemize}
\end{remarks}

The classical setting is $\ustd(n,p)$ for $p$ a prime. Here, $\ustd(n,p)$ is irreducible of dimension $n-1$ whenever $p\nmid n$  
(with the same true for $\ustd(n, \bQ)$). However, when $p\mid n$ and $n \geq 5$, the module $\rstd(n, p)$ is faithful and irreducible of dimension $n-2$, a point which fails of $\rstd(4, 2)$.

Less classical is the following example regarding actions on tori.

\begin{example*}
Notice how a maximal torus of $\GL_{n}(\bC)$ is a $\Sym(n)$-module via the action of the Weyl group. 
As $\Sym(n)$-modules, one finds that
\begin{itemize}
\item $\perm(n,\bC^\ast)$ is isomorphic to a maximal torus of $\GL_{n}(\bC)$; 
\item $\ustd(n,\bC^\ast)$ is isomorphic to a maximal torus of $\SL_{n}(\bC)$;  
\item $\rstd(n,\bC^\ast)$ is isomorphic to a maximal torus of $\PSL_{n}(\bC)$. 
\end{itemize}
The various finite subgroups of $n^\text{th}$-roots of unity  yield finite submodules $C \le Z(\ustd(n, \bC^\ast))$, each naturally isomorphic to some $Z \le Z(\SL_{n}(\bC^\ast))$. In each case, $\ustd(n, \bC^\ast)/C$ is isomorphic to the maximal torus of $\SL_{n}(\bC)/Z$. The modules $\ustd(n, \bC^\ast)/C$ all satisfy the relevant notion of irreducibility here (dc-irreducibility) and must be accounted for in our classification. 
\end{example*}

We now state our main result. In what follows, $\Mod(G, d, q)$ denotes the class of all $G$-modules (\S~\ref{s:universe}) that carry an additive dimension (\S~\ref{s:dimension}) and are dim-connected (\S~\ref{s:dc}) of dimension $d$ and characteristic $q$ (\S~\ref{s:characteristic}). Also, $V\in \Mod(G, d, q)$ is \emph{dc-irreducible} (\S~\ref{s:dcirreducible}) if $V$ contains no non-trivial, proper, dim-connected $G$-submodule. Details are in \S~\ref{S:context};  the proof is in \S~\ref{S:proof}.

\begin{maintheorem}\label{t:main}
Let $G = \Alt(n)$ or $\Sym(n)$. Suppose $V \in \Mod(G, d, q)$ is faithful and dc-irreducible with $d < n$.
Assume $n\ge 7$; if $G = \Alt(n)$ and $q=2$, further assume that $n \geq 10$. 

Then there is a dim-connected submodule $L \le V$ such that the structure of $V$ falls into one of the following cases:
\begin{center}
\tabulinesep = 1.2mm
\begin{tabu} {X[0.8,c,m]|[1pt]X[0.3,c,m]|[1pt]X[2,c,m]}
$q$  & $d$   & Structure of $V$ \\ \tabucline[2pt]{-}
$q > 0$ and $q\mid n$ & $n-2$ & isomorphic to $\rstd(n, L)$ or $\sgn \otimes\,   \rstd(n, L)$ \\ \tabucline[1pt]{-}
$q > 0$ and $q\nmid n$ & $n-1$ & isomorphic to $\rstd(n, L)$ or $\sgn \otimes  \rstd(n, L)$ \\ \tabucline[1pt]{-}
$q = 0$ & $n-1$ & covered by $\ustd(n, L)$ or $\sgn \otimes \ustd(n, L)$ 
\end{tabu}
\end{center}
Moreover, when $q = 0$, the kernel of the covering map is 
$\langle{\sum_{i = 1}^{n-1} (e_i-e_n)}\rangle\otimes K$, in usual notation, for some $K \leq \Omega_n(L)$.
\end{maintheorem}

\begin{remarks}\hfill
\begin{itemize}
\item Note that $q =\charac L$, and if $q>0$, $\rstd(n, L)$ 
is completely reducible as  $\bigoplus_X \rstd(n, q)$ for $X$ an $\bF_q$-basis of $L$.  The situation when $q=0$ is complicated by tori like those given in the  example above.
\item The restrictions on $n$ are optimal. For example, in characteristic $3$, one has $\Alt(6)\simeq \PSL_2(\bF_9)$ with the  adjoint representation in dimension $3$. In characteristic $2$,  $\Alt(9)$ has three faithful representation over $\mathbb{F}_2$ of (least) degree $8$~\cite{atlas3}. 
The two exceptional representations were missed in \cite[(4.5)~Lemma]{WaA76};
this was corrected by G.D.~James in  \cite[Theorem~6]{JaG83}.

In our more general setting, one can still establish the lower bound of $8$ on the dimension of a faithful $\Alt(9)$-module in characteristic $2$, but we do not achieve (nor even try for) identification. (See the remark following the proof of the \hyperlink{h:glemma}{Geometrisation Lemma}.)
\end{itemize}
\end{remarks}

\subsection{Lingering questions}\label{s:questions}

The \hyperlink{h:theorem}{Theorem} places natural restrictions on $n$, but in fact,
 the minimal dimension of a faithful $\Sym(n)$-module is indeed as expected \emph{for all $n$} as a consequence of our \hyperlink{h:fglemma}{First Geometrisation Lemma}, where we also identify those of dimension $(n-2)$. However, identification of the dc-irreducible modules in $\Mod(\Sym(5), 4)$ and $\Mod(\Sym(6), 4)$ remains open.
(Do note that $\Mod(\Sym(5), 4, 2)$ contains irreducible modules coming from the so-called Specht module for the partition $(3,2)$. See for example \cite[5.2~Example]{JaG78}.) 

Identification of the minimal faithful $\Alt(n)$-modules for small $n$ is also open. 
Reconstructing the adjoint action of $\Alt(6) \simeq \PSL_2(\bF_9)$ is a problem of particular interest. One also has $\Alt(8)\simeq \SL_4(\bF_2)$ with the natural action as well as the particularly exceptional $\Alt(5)$: it appears as $\SL_2(\bF_4)$ in characteristic $2$, as $\PSL_2(5)$ in characteristic $5$, and as the symmetries of a regular icosahedron in all other characteristics (over a field where $5$ is a square). The following table summarizes the conjectural lower bounds.

\begin{table}[H]
{\small
\tabulinesep = 1.2mm
\begin{tabu} {X[1.1,$c,m]|[2pt]X[$c,m]|[1pt]X[$c,m]|[1pt]X[$c,m]|[1pt]X[$c,m]|[1pt]X[$c,m]}
 p      & 2   & 3   & 5   & 7   & >7\text{ or } 0 \\ \tabucline[2pt]{-}
\Alt(5) & 2   & 3   & 3   & 3   & 3   \\ \tabucline[1pt]{-}
\Alt(6) & 4   & 3   & 5   & 5   & 5   \\ \tabucline[1pt]{-}
\Alt(7) & 4   & 6   & 6   & 5   & 6   \\ \tabucline[1pt]{-}
\Alt(8) & 4   & 7   & 7   & 7   & 7
\end{tabu}
}
\caption*{Conjectural minimal dimension for faithful $\Alt(n)$-modules in characteristic $p$ with small $n$.}
\label{tab.SmallDimension}
\end{table}

\begin{problem*}
Identify the minimal faithful $\Sym(n)$- $\Alt(n)$-modules for all  $n$.
\end{problem*}

Of course, one could target  higher-dimensional  dc-irreducible $\Sym(n)$- $\Alt(n)$-modules, but this appears to be out of reach at present. However, simply identifying a reasonable lower bound for the dimension of the `second smallest' dc-irreducible module would be welcome. Following the classical case, we expect something along the lines of $n(n-5)/2$ (see~\cite{JaG83}).

\begin{problem*}
Let $G = \Alt(n)$ or $\Sym(n)$ with $n$ sufficiently large. Prove that if $V \in \Mod(G, d, q)$ is faithful and dc-irreducible with $d < n(n-5)/2$, then up to tensoring with the signature, $V$ is standard (i.e.~the structure of $V$ is as in the  \hyperlink{h:theorem}{Theorem}) with $(n-2)\dim L \le d\le  (n-1)\dim L$ and intermediate values are possible only when $q=0$.
\end{problem*}

As should be clear, we are operating under the conjectural principle that although our context is quite general, the minimal objects still fall into the familiar linear-algebraic setting, a principle well-aligned with the recent work of Borovik \cite{BoA20}. We quite believe in this and would thus love to see a counter-example to shatter our illusion.

The problem of determining the minimal dimension of a group carrying a faithful action of $\Sym(n)$ or $\Alt(n)$ seems both interesting and relevant in the nonabelian case as well. With additional definability/compatibility hypotheses, the soluble case can easily be controlled. For nonsoluble groups, we propose the following crude bound, which is likely far from optimal.

\begin{problem*}
Let $H$ be a nonsoluble group on which $\Alt(n)$ acts faithfully and definably by automorphisms. Suppose there is a nonabelian notion of dimension, say Morley rank, making $H$ dim-connected. Show $\dim H \ge n$ for sufficiently large $n$.
\end{problem*}

Notice that low values of $n$ will complicate the picture even for $\Sym(n)$: for example, $\Sym(5)\simeq \PGL_2(5)$, which can be construed as $3$-dimensional.

For the present article we however stick to the abelian case. The proof of the \hyperlink{h:theorem}{Theorem} will be in \S~\ref{S:proof}; we first turn to the general setting.
\section{The context}\label{S:context}
We now give the  setting for our study of $\Sym(n)$- and $\Alt(n)$-modules. In addition to defining modules equipped with an additive dimension, we also present notions of connectedness, irreducibility, and the characteristic. In short, the goal of this section is to fully explain the phrase `let $V \in \Mod(G, d, q)$ be dc-irreducible'. 

The landscape will likely be both familiar and surprising to the reader versed in model theory. We seek a notion of dimension that encompasses simultaneously the linear dimension over $\bF_p$ for finite representations as well as model-theoretic dimensions (e.g.~Morley rank) for  infinite representations. The context we present is extremely natural, yet looks new to us. (Unfortunately, somewhat conflicting terminology with Frank Wagner's recent `dimensional groups' was unavoidable~\cite{WaF20}.)

We first define modules (\S~\ref{s:universe}) with an additive dimension (\S~\ref{s:dimension}), and the notion of dim-connectedness (\S~\ref{s:dc}). We discuss the characteristic of a module (\S~\ref{s:characteristic}) and then introduce classes $\Mod(G, d, q)$ as well as the relevant notion of irreducibility (\S~\ref{s:dcirreducible}). The overview concludes with our key tool: an expected \hyperlink{h:clemma}{Coprimality Lemma} (\S~\ref{s:coprimality}).

\subsection{Modular universes and modules}\label{s:universe}

A balance is difficult to strike between categorical generality and model-theoretic care for elementary constructions.
We opted for a categorical vision but avoided any specialised language. We believe the categorist will instantly grasp the context, and the logician will readily check that it generalises definable universes. Do note that we use $\ker f$ and $\im f$ to refer to the kernel and image of $f$ in the \textit{algebraic} sense.

\begin{definition*}
A \emph{modular universe} is a subcategory $\cU$ of the category $\Ab$ of abelian groups satisfying the following closure properties.
\begin{itemize}
\item{}[\textsc{inverses}]
If $f \in \Ar(\cU)$ is an isomorphism, then $f^{-1} \in \Ar(\cU)$.
\item{}[\textsc{products}] If $V_1, V_2 \in \Ob(\cU)$ and $f_1, f_2 \in \Ar(\cU)$, then 
$V_1 \times V_2 \in \Ob(\cU)$, and 
	$\Ar(\cU)$ contains $f_1 \times f_2$,  the projections $\pi_i : V_1\times V_2 \rightarrow V_i$, and the diagonal embeddings $\Delta_k:V_1\rightarrow V_1^k$.
\item{}[\textsc{sections}] 
If $W \leq V$ are in $\Ob(\cU)$, then $V/W\in \Ob(\cU)$ and $\Ar(\cU)$ contains the inclusion $\iota:W\rightarrow V$ and quotient $p:V\rightarrow V/W$ maps.
\item{}[\textsc{kernels/images}] 
If $f:V_1 \rightarrow V_2$ is in $\Ar(\cU)$, then $\ker f, \im f \in \Ob(\cU)$, and for all $W_1,W_2 \in \Ob(\cU)$,
	\begin{itemize} 
	\item
	if $W_1 \leq \ker f$, the induced map $\overline{f}\colon V_1/W_1 \to V_2$ is in $\Ar(\cU)$;
	\item 
	if $\im f \le W_2 \le V_2$,  the induced map $\check{f}\colon V_1 \to W_2$ is in $\Ar(\cU)$.
	\end{itemize} 
\item{}[\textsc{module structure}]
If $V \in \Ob(\cU)$, then $\Ar(\cU)$ contains the sum  map $\sigma \colon V\times V \to V$ and the multiplication-by-$n$  maps $\mu_n\colon V \to V$.
\end{itemize}
The objects of a modular universe $\cU$ are called its \emph{modules} and the arrows its \emph{compatible morphisms}.
\end{definition*}

%

\begin{remarks}\label{r:catenarity}\leavevmode
\begin{itemize}
\item
We could not find an official categorical name for our setting. `Topologising, abelian subcategory' does not suffice; the axiom of inverses is of importance to us, but we won't go as far as assuming that the subcategory is replete.
\item
The axioms immediately allow for restrictions of compatible maps and the computing of inverse images. Indeed, if $f:V_1\rightarrow V_2$ is in $ \Ar(\cU)$ with $W_1 \le V_1$ in $\Ob(\cU)$, then the restriction of $f$ to $W_1$ is $f\circ \iota$ for $\iota :W_1 \rightarrow V_1$ the inclusion. And for $W_2 \le V_2$  in $\Ob(\cU)$, $f^{-1}(W_2)$ is the kernel of $p\circ f$ for $p:V_2 \rightarrow V_2/W_2$ the quotient map.
\item
Model theorists might expect a `characterisation of arrows' which we do \emph{not} require: $f \in \Ar(\cU)$ if and only if its domain, image and graph are in $\Ob(\cU)$.
\end{itemize}
\end{remarks}

\begin{examples}\leavevmode
\begin{itemize}
\item
From algebra: the category of all abelian groups or of all abelian $p$-groups (equipped with all group morphisms) forms a modular universe; both are full subcategories of $\Ab$.
An important variation is the category of all abelian $p$-groups \emph{of finite Pr\"ufer $p$-rank}. (Such groups can contain only \emph{finite} powers of the quasi-cyclic group $C_{p^\infty}$; the maximal such power is called the $p$-rank.)

For a given ring $R$, the category of $R$-modules (equipped with all $R$-morphisms) is a modular universe.
\item
From Lie theory: the collection of all abelian Lie groups (with Lie morphisms) forms a modular universe after adjusting the notion of $\leq$ (which has to be closed).
\item
From universal algebra: the variety generated by a given abelian group $V$ (with group morphisms) forms a modular universe; this can be computed as $\HSP(V)$, which is the collection of all Homomorphic images of Subgroups of Products of $V$.
\item
From model theory: the abelian part of the `interpretable universe' (which we call definable) forms a modular universe; specifically, this is looking at the category of all abelian groups definable in some first-order theory equipped with the definable group morphisms.
\item
If $V$ is a module of a universe $\cU$, then there is a smallest subuniverse containing $V$. This is contained in both $\HSP(V)$ and the abelian-definable universe $\cU_{\rm{def}}$ from model theory. Notice that the intersection $\HSP_{\rm{def}}(V) = \HSP(V) \cap \cU_{\rm{def}}(V)$ might be substantially smaller than $\cU_{\rm{def}}$ if we add a highly extrinsic, secondary abelian structure on some non-definable subsets of $V$. (Typically, if $\bK$ is one of the pathological fields model theorists adore, then $\HSP_{\rm{def}}(\bK_+)$ will miss all the exotic structure.)
\end{itemize}
\end{examples}

Modular universes allow for a variety of additional constructions. We highlight several important ones after giving the relevant notion of a $G$-module for our setting. 

\begin{definition*}
Suppose $V$ is a module in the universe $\cU$. If a group $G$ acts on $V$ by compatible morphisms of $\cU$, we say that $V$ is a \emph{$G$-module} in $\cU$. 
\end{definition*}

In model-theoretic terms, we are assuming that $G$ acts by definable automorphisms, but we are \emph{not} assuming definability of the action, viz.~definability of the triple $(G, V, \cdot)$. For instance, $G$ itself need not be definable.


\begin{uproperties}
Let $\cU$ be a modular universe.
\begin{enumerate}[label=(\roman*)]
\item{}
\textsc{[meet/join]} If $V_1, V_2 \leq V$ are modules in $\cU$, then so are $V_1 \cap V_2$ and $V_1 + V_2$.
\item{}
\textsc{[extensions]} Suppose $V_1, V_2 \leq V$ are modules in $\cU$ with $V_1 + V_2 = V$. If $f_i \colon V_i \to W$ are compatible morphisms that agree on $V_1 \cap V_2$, then there is a compatible $f\colon V \to W$ extending both.
\item{}
\textsc{[permutation modules]} If $V$ is a module in $\cU$, then $\perm(n, V)$ is a $\Sym(n)$-module in $\cU$.
\item{}
\textsc{[enveloping algebras]} If $V$ is a $G$-module in $\cU$ for some group $G$, then each map in the subring of $\End(V)$ generated by $G$ is compatible. 
\end{enumerate}
\end{uproperties}
\begin{proof}
Assume $V_1, V_2 \leq V$ are modules in $\cU$. Let $\sigma: V\times V\rightarrow V$ be the addition map and $\Delta_k: V \rightarrow V^k$ the diagonal embedding.
\begin{enumerate}[label=(\roman*)]
\item 
Note that $V_1 \cap V_2 = \Delta_2^{-1}(V_1\times V_2)$ and $V_1 + V_2 = \sigma(V_1\times V_2)$. 
%
\item 
Let $g$ be the restriction of $\sigma$ to $V_1\times V_2$, and set $I := \{(a, -a): a \in V_1 \cap V_2\} = \ker g$.  Then the induced isomorphism $\overline{g}\colon V_1 \times V_2/I \rightarrow V$ is in $\Ar(\cU)$, so $\overline{g}^{-1}$ is as well. Define $h: V_1\times V_2 \rightarrow V$ via restriction of $\sigma \circ (f_1\times (f_2\circ\mu_{-1}))$ to $V_1\times V_2$, where $\mu_{-1}:V\rightarrow V$ is inversion; then $h$  computes $f_1 - f_2$. Since $I \le \ker h$, the induced map $\overline{h}\colon V_1 \times V_2/I \rightarrow V$ is in $\Ar(\cU)$, and $f:=\overline{h}\circ \overline{g}^{-1}\in \Ar(\cU)$ extends both $f_1$ and $f_2$.
\item
As a set, we may identify $\perm(n, V)$ with $V^n \in \Ob(\cU)$ and the canonical summands $Ve_i$ with $\bigcap_{j\neq i} \ker \pi_k$ (for $\pi_k:V^n \rightarrow V$ the $k^\text{th}$-projection). Now, for $\alpha \in \Sym(n)$, viewed as the automorphism of $V^n$ permuting the coordinates naturally, we must show $\alpha \in \Ar(\cU)$. Let $\Delta$ be the diagonal embedding of $V^n$ into $(V^n)^n$.  Then $\alpha = (\pi_{\alpha^{-1}(1)} \times \cdots\times\pi_{\alpha^{-1}(n)})\circ\Delta.$
\item 
The (extended) sum map $\sigma_k: V^k\rightarrow V$ may be inductively defined as $\sigma\circ(\sigma_{k-1} \times \Id_V)$ so is in $\Ar(\cU)$. Further, $\Ar(\cU)$ contains the multiplication-by-$n$ maps $\mu_n \colon V \to V$ for each $n \in \bZ$. Thus, for $g_1,\ldots g_k \in G$ and $n_1,\ldots,n_k \in \bZ$, the image of $\sum n_ig_i$ in $\End(V)$ is $ \sigma_k\circ [(\mu_{n_1}\circ g_1)\times \cdots \times (\mu_{n_k}\circ g_k)] \circ \Delta_k \in \Ar(\cU)$.
\qedhere
\end{enumerate}
\end{proof}

\subsection{Additive dimensions}\label{s:dimension}

\begin{definition*}
Let $\cU$ be a modular universe.
An \emph{additive dimension} on $\cU$ is a function $\dim\colon \Ob(\cU) \to \bN$ such that for all $f\colon V \to W$ in $\Ar(\cU)$, 
\[\dim V = \dim \ker f + \dim \im f.\]
This property will be called \emph{additivity}.
\end{definition*}

\begin{examples}
Each of the following has an additive dimension:
\begin{itemize}


\item
the universe of finite-dimensional vector spaces over a fixed field, equipped with the linear dimension;
\item
the universe of abelian $p$-groups of finite Pr\"ufer $p$-rank, with dimension the $p$-rank;
\item
the universe of all abelian groups definable in a theory of finite Morley rank, with dimension the Morley rank;
\item 
the universe of all abelian groups definable in an $o$-minimal structure, equipped with $o$-minimal dimension;
\item
the universe of abelian Lie groups, equipped with Lie (manifold) dimension.
\end{itemize}
\end{examples}

\begin{remarks}
The following remarks are better understood in relation to say, behaviour of Morley rank, but can be read independently.
\begin{itemize}
\item
No assumption is made on when the dimension increases nor how; as a matter of fact, any multiple of a dimension function is again one.
\item
There need not be a descending chain condition (`\textsc{dcc}') on objects. This calls for a modified notion of connectedness in \S~\ref{s:dc}.
\item
No relationship between finiteness and $0$-dimensionality is implied. Thus, we handle in the same operational setting $\bF_p^n$ ($n$-dimensional, in finite group theory) and $\bZ$ ($0$-dimensional, in Lie theory).
\item
It is unclear whether the non-abelian case is restrictive enough for a general theory to emerge. Typically, handling commutators requires the dimension to be defined on subsets, not only subgroups. 

For example, we do not know about free groups or Tarski monsters; the question seems to be of interest but beyond our expertise.
\item
We briefly mused on the possibility of determining natural criteria for a pre-dimension---defined only on submodules of some $V$---to extend to a genuine dimension on subquotients as well, but this remains mostly unexplored. Such topics are nontrivial in model theory.
\end{itemize}
\end{remarks}

%
%

\begin{remarks}[for model theorists]
Our work stems from model theory but we wish to stress a couple of differences.
\begin{itemize}
\item
The focus is on \emph{group} subquotients instead of general definable sets. (One could define $\dim$ on cosets but we will not need that.)
\item
We work with a dimension on a  universe containing a \emph{fixed} group structure and make no demands on its behaviour in elementary extensions.
Thus dimension is not required to be a `strong' invariant (viz.~a property of the theory); as a matter of fact, an elementary extension need not bear a dimension function.

Morley rank and $o$-minimal dimension are strong invariants.
We do not have an example of (the definable universe of) an abelian group $V$ carrying an additive dimension and an elementary extension $V^\ast$ not admitting one.
%

Arguably we touch here the difference between first-order (definability in one structure) and model-theoretic (definability in family, viz.~in elementary extensions) properties; or between model-theoretic algebra and model-theory properly speaking.
\end{itemize}
\end{remarks}

\begin{dproperties}
Let $(\cU, \dim)$ be a modular universe with an additive dimension, from which we take modules.
\begin{enumerate}[label=(\roman*)]
\item
$\dim \{0_V\} = 0$.
\item
If $W\leq V$, then $\dim W \leq \dim V$.
\item
$\dim (V_1 \times V_2) = \dim V_1 + \dim V_2$.
\item
If $V_1, V_2 \leq V$, then $\dim (V_1 + V_2) = \dim V_1 + \dim V_2 - \dim (V_1\cap V_2)$.
\end{enumerate}
\end{dproperties}
\begin{proof}
For the first, apply additivity to the identity map $\iota:V\rightarrow V$.
The second follows from additivity of the quotient map $p:V\rightarrow V/W$ (and that  $\dim$ is non-negative). For the third, additivity of the projection $\pi_1 : V_1 \times V_2 \rightarrow V_1$ shows $\dim(V_1 \times V_2) = \dim V_1 + \dim(\ker \pi_1)$, so the result then follows from the fact that $\{0_{V_1}\}\times V_2$ and $V_2$ are compatibly isomorphic. For the final point, apply additivity to the restriction of $\sigma$ to $V_1\times V_2$ (using that the kernel is compatibly isomorphic to $V_1\cap V_2$).
\end{proof}

\begin{definition*}
If $(\cU, \dim)$ is a modular universe with an additive dimension and $V \in \Ob(\cU)$, we call $V$ \emph{a module with an additive dimension}, leaving $\cU$ implicit from context. 
\end{definition*}

In practice, we often have in mind $\HSP_{\rm def}(V)$ (viz.~those homomorphic images of submodules of powers of $V$ which are model-theoretically definable) when we say $V$ is a module with an additive dimension.

\subsection{dim-connectedness}\label{s:dc}
We now present a weak form of connectedness, which is \emph{not} the classical one in mathematical logic.

\begin{definition*}
A module $V$ with an additive dimension is called \emph{dim-connected} (for dimension-connected), or simply \emph{a dc-module}, if every proper submodule $W<V$ in $\cU$ satisfies $\dim W < \dim V$.
\end{definition*}

\begin{examples}\leavevmode
\begin{itemize}
\item
The only dc-module of dimension $0$ is $\{0\}$.
\item
Every finite-dimensional vector space over $\bF_p$ is dim-connected with respect to the linear dimension.
\item
If the dimension function satisfies `$\dim A = 0 \iff A$ is finite', then $V$ is dim-connected if and only if $V$ is connected in the usual model-theoretic sense of having no proper subobjects of finite index. But this is not so in general as we assume neither implication.
%
\end{itemize}
\end{examples}

\begin{remarks}\leavevmode
\begin{itemize}
\item
If $V$ contains a dc-submodule $V^1 \leq V$ with $\dim V^1 = \dim V$, then $V^1$ is unique (hence invariant under $\cU$-automorphisms of $V$) and called 
the \emph{dc-component} of $V$.
\item
If $W \leq V$ and both have dc-components, then $W^1 \leq V^1$.
Indeed, additivity implies $\dim ((W^1 + V^1)/V^1) = 0$, so $\dim (W^1/W^1\cap V^1) = 0$ also. Then dim-connectedness of $W^1$ forces $W^1 = W^1\cap V^1$.
\item
If $\cU$ satisfies the descending chain condition on objects, then dc-components exist. The converse is false ($0$-dimensional $\bZ$).
\item
In the universe of divisible abelian $p$-groups of finite Pr\"ufer rank, the dc-component of $V$ is the largest subtorus entirely contained in $V$.
\item
The dc-component of a group of finite Morley rank or an $o$-minimal group is its connected component.
\item
In general, even if dc-components exist, the index $[V:V^1]$ need not be finite ($0$-dimensional $V$).
\end{itemize}
\end{remarks}

We now highlight various operations preserving dim-connectedness.

\begin{cproperties}
Let $(\cU, \dim)$ be a modular universe with an additive dimension, from which we take modules.
\begin{enumerate}[label=(\roman*)]
\item
If $V$ is dim-connected and $f\colon V_1 \to V_2$ is compatible, then $\im f$ is dim-connected.
\item
If $V_1$ and $V_2$ are dim-connected, then so is $V_1 \times V_2$.
\item\label{cproperties.SumOfConnected}
If $V_1, V_2 \leq V$ and $V_1$ and $V_2$ are dim-connected, then so is $V_1+V_2$.
\end{enumerate}
\end{cproperties}
\begin{proof}\leavevmode
\begin{enumerate}[label=(\roman*)]
\item
Let $W_2 \leq \im f$ be a submodule of maximal dimension. Set $W_1 = f^{-1} (W_2) \geq \ker f$, and let $g\colon W_1 \to W_2$ be the restriction of $f$. Then $\dim W_1 = \dim \ker g + \dim \im g = \dim \ker f + \dim \im f = \dim V_1$. By dim-connectedness, $W_1 = V_1$, so $W_2 = \im f$.
\item
Suppose $Z \leq V_1 \times V_2$ is a submodule of maximal dimension. Let $\pi_i: V \rightarrow V_i$ be the projections and $\pi^Z_i$ the restrictions to $Z$.
Notice that $\ker \pi_i$ is compatibly isomorphic with $V_j$, hence dim-connected of the same dimension; of course $\ker \pi_i^Z \leq \ker \pi_i$.
Then $\dim V_1 + \dim V_2 = \dim (V_1 \times V_2) = \dim Z = \dim \im \pi^Z_i + \dim \ker \pi^Z_i \le \dim \im \pi_i + \dim \ker \pi_i = \dim V_1 + \dim V_2$. By dim-connectedness, one finds $\ker \pi^Z_1 = \{0\}\times V_2 \leq Z$ and $V_1 \times \{0\}\leq Z$, so $Z \geq (V_1 \times \{0\})\cdot(\{0\}\times V_2) = V_1\times V_2$.
%
\item
Apply the first two points to the sum map $V_1 \times V_2 \to V_1 + V_2$.
\qedhere
\end{enumerate}
\end{proof}

\subsection{The characteristic of a module}\label{s:characteristic}



\begin{definition*}
Let $V$ be a module with an additive dimension.
We say that:
\begin{itemize}
\item
$V$ has \emph{characteristic $p$}, for $p$ a prime, if it is of exponent $p$;
\item
$V$ has \emph{characteristic $0$} if it is divisible.
\end{itemize}
\end{definition*}

\begin{remarks}\leavevmode
\begin{itemize}
\item
A module need not have a well-defined characteristic: consider $\bZ/6\bZ$.
\item
Modules of characteristic $0$ may well contain torsion: consider $C_{p^\infty}$.
\end{itemize}
\end{remarks}

\begin{chlemma}
Let $V$ be a module with an additive dimension. If $V$ is dim-connected, then $V$ has a finite-length, $\Aut_\cU(V)$-invariant, dim-connected composition series $0 = V_0 < \dots < V_n = V$ in $\cU$ with $n \leq \dim V$ and each factor either of prime exponent or divisible.
\end{chlemma}
\begin{proof}
We proceed by induction on $\dim V$; the only dc-module of dimension $0$ is $\{0\}$. Let $p$ be a prime; consider the multiplication by $p$ morphism. The image $pV \leq V$ is dim-connected, so either $pV < V$ and we apply induction, or $p V = V$. In the latter case, $V$ is $p$-divisible, and we resume with another prime.
\end{proof}

\begin{examples}\leavevmode
\begin{itemize}
\item
Not all interesting modules have a characteristic. Any embedding $\Alt(4) \hookrightarrow \GL_2(\bZ/4\bZ)$ makes $V = (\bZ/4\bZ)^2$ a faithful, $2$-dimensional $\Alt(4)$-module of exponent $4$. It is not minimal, but $\Alt(4)$ is faithful on neither $2V$ nor $V/2V$.
(This of course relates to solubility; one should always be careful with $\rstd(4, 4)$.)
\item
Let $I$ be infinite and $T = \bigoplus_I C_{p^\infty}$ as a pure group. We suspect there is a non-trivial additive dimension on  $\HSP_{\rm def}(T)$; however Morley rank is infinite. Hence our setting seems to allow for tori of infinite Pr\"ufer rank.
%
\end{itemize}
\end{examples}

\begin{divproperties}
If $V$ is a dc-module with an additive dimension and $p$ is a prime, then $V$ is $p$-divisible iff $\Omega_p(V) := \{v \in V: pv = 0\}$ has dimension $0$. In particular, if $W \leq V$ is a dc-submodule and $V$ has a characteristic, then $V$ and $W$ have the same characteristic.
\end{divproperties}
\begin{proof}
The multiplication by $p$ morphism has kernel $\Omega_p(V)$; since $V$ is dim-connected, the map is onto if and only if the kernel has dimension $0$. When restricted to $W$, the kernel is $\Omega_p(W) = W \cap \Omega_p(V)$.
\end{proof}

\begin{remark}
In the case where $V = (C_{p^\infty})^n$ (which has characteristic $0$) with $p > 2$ a prime, it is well-known (for instance \cite{Dprank}) that the restriction morphism:
\[\rho : \Aut(V) \to \Aut(\Omega_p(V))\]
kills no element \emph{of finite order}. (There is a kernel $\{\pm 1\}^n$ if $p = 2$.)
We could however not make profit of this remark in our present work.
\end{remark}

\subsection{Dc-irreducibility and classes \texorpdfstring{$\Mod(G, d, q)$}{Mod(G,d,q)}}\label{s:dcirreducible}

\begin{definition*}
A dim-connected $G$-module $V$ is \emph{dc-irreducible} (as a $G$-module), if it has no non-trivial, proper, dim-connected $G$-submodule.
\end{definition*}

\begin{notation}
$\Mod(G, d)$ is the class of all dim-connected $G$-modules of dimension exactly $d$. (They are \emph{not} required to all live in a common universe; here one browses through universes and dimension functions.)
Subclasses $\Mod(G, d, q)$ specify the characteristic.
\end{notation}

By the \hyperlink{h:chlemma}{Characteristic Lemma} (\S~\ref{s:characteristic}), dc-irreducible modules always have a characteristic.

\subsection{Coprimality results}\label{s:coprimality}

We now let finite cyclic groups act on our modules; as one imagines, the characteristic plays a crucial role. Bear in mind that we do not assume the existence of dc-components for submodules (see remarks in \S~\ref{s:dc}); this explains why we avoid centralisers and prefer to work in terms of $\ad$ and $\tr$.

\begin{notation}
Let $V$ be a $\generated{g}$-module, where $g$ has order $p$.
\begin{itemize}
\item
Let $\ad_g = 1-g$ and $\tr_g = 1 + g + \dots + g^{p-1}$.
\item
Let $B_g = \im \ad_g$ and $C_g = \im \tr_g$.
\end{itemize}
\end{notation}

When $g$ acts compatibly on $V$ (in some universe), the \hyperlink{h:upropertiesis}{Universe Properties} ensure that $\ad_g$ and $\tr_g$ are compatible endomorphisms, so $B_g$ and $C_g$ are submodules. Be careful that $C_g$ does \emph{not} stand for the centraliser (though always $C_g\le C_V(g)$); for instance, if $g$ is an involution acting in characteristic $2$, then $\tr_g = \ad_g$ and $C_g = B_g$. However, in characteristic not $p$, pathologies are confined as shown below.

\begin{clemma}
Let $p$ be a prime and $V$ be a $p$-divisible, dim-connected $\generated{g}$-module with an additive dimension, where $g$ has order $p$.
Then $V = B_g + C_g$ and $\dim (B_g \cap C_g) = 0$.
\end{clemma}
\begin{proof}
As $B_g$ and $C_g$ are images under compatible endomorphisms of $V$, they are dc-submodules. Notice how $\ad_g \circ \tr_g = \tr_g \circ \ad_g = 1 - g^p = 0$ in $\End(V)$, so $B_g \leq \ker \tr_g$ and $C_g \leq \ker \ad_g$. However one sees $\ker \ad_g \cap \ker \tr_g \leq \Omega_p(V)$, which is $0$-dimensional by the \hyperlink{h:dpropertiesis}{Divisibility Properties}.
Thus,
\[\dim C_g = \dim \im \tr_g = \dim V - \dim \ker \tr_g \geq \dim \ker \ad_g \geq \dim C_g,\]
so equality holds. Then:
\[\dim (B_g + C_g) = \dim B_g + \dim C_g = \dim \im \ad_g + \dim \ker \ad_g = \dim V,\]
and $V = B_g + C_g$ by dim-connectedness.
\end{proof}

\begin{remarks}\leavevmode
\begin{itemize}
\item
The lemma proves that $B_g \cap C_g \le \Omega_p(V)$, so if $V$ has characteristic a prime different from $p$, then $B_g \cap C_g = 0$. But in characteristic $0$, $B_g \cap C_g$ need not be trivial, nor even finite.

 For instance let $T_1, T_2 \simeq C_{2^\infty}$ with central involutions $i_1, i_2$, and $\alpha$ an involution inverting $T_1$ while centralising $T_2$. Let $S = (T_1 \oplus T_2)/\generated{i_1 i_2}$. Notice how $B_\alpha = (1 - \alpha) S = \overline{T_1}$ and $C_\alpha = (1 + \alpha) S = \overline{T_2}$ intersect. Finally take an infinite direct sum of copies of $S$ and say it has dimension $1$.
\item
The proof also shows $C_g \leq \ker \ad_g = C_V(g)$ have the same dimension: hence $C_V(g)$ has a dc-component, which equals $C_g$. (Acting on a $p$-torus of infinite Pr\"ufer rank, one can produce examples with $[C_g: C_V(g)] = \infty$.) However, without assuming $p$-divisibility, $C_V(g)$ need no longer have a dc-component.
%
%
\end{itemize}
\end{remarks}

The conclusion of the \hyperlink{h:clemma}{Coprimality Lemma} is a `quasi-direct' decomposition for $V$. (This terminology has other meanings in the literature.)

\begin{definition*}
If $A_1,\ldots,A_n$ are submodules of a module $V$, the sum $\sum A_i$ is said to be \emph{quasi-direct} if $\dim \sum A_i = \sum \dim A_i$, in which case we write $\sum A_i = A_1\qoplus \cdots\qoplus A_n$.
\end{definition*}

\begin{remark}
Note that $\sum_1^n A_i$ is quasi-direct if and only if \emph{both}: $\sum_1^{n-1} A_i$ is quasi-direct, and $\dim \left(A_n \cap \sum_1^{n-1} A_i\right) = 0$.
\end{remark}

The following lemma highlights an important application of the \hyperlink{h:clemma}{Coprimality Lemma}---it will be used often in the sequel.

\begin{wlemma}
Let $E$ be a finite elementary abelian $2$-group and $V$ a $2$-divisible, dim-connected $E$-module with an additive dimension.
Then $V$ decomposes into a quasi-direct sum of dim-connected weight submodules $V_\lambda$ where $\lambda:V\rightarrow \{\pm1\}$ is a group morphism and each $e\in E$ acts on $V_\lambda$ as $\lambda(e)$. Each $\lambda$ is called a \emph{weight} of $E$ and $V_\lambda$ the corresponding \emph{(dim-connected) weight space}.
\end{wlemma}
\begin{proof}
Write $E = E_0 \oplus \langle e\rangle$. By the \hyperlink{h:clemma}{Coprimality Lemma}, $V = B_e \qoplus C_e$ with $e$ inverting the first factor and centralizing the latter. Applying induction to the action of $E_0$ on each of $B_e$ and $C_e$ then yields the desired result.
%
\end{proof}

\begin{remark}
If $V$ is a $2$-divisible, dim-connected $G$-module and $K\le G$ is a Klein four-subgroup whose nontrivial elements are conjugate in $G$, then the spaces attached to non-trivial weights have constant dimension $\ell$, and $\dim V = \dim C_V(K) + 3\ell$. This will be used repeatedly.
\end{remark}

\section{The proof}\label{S:proof}

The proof of the \hyperlink{h:theorem}{Theorem} will be assembled from three components: 
the \hyperlink{h:rlemma}{Recognition Lemma} (\S~\ref{s:rlemma}) provides an elementary geometric condition sufficient to identify the standard $\Sym(n)$-module (and its quotients in relevant characteristics);
the \hyperlink{h:elemma}{Extension Lemma} (\S~\ref{s:elemma}) uses an analogous geometric condition to  identify when an $\Alt(n)$-module extends to $\Sym(n)$ in such a way that \hyperlink{h:rlemma}{Recognition} applies; and 
the \hyperlink{h:glemma}{Geometrisation Lemma}  (\S~\ref{s:glemma}) details how $\Alt(n)$-modules of low dimension naturally possess the geometric condition needed to invoke \hyperlink{h:elemma}{Extension} (and thus \hyperlink{h:rlemma}{Recognition}).
Notably, the first two of these results are quite general with hypotheses only on the existence of a dimension function, and no restriction on its values.
\begin{notation}
\leavevmode
\begin{enumerate}[label=\arabic*.]
\item
\textit{Elements.} We reserve $i, j, k, \dots$ for elements of $\{1, \dots, n\}$. Permutations are typically denoted by lower-case greek letters, reserving:
\begin{itemize}
\item
$\tau, \tau'$ for transpositions;
\item
$\alpha, \beta$ for bi-transpositions;
\item
$\gamma, \delta$ for $3$-cycles.
\end{itemize}
We indicate that permutations $\sigma_1$ and $\sigma_2$ have disjoint supports $|\sigma_1|$ and $|\sigma_2|$ by writing $\sigma_1\perp\sigma_2$.
\item
\textit{Subgroups.} 
\begin{itemize}
\item
We avoid stabiliser notation $G_{i, j}$ and $G_{\{i, j\}}$. Instead, if $S = \Sym(n)$ and $I \subseteq \{1, \dots, n\}$, we let $S_I \leq S$ be the subgroup of permutations with support contained in $I$. For $\sigma \in S$, we let $S_\sigma = S_{|\sigma|}$ and $S_{\sigma^\perp} = S_{|\sigma|^c}$. Likewise in $\Alt(n)$.
\item
We often consider subgroups of $A = \Alt(n)$ isomorphic to $\Sym(k)$. Typically, for $|I|\leq n-2$ and symbols $k, \ell \notin I$, we let:
\begin{align*}
\Sigma_I^{(k\ell)} & = A_I \sqcup \left((S_I \setminus A_I) \cdot (k \ell)\right)\\
& = 
\left\{\begin{array}{ccc}\sigma & \text{ if } & \varepsilon(\sigma) = 1\\ \sigma(k \ell) & \text{ if } & \varepsilon(\sigma) = -1\end{array}: \sigma \in \Sym(I)\right\}
,\end{align*}
a subgroup of $\Alt(n)$ isomorphic to $\Sym(I)$.
When there is no ambiguity we simply write $\Sigma_I$.
\item
We use $K$ for Klein four-groups of bitranspositions, with $K_{ijk\ell}$ the Klein four-group having support $\{i,j,k,\ell\}$.
\end{itemize}
\end{enumerate}
\end{notation}

Also, recall:
\begin{itemize}
\item
from \S~\ref{s:dcirreducible}, that $\Mod(G, d, q)$ stands for the class of \emph{dim-connected} $G$-modules of dimension $d$ and characteristic $q$;
\item
from \S~\ref{s:dcirreducible} as well, the notion of \emph{dc-irreducibility} meaning irreducibility in the class of dim-connected $G$-modules;
\item
from \S~\ref{s:coprimality}, that for $V$ a $G$-module and $g \in G$, we define $B_g := [g, V] = \im \ad_g$ where $\ad_g = 1 - g \in \End(V)$.
\end{itemize}

\subsection{Recognising the standard module}\label{s:rlemma}

The \hyperlink{h:rlemma}{Recognition Lemma} constructs a natural covering module under assumptions of an elementary geometric nature. In prime characteristic (or in the torsion-free case), the kernel is known, and the isomorphism type fully determined.

Recall from \S~\ref{s:results} that $\ustd(n, \bZ) = \generated{f_1, \dots, f_{n-1}}$ is the $\Sym(n)$-submodule of the permutation module $\perm(n, \bZ) = \generated{e_1, \dots, e_n}$ generated by $f_i = e_i - e_n$. For any abelian group $L$, $\ustd(n, L) := \ustd(n, \bZ) \otimes_\bZ L$, and $\rstd(n, L) = \ustd(n, L)/C_{\ustd(n,L)}(\Sym(n))$. 

\begin{rlemma}
Let $n\ge 1$, $S := \Sym(n)$, and $V \in \Mod (S, d, q)$ be faithful and dc-irreducible. 
Suppose that for any transposition $\tau$, one has $[S'_{\tau^\perp}, B_\tau] = 0$.

Then for some abelian group $L$ and arrow $\varphi$ in $\cU$, there is a surjective morphism $\varphi\colon \ustd(n, L) \twoheadrightarrow V$ of $S$-modules.
Moreover:
\begin{itemize}
\item
if $0 \neq q \mid n$, then $\ker \varphi = C_{\ustd(n, L)}(S)$ and $V \simeq \rstd(n, L)$;
\item
if $0\neq q \nmid n$, then $\ker \varphi = 0$ and $V \simeq \rstd(n, L) \simeq \ustd(n, L)$;
\item
if $q = 0$, then $\ker \varphi = \generated{c}\otimes K$ where $c = \sum_{i = 1}^{n-1} f_i$ and $K \leq \Omega_n(L)$ is $0$-dimensional.
\end{itemize}
\end{rlemma}

\begin{remark}
In the first two cases, $V$ is completely reducible into a direct sum of isotypical summands $\rstd(n, \bF_q)$; when $V$ is torsion-free, the same is true with summands of the form $\ustd(n, \bQ)$. However, actions on tori could give rise to non-trivial quotients (in characteristic $0$).
\end{remark}

\begin{proof}
The case of $n \le 2$ is clear, so we suppose $n \geq 3$.
 By dc-irreducibility, $V = [S,V] = \sum_{i = 1}^{n-1} B_{(in)}$; this will be used several times below.

\begin{claim}[local equations]\label{l:recognition:cl:local}
Let $i \neq j$. Then $\ad_{(ij)} = 1 - (ij)$ acts as:
$\left\{\begin{array}{ll} 2 & \text{on $B_{(ij)}$}\\
(jk) & \text{on $B_{(ik)}$ for $k \notin \{i, j\}$}\\
0 & \text{on $B_{(k\ell)}$ for  $\{k, \ell\}\perp\{i, j\}$.}\end{array}\right.$
\end{claim}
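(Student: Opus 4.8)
For the other two lines the plan is to recast each as a vanishing statement in the subring of $\End(V)$ generated by $S$. Since $B_{(ik)} = \im\ad_{(ik)}$, the second line is exactly the operator identity $\ad_{(ij)}\ad_{(ik)} = (jk)\,\ad_{(ik)}$ on $V$; expanding the difference gives $\bigl(\ad_{(ij)} - (jk)\bigr)\ad_{(ik)} = A$, where $A := \sum_{g\in S_{\{i,j,k\}}} \varepsilon(g)\,g$ is the antisymmetriser over $S_{\{i,j,k\}}$. Likewise, since $B_{(k\ell)} = \im\ad_{(k\ell)}$, the third line is exactly $\ad_{(ij)}\ad_{(k\ell)} = 0$ on $V$. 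So it suffices to show that the (compatible) endomorphisms $A$ and $\ad_{(ij)}\ad_{(k\ell)}$ annihilate $V$.

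Both are handled by one device. Let $W$ denote the image of the endomorphism at hand; by the Connectedness Properties it is a dim-connected submodule of $V$. The point is that $W$ turns out to be a $\Sym(n)$-submodule on which $\Sym(n)$ acts through the signature $\varepsilon$; since $V$ is faithful and $\Alt(n)\neq 1$, dc-irreducibility then forces $W = 0$, which is what we want. To see this I would use the factorisation $A = \ad_{(ij)}\circ(1+\gamma+\gamma^2)$ with $\gamma = (ijk)$, together with its two cyclic relabellings, to get $W = \im A \subseteq B_{(ij)}\cap B_{(jk)}\cap B_{(ik)}$; and, since $\ad_{(ij)}$ and $\ad_{(k\ell)}$ commute (disjoint supports), $W = \im(\ad_{(ij)}\ad_{(k\ell)}) \subseteq B_{(ij)}\cap B_{(k\ell)}$. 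Now the hypothesis $[S'_{\tau^\perp}, B_\tau] = 0$ says that $W$ is centralised by each of $S'_{(ij)^\perp}, S'_{(jk)^\perp}, S'_{(ik)^\perp}$ (resp.\ by $S'_{(ij)^\perp}$ and $S'_{(k\ell)^\perp}$), and these alternating subgroups generate $\Alt(n)$ once $n$ is not too small; hence $\Alt(n)$ centralises $W$. Finally, $W\subseteq B_{(ij)}$ forces the transposition $(ij)$ to invert $W$, and since $\tau\,(ij)\in\Alt(n)$ for \emph{every} transposition $\tau$, every transposition inverts $W$; as transpositions generate $\Sym(n)$, the action on $W$ is indeed through $\varepsilon$, and in particular $W$ is $\Sym(n)$-stable.

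The real, if modest, obstacle is the group-theoretic bookkeeping: one has to check that the listed alternating subgroups genuinely generate $\Alt(n)$ for the relevant range of $n$. This is clear for $n\geq 5$; the cases $n\in\{3,4\}$ — where $\Alt(n-2)$ is trivial or cyclic and the hypothesis carries little information — have to be inspected separately and fall outside the range relevant to the \hyperlink{h:theorem}{Theorem}. It is worth stressing that the characteristic $q$ plays no role whatsoever here: the argument never divides by $2$, and $q$ only enters later, in locating $\ker\varphi$.

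Finally, I note that the first identity needs neither the hypothesis nor any restriction on $n$: from $\ad_\tau^2 = (1-\tau)^2 = 2(1-\tau) = 2\ad_\tau$ in $\End(V)$ one reads off that $\ad_\tau$ acts as multiplication by $2$ on $B_\tau = \im\ad_\tau$, and taking $\tau = (ij)$ gives it.
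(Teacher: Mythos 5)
Your proposal is correct for $n\ge 5$ (the range needed for the \hyperlink{h:theorem}{Theorem}), and it is a genuinely different organisation of the argument. You promote to centre stage the group-ring identities $\ad_{(ij)}\ad_{(ik)} - (jk)\ad_{(ik)} = \sum_{g\in S_{\{i,j,k\}}}\varepsilon(g)g$ and $\ad_{(ij)}\ad_{(k\ell)} = \ad_{(k\ell)}\ad_{(ij)}$ and run both remaining lines through a single vanishing principle: the image $W$ of the offending compatible endomorphism lands inside several $B_\tau$'s, the hypothesis then makes $\Alt(n)$ centralise $W$ while each $\tau$ inverts it, so $\Sym(n)$ acts on $W$ through $\varepsilon$ and dc-irreducibility plus faithfulness kill $W$. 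The paper proceeds differently for each line: for the middle case it works with $X = [1-(ij)-(jk)]B_{(ik)}$ (a one-sided relative of your antisymmetriser) inside $B_{(ij)}\cap B_{(jk)}$ and invokes $S$-invariance of that intersection; for the third case it introduces the auxiliary object $B_\tau^+ := \tr_{\tau'}(B_\tau)$ and the sum $V^+ = \sum B_{(ij)}^+$, and then has to split on the characteristic (invoking the \hyperlink{h:clemma}{Coprimality Lemma} when $q\neq 2$) and even handle a degenerate $n=4$ sub-configuration. Your route is noticeably more uniform and characteristic-free; what it costs is the group-theoretic check that the listed alternating point-stabilisers generate $\Alt(n)$ (routine for $n\ge 5$), and it sidesteps the $n\le 4$ regime entirely, which you correctly flag as outside the Theorem's scope. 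Note incidentally that the paper implicitly records your two operator identities in the \emph{Corollary} immediately following the \hyperlink{h:rlemma}{Recognition Lemma}, so your reading is very much in the paper's spirit.
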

\begin{proofclaim}
This is obvious on $B_{(ij)}$; of course $2 = 0$ is a possibility.

We turn to the middle case. Let $A:= S'$. First, note that $B_{(ij)}\cap B_{(jk)}$ is $S$-invariant. Indeed, it is  $S_{\{i,j,k\}}$-invariant as it is inverted by both $(ij)$ and $(jk)$, and by our main assumption, it is also centralised by $\langle A_{(ij)^\perp},A_{(jk)^\perp} \rangle = A_{j^\perp}$,  implying $S$-invariance.
Now let $a \in B_{(ik)}$, and write:
\begin{align*}
\underbrace{[1 - (ij)] a}_{\in B_{(ij)}} - \underbrace{(jk) a}_{\in B_{(ij)}} & = [1-(ij) - (jk)] a = - [1 - (ij) - (jk)] (ik) a\\
& = -\underbrace{[(ik) - (jk)(ik)] a}_{\in B_{(jk)}} + \underbrace{(ikj)a}_{\in B_{(jk)}}.
\end{align*}
Thus, $X := [1 - (ij) - (jk)]B_{(ik)}$ is a dim-connected subgroup of $B_{(ij)} \cap B_{(jk)}$. As noted above, $B_{(ij)} \cap B_{(jk)}$ is $S$-invariant, so $\sum_{\sigma\in S} \sigma X$ is a dim-connected and $S$-invariant subgroup of  $B_{(ij)} \cap B_{(jk)} < V$, hence trivial by dc-irreducibility. Hence $X = 0$, as desired.

It remains to verify the third equation, for which we may assume $n\geq 4$. As $[A_{\tau^\perp}, B_\tau] = 0$, each $\tau'\perp \tau$ gives rise to the same $B_\tau^+ := \tr_{\tau'} (B_\tau)$, which is centralised by $S_{\tau^\perp}$. We aim to show $B_\tau^+ = B_\tau$.

For $s \in S$, $s(B_{(ij)}^+) = B_{(s(i)s(j))}^+$, so
$V^+ := \sum_{i \neq j} B_{(ij)}^+$ is  $S$-invariant. Notice how $\ad_{(ij)}B_{(k\ell)}^+ = \ad_{(ij)}\tr_{(ij)}(B_{(k\ell)}) = 0$, and (by the second local equation)
\begin{align*}
\ad_{(ij)} (B_{(ik)}^+) & = (jk)B_{(ik)}^+ = (jk)\tr_{(j\ell)}(B_{(ik)}) = (jk)\tr_{(j\ell)}(jk)B_{(ij)}\\
&  = \tr_{(k\ell)}B_{(ij)} = B_{(ij)}^+.
\end{align*}
Thus, $\ad_{(ij)} V^+ = B_{(ij)}^+$.
By dc-irreducibility, $V^+$ equals  $0$ or $V$, so either $B_{(ij)}^+ = \ad_{(ij)} V^+ = 0$ or $B_{(ij)}^+ = \ad_{(ij)} V^+ = \ad_{(ij)} V = B_{(ij)}$. The latter  is our goal, so it remains to consider  $B_{(ij)}^+ = 0$. 

Assume  $B_{(ij)}^+ = 0$. If $q = 2$, we have the desired result since  $0 = B_{(ij)}^+ = \tr_{(k\ell)}(B_{(ij)})  = \ad_{(k\ell)}(B_{(ij)})$. If $q \neq 2$, then \hyperlink{h:clemma}{Coprimality} implies that $B_{(ij)} = B_{(k\ell)}$; conjugating, this quickly contradicts dc-irreducibility when $n\ge 5$. If $n=4$, then $V = B_{(12)} + B_{(23)} + B_{(34)}$; since$B_{(12)} = B_{(34)}$, one has $V = B_{(12)} + B_{(23)}$.  
Moreover, $V = B_{(12)} + B_{(23)}$ is a  $K$-invariant decomposition (for $K$ the Klein four-group) with the first factor centralised by $(12)(34)$ and the second by $(23)(14)$. A weight space decomposition of $V$ for $K$ must then result in the trivial weight space having positive dimension (by the \hyperlink{h:wlemma}{Weight Lemma}), against dc-irreducibility and faithfulness.
\end{proofclaim}


We construct a covering $S$-module as follows: 
\begin{itemize}
\item
let $L := B_{(1n)}$ as a trivial $S$-module;
\item
let $\hat{V} := \ustd(n, L)$ as an $S$-module, and define $f_i = e_i - e_n$ as usual;
\item
let $\varphi: \hat{V} \to V$ be the additive map such that $\varphi(f_i \otimes \ell) = (1i) \cdot \ell$, where $(11)$ is interpreted as the identity. 
\end{itemize}
Every element of $\hat{V}$ has a unique decomposition as $\sum_{i = 1}^{n-1} f_i\otimes \ell_i$ for $\ell_i\in L$, so $\varphi$ is indeed well-defined and  additive. Note that in the image of $\varphi$, we use the non-trivial action of $S$ on $B_{(1n)} \leq V$.

By the Universe Properties of \S~\ref{s:universe}, $\perm(n, L)$ is an $S$-module in $\cU$, so each $f_i \otimes L$, being the image of $\perm(n, L)$ under $\ad_{(in)}$, is also in $\cU$. Taking the sum, we find that $\hat{V}$ is an $S$-module in $\cU$. Now let $\varphi_i: f_i \otimes L\rightarrow V$ be $(1i)\circ\iota\circ\pi_i$ where $\iota:L\rightarrow V$ is the inclusion map and $\pi_i$ is the restriction to $f_i \otimes L$ of the $i^\text{th}$-projection of $\perm(n, L) = L^n$. Thus, each $\varphi_i$ is compatible so their common extension to $\hat{V}$ (which is $\varphi$) is as well.


\begin{claim}[covering]\label{l:recognition:cl:varphi}
$\varphi$ is a surjective morphism of $S$-modules.
\end{claim}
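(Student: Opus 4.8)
The plan is to prove the two assertions of the claim separately — first that $\varphi$ commutes with the $S$-action, then that it is onto — with essentially all of the work going into equivariance.

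For equivariance, I would begin by reducing the statement. The transpositions $(jn)$ with $1\le j\le n-1$ generate $S$ (since $(jk)=(jn)(kn)(jn)$), so once $\varphi\circ(jn)=(jn)\circ\varphi$ is known for each such $j$, expanding an arbitrary $\sigma\in S$ as a word in these transpositions and stripping them off one by one gives $\varphi(\sigma x)=\sigma\varphi(x)$ for every $x\in\hat V$. Moreover $\varphi$ is additive (established just before the claim) and each transposition acts additively on $\hat V=\ustd(n,L)$, so it is enough to check the identity on the additive generators $f_i\otimes\ell$ of $\hat V$, for $1\le i\le n-1$ and $\ell\in L=B_{(1n)}$.

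Next I would compute both sides on such a generator. Inside $\ustd(n,L)$ one has $\tau\cdot(f_i\otimes\ell)=(f_{\tau(i)}-f_{\tau(n)})\otimes\ell$, reading $f_n=0$; applying $\varphi$ (with the conventions that $\varphi$ kills $f_n\otimes\ell$ and that $(11)$ is the identity) gives $(1\,\tau(i))\cdot\ell-(1\,\tau(n))\cdot\ell$, which must be compared with $\tau\cdot\varphi(f_i\otimes\ell)=\tau(1i)\cdot\ell$. For $\tau=(jn)$ this boils down, in the few cases that arise according to whether $i=j$ and whether $i$ or $j$ equals $1$, to an identity asserting that a specific word in transpositions acts on $\ell\in B_{(1n)}$ in a prescribed way; and each such identity is exactly what the local equations of Claim~\ref{l:recognition:cl:local} supply — namely that $(1n)$ acts as $-1$ on $B_{(1n)}$, that $\ad_{(ab)}$ acts as $(bc)$ on $B_{(ac)}$ for $c\notin\{a,b\}$, and that $(ab)$ centralises $B_{(cd)}$ whenever $\{a,b\}\perp\{c,d\}$.

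Surjectivity is then a one-line consequence: conjugating $(1n)$ by $(1i)$ produces $(in)$, so $\varphi(f_i\otimes L)=(1i)\cdot B_{(1n)}=B_{(in)}$, and hence $\im\varphi\supseteq\sum_{i=1}^{n-1}B_{(in)}=[S,V]=V$ by dc-irreducibility, as already noted at the start of the proof. I expect the equivariance step to be the only real obstacle: the verification is entirely elementary, but it demands patient case bookkeeping, and it is precisely the point at which the local equations of Claim~\ref{l:recognition:cl:local} are indispensable — without them the ad hoc formula defining $\varphi$ has no reason to be $S$-linear.
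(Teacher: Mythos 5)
Your proposal is correct and follows essentially the same route as the paper: reduce equivariance to the generating transpositions $(jn)$ acting on the generators $f_i\otimes\ell$, split into cases by whether $i=j$ and whether $1\in\{i,j\}$, and close each case with the local equations of Claim~\ref{l:recognition:cl:local}; surjectivity via $\varphi(f_i\otimes L)=B_{(in)}$ and dc-irreducibility is identical. The only cosmetic difference is that the paper dispatches surjectivity before equivariance rather than after.
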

\begin{proofclaim}
Notice that $\varphi(f_i \otimes \ell) \in (1i) \cdot B_{(1n)} = B_{(in)}$. 
Therefore $\im \varphi \geq \sum_{i = 1}^{n-1} B_{(in)} = V$, so $\varphi$ is surjective. It remains to prove covariance. We use Claim~\ref{l:recognition:cl:local} freely.

Since $\{(jn): 1 \leq j \leq n-1\}$ generates $S$ and  $\{f_i \otimes \ell:1 \leq i \leq n-1,\ell \in L\}$ generates $\hat{V}$, it is enough to treat the  basic cases. If $i = j$, then
\[\varphi((in)\cdot (f_i \otimes \ell)) 
= - \varphi(f_i \otimes \ell) = (in) \cdot \varphi(f_i \otimes \ell).\]
If $i \neq j$, then
\[\varphi((jn)\cdot (f_i \otimes \ell)) = \varphi((f_i - f_j)\otimes \ell) = (1i) \ell - (1j)\ell.\]
To compute $(jn)\cdot\varphi(f_i \otimes \ell)$, we consider separately $j=1$ or not. If $j=1$,
\[(1n)(1i)\ell = (1i)(in)\ell = (1i)[(in) - 1 + 1]\ell = - (1i)^2 \ell + (1i)\ell,\]
completing this case. And if $j \neq 1$, 
\[(jn)(1i)\ell = (1i)(jn) \ell = (1i)[(jn)-1+1]\ell = -(1i)(1j)\ell + (1i)\ell,\]
which establishes this case since $(1j)\ell \in B_{(jn)}$ is centralised by $(1i)$.
\end{proofclaim}


\begin{claim}[kernel control]\label{l:recognition:cl:kervarphi}\leavevmode
\begin{itemize}
\item
If $0 \neq q \mid n$, then $\ker \varphi = C_{\hat{V}}(S)$ and $V \simeq \rstd(n, L)$;
\item
if $0\neq q \nmid n$, then $\ker \varphi = 0$ and $V \simeq \rstd(n, L) \simeq \ustd(n, L)$;
\item
if $q = 0$, then $\ker \varphi = \generated{c}\otimes K$ where $c = \sum_{i = 1}^{n-1} f_i$ and $K \leq \Omega_n(L)$ is $0$-dimensional.
\end{itemize}
\end{claim}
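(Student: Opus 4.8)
The plan is to pin down $\ker\varphi$ inside $\hat V=\ustd(n,L)$ with no assumption on $q$, and only then split into the three characteristic cases. Having reduced (as in the proof) to $n\ge 3$, I would take an arbitrary $x=\sum_{i=1}^{n-1}f_i\otimes\ell_i\in N:=\ker\varphi$ and use that $N$ is an $S$-submodule (by the Covering Claim). For each $k\le n-1$ one has $\ad_{(kn)}x\in N$, and a direct computation inside $\ustd(n,L)$ gives $\ad_{(kn)}x=f_k\otimes\bigl(\ell_k+\sum_{i=1}^{n-1}\ell_i\bigr)$, an element of $f_k\otimes L$. Since $\varphi$ sends $f_k\otimes\ell$ to $(1k)\cdot\ell$ (the identity if $k=1$) and $(1k)$ acts invertibly on $V$, the restriction of $\varphi$ to $f_k\otimes L$ is injective; hence $N\cap(f_k\otimes L)=0$ and $\ell_k=-\sum_i\ell_i$ for every $k$. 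So all the $\ell_k$ equal a common $m$, whence $m=\sum_i\ell_i=-(n-1)m$, i.e. $nm=0$, and $x=-\,c\otimes m$ with $m\in\Omega_n(L)$. Therefore $N=\langle c\rangle\otimes K$ for the subgroup $K:=\{m\in\Omega_n(L):c\otimes m\in N\}$, which is a module in $\cU$ as the preimage of $N$ under the compatible map $\ell\mapsto c\otimes\ell$. Note that $\langle c\rangle\otimes\Omega_n(L)=C_{\hat V}(S)$, so $N\le C_{\hat V}(S)$ in every case.

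The case $0\ne q\nmid n$ then falls out at once: $L$ has exponent $q$, so $\Omega_n(L)=0$, forcing $K=N=0$; since $C_{\hat V}(S)=\langle c\rangle\otimes\Omega_n(L)=0$ as well, $\varphi$ is an isomorphism and $V\simeq\ustd(n,L)=\rstd(n,L)$. The case $q=0$ is nearly as quick: $L$ is divisible, hence $p$-divisible for every prime $p$, so $\dim\Omega_p(L)=0$ by the Divisibility Properties, and an induction over the prime-power divisors of $n$ gives $\dim\Omega_n(L)=0$; thus $K\le\Omega_n(L)$ is $0$-dimensional and $\ker\varphi=\langle c\rangle\otimes K$ is exactly as claimed.

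The case $0\ne q\mid n$ is where the real work lies. Here $\Omega_n(L)=L$, so the first paragraph only gives $N\le\langle c\rangle\otimes L=C_{\hat V}(S)$, and it remains to prove the reverse inclusion, i.e. that $\varphi$ kills $C_{\hat V}(S)$. The idea is that $C_{\hat V}(S)=\langle c\rangle\otimes L$ is the image of $L$ under the compatible map $\ell\mapsto c\otimes\ell$, while $L=B_{(1n)}=\im\ad_{(1n)}$ is dim-connected, being the image of the compatible endomorphism $\ad_{(1n)}$ of the dim-connected module $V$ (Connectedness Properties). Hence $\varphi(C_{\hat V}(S))$ is a dim-connected $S$-submodule of $V$; since it is centralised by $S$ it cannot be all of $V$ (that would make $V$ trivial, contradicting faithfulness for $n\ge 2$), so dc-irreducibility forces $\varphi(C_{\hat V}(S))=0$. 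Together with the first paragraph this yields $\ker\varphi=C_{\hat V}(S)$, and then $V=\hat V/\ker\varphi=\rstd(n,L)$.

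The hard part is precisely this last inclusion. A priori $V$ could be a non-split extension of $\rstd(n,L)$ by a trivial module---just as $\ustd(n,L)$ itself is when $q\mid n$---and ruling this out is not a matter of faithfulness alone, but of faithfulness combined with dc-irreducibility applied to the \emph{dim-connected} central submodule $C_{\hat V}(S)$; the observation that $C_{\hat V}(S)$ is dim-connected because $L$ is, is the crux. Everything else is routine bookkeeping with $\Omega_n(L)$ and with the explicit $\Sym(n)$-action on $\ustd(n,L)$.
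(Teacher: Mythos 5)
Your proof is correct and the endgame (case split on $q$) is the paper's; but your opening step is a genuinely different, and arguably cleaner, computation. The paper first shows $\varphi^{-1}(C_V(S)) = C_{\hat V}(S)$ by working \emph{downstairs} in $V$: it decomposes $v\in C_V(S)$ as $\sum v_i$ with $v_i\in B_{(in)}$, applies $\ad_{(1i)}$, and invokes the local equations of Claim~\ref{l:recognition:cl:local} to deduce $v_i = (1i)v_1$, hence $v = \varphi(c\otimes v_1)$. You instead work \emph{upstairs} in $\hat V = \ustd(n,L)$, exploiting its explicit free structure: for $x=\sum f_i\otimes\ell_i\in\ker\varphi$ you compute $\ad_{(kn)}x = f_k\otimes(\ell_k+\sum_i\ell_i)$ and note that $\varphi$ is injective on each ``line'' $f_k\otimes L$ (because $(1k)\in\Aut(V)$), so $\ker\varphi$ meets each such line trivially. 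This avoids the local equations entirely and only uses the $S$-invariance of $\ker\varphi$. Both routes land on $\ker\varphi\le\langle c\rangle\otimes\Omega_n(L) = C_{\hat V}(S)$, after which the case analysis is identical; in particular your handling of the $q\mid n$ case (quotients of the dim-connected $L$ stay dim-connected, then dc-irreducibility forces $\varphi(C_{\hat V}(S))=0$) is exactly the paper's argument.

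One minor slip: from $\ell_k = -\sum_i\ell_i =: m$ for every $k$, you should get $m = -\sum_i\ell_i = -(n-1)m$, not ``$m = \sum_i\ell_i = -(n-1)m$''; the conclusion $nm=0$ is unaffected. Also worth flagging: the identity $\langle c\rangle\otimes\Omega_n(L) = C_{\hat V}(S)$ that you ``note'' needs the small computation $(1n)c = c - nf_1$, which is where the paper earns it; and your $q=0$ case quietly needs $L$ to be dim-connected and divisible to invoke the Divisibility Properties (both hold: $L = B_{(1n)}$ is the image of a compatible endomorphism of the dim-connected $V$, hence dim-connected, hence of the same characteristic).
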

\begin{proofclaim}
Let $c = \sum_{i = 1}^{n-1} f_i \in \ustd(n, \bZ)$. We first contend that:
\[\varphi^{-1}(C_V(S)) = C_{\hat{V}}(S) = \{c\otimes \ell: \ell \in L: n\ell  = 0\} = \generated{c}\underset{\bZ}{\otimes} \Omega_n(L) \simeq \Omega_n(L).\]
Notice at once that $c$ generates the centraliser of $\Sym(1, \dots, n-1)$, in symbols $C_{\ustd(n, \bZ)}(S_{n^\perp}) = \generated{c}$; the same holds in $\ustd(n, L)$, viz.~$C_{\hat{V}}(S_{n^\perp}) = \generated{c} \otimes L$.
Also, in $\ustd(n, \bZ)$ one sees $(1n)c = c - n f_1$, so  $C_{\hat{V}}(S) = \{c\otimes \ell: n \ell = 0\} = \generated{c} \otimes \Omega_n(L) \simeq \bZ \otimes_\bZ \Omega_n(L) \simeq \Omega_n(L)$. It remains to prove $\varphi^{-1}(C_V(S)) = C_{\hat{V}}(S)$. The latter is clearly contained in the former; we now show that $v \in C_V(S)$ is in the image of $C_{\hat{V}}(S)$. As noted after Claim~\ref{l:recognition:cl:local}, $v = \sum_{i = 1}^{n-1}v_i$ with $v_i \in B_{(in)}$. Applying $\ad_{(1i)}$ and using the local computations, we see that $(in)v_1 + (1n)v_i = 0$, implying that $v_i = (1i)v_1$. Thus, $v = \sum_{i = 1}^{n-1}(1i)v_1 = \varphi(c\otimes v_1)$.

We may now finish the proof. Clearly $\ker \varphi \leq \varphi^{-1} (C_V(S)) = C_{\hat{V}}(S)$.
\begin{itemize}
\item
First suppose $q \neq 0$ and $q \nmid n$. Then $\Omega_n(L)$ and $C_{\hat{V}}(S) \ge \ker \varphi$ are trivial, so $V \simeq \hat{V} = \ustd(n, L)$. (The same holds if $V$ is torsion-free.)
\item
Next suppose $q \neq 0$ and $q | n$. Then $L = \Omega_n(L)$ so $C_{\hat{V}}(S) = \generated{c}\otimes L \simeq L$; the image $\varphi(C_{\hat{V}}(S)) = C_V(S)$ is now a quotient module of $L$, hence dim-connected. By dc-irreducibility of $V$, $C_V(S)$ is trivial, so $C_{\hat{V}}(S) = \ker \varphi$. Hence $V \simeq \hat{V}/\ker \varphi = \rstd(n, L)$.
(The last is seen by recalling that $q$ is prime and $\rstd(n,\bF_q)$ is irreducible.)
\item
Finally suppose $q = 0$. Recall that $\Omega_n(V)$ is $0$-dimensional by the \hyperlink{h:divproperties}{Divisibility Properties}.
Subgroups of $\generated{c} \otimes \Omega_n(L) \simeq \Omega_n(L)$ are of the form $\generated{c}\otimes K$ for $K \leq \Omega_n(L)$, and $\ker \varphi$ is one such.\qedhere
\end{itemize}
\end{proofclaim}
This completes the proof of the \hyperlink{h:rlemma}{Recognition Lemma}.
\end{proof}\setcounter{claim}{0}

On can also rephrase Claim~\ref{l:recognition:cl:varphi} of the \hyperlink{h:rlemma}{Recognition Lemma} as follows, with no reference to dimensionality.

\begin{corollary*}
Let $V$ be an abelian group equipped with an irreducible, faithful action of $\Sym(n)$. Suppose that for any two distinct transpositions $\tau, \tau'$ one has $\sum_{g \in \generated{\tau, \tau'}} \varepsilon(g) g = 0$.
Then $V$ is a homomorphic image of some $\ustd(n, L)$.
\end{corollary*}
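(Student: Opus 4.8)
The plan is to re-run the argument of the Covering claim (Claim~\ref{l:recognition:cl:varphi}) from the proof of the \hyperlink{h:rlemma}{Recognition Lemma} almost verbatim. The key observation is that, under the present stronger hypothesis, the local equations of Claim~\ref{l:recognition:cl:local} become purely \emph{formal} identities in $\End(V)$, so that neither a dimension function nor dc-irreducibility is needed to establish them. The cases $n \le 2$ being clear (or of vacuous hypothesis), I would assume $n \ge 3$ and keep the notation $\ad_\tau = 1 - \tau$, $B_\tau = \im \ad_\tau$.

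The first step is the local equations: for distinct $i, j$ and $k \notin \{i, j\}$, the operator $\ad_{(ij)}$ acts as $(jk)$ on $B_{(ik)}$, as $0$ on $B_{(k\ell)}$ whenever $\{k,\ell\} \perp \{i,j\}$, and as $2$ on $B_{(ij)}$. The first of these follows at once from the identity $(\ad_{(ij)} - (jk))\,\ad_{(ik)} = \sum_{g \in \generated{(ij),(ik)}} \varepsilon(g)\, g$ in $\End(V)$, which is a one-line expansion using $\generated{(ij),(ik)} = \Sym\{i,j,k\}$: the hypothesis makes the right-hand side vanish, forcing $\ad_{(ij)}$ and $(jk)$ to agree on $\im \ad_{(ik)} = B_{(ik)}$. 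The second is entirely analogous, starting from $\ad_{(ij)}\,\ad_{(k\ell)} = \sum_{g \in \generated{(ij),(k\ell)}} \varepsilon(g)\, g = 0$; and the third simply reflects that $(ij)$ inverts $B_{(ij)}$. No irreducibility is invoked here.

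With the local equations in hand, I would set $L := B_{(1n)}$, to be viewed as a trivial $S$-module (the genuine $S$-action on $B_{(1n)} \le V$ enters only through the map below), form $\hat V := \ustd(n, L) = \bigoplus_{i=1}^{n-1} (f_i \otimes L)$ with its usual $\Sym(n)$-action, and define the additive map $\varphi\colon \hat V \to V$ by $\varphi(f_i \otimes \ell) = (1i) \cdot \ell$ (with $(11)$ read as the identity); this is well-defined because $\hat V$ is the direct sum of the $f_i \otimes L$. Equivariance of $\varphi$ is then checked exactly as in Claim~\ref{l:recognition:cl:varphi}: it is enough to test it on the generators $(jn)$ of $\Sym(n)$ and $f_i \otimes \ell$ of $\hat V$, and each case collapses --- through the local equations, used as ``$\ad_{(in)}\ell = (1i)\ell$ for $\ell \in B_{(1n)}$'' and ``$(1i)$ centralises $B_{(jn)}$ when $\{1,i\} \perp \{j,n\}$'' --- to an identity among the elements $(1i)\ell$. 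Finally, faithfulness and $n \ge 2$ force $B_{(1n)} \ne 0$, so $\im \varphi \supseteq \varphi(f_1 \otimes L) = L$ is a non-zero $S$-submodule of $V$, hence all of $V$ by irreducibility; thus $\varphi\colon \ustd(n, B_{(1n)}) \twoheadrightarrow V$ exhibits $V$ as the desired homomorphic image.

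The only step carrying real content is the derivation of the local equations, and within it the one thing needing care is the displayed group-ring identity (and its disjoint-support analogue) --- each a short computation in the group ring of a symmetric group of degree $3$, respectively of a Klein four-group. Everything downstream is the formal bookkeeping already performed in the \hyperlink{h:rlemma}{Recognition Lemma}, which is precisely the contrast with that lemma: there the much weaker hypothesis $[S'_{\tau^\perp}, B_\tau] = 0$ made the detour through dc-irreducibility unavoidable in order to reach Claim~\ref{l:recognition:cl:local}.
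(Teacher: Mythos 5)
Your proposal is correct and follows exactly the same route as the paper's proof: the hypothesis forces the two group-ring identities $(1-(ij))(1-(k\ell))=0$ and $(1-(ij))(1-(ik))=(jk)(1-(ik))$ in $\End(V)$, which are precisely the local equations of Claim~\ref{l:recognition:cl:local}, and then the covering construction of Claim~\ref{l:recognition:cl:varphi} goes through unchanged (with ordinary irreducibility replacing dc-irreducibility for surjectivity). You have simply made explicit the short expansion in the group ring and the surjectivity step that the paper leaves to the reader.
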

\begin{proof}
The `integration formula' implies $[1 - (ij)][1-(k\ell)] = 0$ (when $\tau\perp \tau'$) and $[1 - (ij)] [1 - (ik)] = (jk)[1 - (ik)]$ (when $\tau\not\perp \tau'$) in $\End(V)$. Hence $V$ satisfies the conclusion of Claim~\ref{l:recognition:cl:local}, which is enough to produce a covering map $\varphi\colon \ustd(n, \bZ) \otimes_\bZ B_{(1n)}\to V$.
\end{proof}

\subsection{Extending an \texorpdfstring{$\Alt(n)$}{Alt(n)}-module to \texorpdfstring{$\Sym(n)$}{Sym(n)}}\label{s:elemma}

We now turn to $\Alt(n)$-modules, giving a geometric condition (analogous to that for the \hyperlink{h:rlemma}{Recognition Lemma}) under which an $\Alt(n)$-module extends to a $\Sym(n)$-module subject to Recognition. But do note that the two lemmas, Recognition and Extension, are independent.

\begin{elemma}[cf.~{\cite{WaA76,WaA77}}]
Let $n\ge 7$, $A:= \Alt(n)$, and $V \in \Mod(A, d, q)$ be faithful and dc-irreducible.
Suppose that for any bi-transposition $\alpha$, one has $[A_{\alpha^\perp}, B_\alpha] = 0$.
Then:
\begin{itemize}
\item
if $q = 2$ there is a unique compatible action of $\Sym(n)$ extending the $\Alt(n)$-structure;
\item
if $q \neq 2$ there are exactly two such, obtained from each other by tensoring with the signature.
\end{itemize}
Moreover, up to tensoring with the signature, the extension satisfies that 
for any transposition $\tau \in S:=\Sym(n)$, one has $[S_{\tau^\perp}, B_\tau] = 0$.
\end{elemma}

\begin{remark}
If $n \geq 8$ the main assumption is equivalent to: for any $3$-cycle $\gamma$, one has $[A_{\gamma^\perp}, B_\gamma] = 0$.
The `$\gamma$'-version is however stronger if $n = 7$.
\end{remark}

\begin{proof}
The bulk of the proof is devoted to existence; uniqueness will result afterwards.
We aim to extend the action of $\Alt(n)$ to $\Sym(n)$; to that end, we first identify what should be $B_{(ij)} = [(ij),V]$, which (up to tensoring with the sign representation) should be thought of as a line that we will call  $L_{(ij)}$. 
In the standard module, $L_{(ij)}$ can be computed as $B_\alpha \cap B_\gamma$ using any bi-transposition $\alpha$ that swaps $i$ and $j$ and any $3$-cycle $\gamma$ satisfying $|\alpha|\cap |\gamma| = \{i, j\}$. (There are certainly other ways to isolate $B_{(ij)}$ such as by intersecting $B_\alpha$ and $B_\beta$ for $\alpha$ and $\beta$ distinct bi-transpositions that both swap $i$ and $j$; this was in fact our original point of view.)

\begin{claim}\label{elemma:cl:L}
Let $i \neq j$ be given. For distinct $a,b,k\notin \{i,j\}$, \[L_{(ij)}:= \im \ad_{(ij)(ab)} \circ \ad_{(ijk)} = (B_{(ij)(ab)} \cap B_{(ijk)})^1\] is nontrivial and independent of the choice of $a,b,k$. Also, $[A_{\{i, j\}^\perp},L_{(ij)}]=0$.
\end{claim}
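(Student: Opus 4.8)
The plan is to work entirely inside the standard-module calculus already established for the commutators $B_g = [g,V]$, but now for elements of $A = \Alt(n)$, and to isolate the ``line'' $L_{(ij)}$ by the two-fold intersection indicated in the statement. First I would fix $i\neq j$ and distinct $a,b,k \notin\{i,j\}$, and record the two relevant elements: the bi-transposition $\alpha := (ij)(ab)$ and the $3$-cycle $\gamma := (ijk)$. The key computational input is a set of ``local equations'' for how $\ad_\alpha$ and $\ad_\gamma$ act across the various $B_\beta$'s (for $\beta$ ranging over bi-transpositions), entirely analogous to Claim~\ref{l:recognition:cl:local} of the \hyperlink{h:rlemma}{Recognition Lemma}; these follow from the main hypothesis $[A_{\alpha^\perp}, B_\alpha]=0$ exactly as in the symmetric case, using that $V = [A,V] = \sum B_\beta$ by dc-irreducibility and that relevant intersections $B_\beta \cap B_{\beta'}$ turn out to be $A$-invariant, hence trivial.

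Next I would show that the image $\im(\ad_\alpha\circ\ad_\gamma)$ coincides with the dc-component $(B_\alpha \cap B_\gamma)^1$. One inclusion is routine: since $\ad_\gamma^3$ and $\ad_\alpha^2$ vanish on $B_\gamma$ and $B_\alpha$ respectively (in the relevant characteristic, arguing via \hyperlink{h:clemma}{Coprimality} when $q\nmid 6$), the image of the composite lands in $B_\alpha\cap B_\gamma$, and being a compatible image of $V$ it is dim-connected. For the reverse inclusion I would argue that $\ad_\alpha\circ\ad_\gamma$ acts invertibly (up to finite kernel) on $(B_\alpha\cap B_\gamma)^1$, again via a coprimality/eigenvalue bookkeeping: on the intersection, $\gamma$ and $\alpha$ act through their nontrivial behaviour, so $\ad_\gamma$ and $\ad_\alpha$ have $0$-dimensional kernels there; additivity then forces the image to have full dimension inside the dc-component. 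Non-triviality of $L_{(ij)}$ then follows: if it were trivial, one could assemble an $A$-invariant decomposition of $V$ contradicting dc-irreducibility and faithfulness (the same style of Klein-four/weight-space argument used at the end of Claim~\ref{l:recognition:cl:local}).

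For independence of the choice of $a,b,k$, I would compare two such lines, say $L$ built from $(a,b,k)$ and $L'$ from $(a',b',k')$, and show $L = L'$ by exhibiting an element of $A_{\{i,j\}^\perp}$ (or a short product of such) carrying the defining data of one to the other while fixing $i,j$; since $L_{(ij)}$ is, by construction, a join/intersection of $B$-groups attached to supports meeting $\{1,\dots,n\}$ in a controlled way, conjugation-invariance of the construction under $A_{\{i,j\}^\perp}$ gives both $L=L'$ and the final assertion $[A_{\{i,j\}^\perp}, L_{(ij)}]=0$ simultaneously. Concretely: any $\sigma \in A_{\{i,j\}^\perp}$ sends $B_{(ij)(ab)}$ to $B_{(ij)(\sigma a\,\sigma b)}$ and $B_{(ijk)}$ to $B_{(ij\,\sigma k)}$, so it permutes the family of candidate lines; since $n\ge 7$ leaves enough room, $A_{\{i,j\}^\perp}$ acts transitively on the admissible triples $(a,b,k)$, forcing all the lines to coincide and hence to be centralised.

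\textbf{Main obstacle.} The delicate point is the reverse inclusion $\im(\ad_\alpha\circ\ad_\gamma) \supseteq (B_\alpha\cap B_\gamma)^1$ together with non-triviality --- i.e.\ controlling $B_\alpha\cap B_\gamma$ without a descending chain condition and without access to genuine centralisers. I expect to spend the real effort on a careful coprimality analysis of the commuting-up-to-finite actions of $\alpha$ and $\gamma$ on the intersection (note $\langle\alpha,\gamma\rangle$ is not cyclic, so the \hyperlink{h:clemma}{Coprimality Lemma} must be applied in stages, first for $\gamma$ of order $3$ and then for the residual action of $\alpha$), and on ruling out the degenerate case $L_{(ij)}=0$ when $q\in\{2,3\}$, where the clean coprimality dichotomy fails and one must fall back on the \hyperlink{h:wlemma}{Weight Lemma} and the hypothesis $n\ge 7$.
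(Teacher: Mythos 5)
Your plan breaks down at the centralisation/independence step, which you try to extract from one another in a circle. You argue that since $A_{\{i,j\}^\perp}$ acts transitively on the admissible triples $(a,b,k)$ and conjugation permutes the corresponding candidate lines, ``all the lines must coincide and hence are centralised.'' Neither implication holds. Transitivity gives you an element $\sigma$ with $\sigma \cdot L^{(a,b,k)} = L^{(\sigma a, \sigma b, \sigma k)}$ — it does not give $L^{(a,b,k)} = L^{(\sigma a, \sigma b, \sigma k)}$, which is exactly the independence you are trying to prove, unless you already know the lines are $\sigma$-stable. And even if you grant independence, what follows is that $A_{\{i,j\}^\perp}$ \emph{normalises} $L_{(ij)}$; it does not follow that the action on $L_{(ij)}$ is trivial, i.e.\ that $[A_{\{i,j\}^\perp}, L_{(ij)}] = 0$. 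The logical order must be reversed: centralisation is proved first, directly, and independence is then a consequence.

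The key computation you are missing is the one driving centralisation. For a bi-transposition $\beta\perp\gamma$ commuting with $\gamma=(ijk)$, the Extension Lemma hypothesis gives
\[
\ad_\beta(B_\gamma) = \ad_\beta \circ \ad_\gamma (V) = \ad_\gamma \circ \ad_\beta (V) = \ad_\gamma(B_\beta) \leq [A_{\beta^\perp}, B_\beta] = 0,
\]
so $(A_{\gamma^\perp})'$ (generated by such $\beta$) centralises $B_\gamma$. Since $L_{(ij)}\leq B_\alpha\cap B_\gamma$, it is centralised by $A_{\alpha^\perp}$ (by hypothesis) and by $(A_{\gamma^\perp})'$ (just shown), hence by $\langle A_{\alpha^\perp}, (A_{\gamma^\perp})'\rangle = A_{\{i,j\}^\perp}$. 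Only now does $3$-transitivity of $A_{\{i,j\}^\perp}$ off $\{i,j\}$ give independence from $a,b,k$. Two minor points besides: you will not need a separate Weight Lemma fallback for $q\in\{2,3\}$ — the paper's two Coprimality arguments (one valid when $q\neq 2$, one when $q\neq 3$) overlap to cover every characteristic since $q$ cannot be both; and the paper's non-triviality argument is not a Klein-four/weight-space argument but the direct observation that if $\alpha$ centralised $B_\gamma$ then conjugating would force $C_V(\alpha)\supseteq [A_{\{a,b\}^\perp},V]+[A_{\{i,j\}^\perp},V]=[A,V]$, against faithfulness and dc-irreducibility.
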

\begin{proofclaim}
Consider $L_{(ij)}:= \ad_{(ij)(ab)} B_{(ijk)}$ for distinct $a,b,k\notin \{i,j\}$; set $\alpha := (ij)(ab)$ and $\gamma := (ijk)$.

We first prove centralisation by $A_{\{i, j\}^\perp}$ and independence from $a, b, k$. Note that the group $(A_{\gamma^\perp})'$ is generated by its bi-transpositions $\beta$ (when $n = 7$, this fails of $A_{\gamma^\perp}$ itself), which all commute with $\gamma$ and satisfy:
\[\ad_\beta (B_\gamma) = \ad_\beta \circ \ad_\gamma (V) = \ad_\gamma \circ \ad_\beta (V) = \ad_\gamma(B_\beta) \leq [A_{\beta^\perp}, B_\beta] = 0,\]
implying that $[(A_{\gamma^\perp})', B_\gamma] = 0$. Also notice that $\alpha$ inverts $\gamma$. In particular $\ad_\alpha$ leaves $\im \ad_\gamma$ invariant; hence $L_{(ij)}\le B_{\alpha} \cap B_{\gamma}$, so by assumption and what we just noted, $L_{(ij)}$ is centralised by $\langle A_{\alpha^\perp}, (A_{\gamma^\perp})'\rangle = A_{\{i,j\}^\perp}$ (even if $n=7$). And as $A_{\{i,j\}^\perp} \ge \Alt(5)$ is $3$-transitive off of $\{i,j\}$, we also find that $L_{(ij)}$ is independent of the choice of $a,b,k$. 

We now show $L_{(ij)}= (B_{\alpha} \cap B_{\gamma})^1$, which will follow readily from \hyperlink{h:clemma}{Coprimality} and dim-connectedness of $L_{(ij)} = \ad_\alpha \circ \ad_\gamma (V)$.
If $\charac V \neq 2$, then letting $\alpha$ act on $B_\gamma$ we find $B_{\gamma} = L_{(ij)} \qoplus \tr_{\alpha}(B_{\gamma})$. Now $B_{\alpha} \cap \tr_{\alpha}(B_{\gamma}) \le B_{\alpha} \cap C_{\alpha}$ has dimension $0$, so $\dim  (B_{\alpha} \cap B_{\gamma}) = \dim( B_{\alpha} \cap (L_{(ij)} \qoplus \tr_{\alpha}(B_{\gamma})) = \dim L_{(ij)} + \dim (B_{\alpha} \cap \tr_{\alpha}(B_{\gamma})) = \dim L_{(ij)}$.
And if $\charac V \neq 3$,  write $V = B_{\gamma} \qoplus C_{\gamma}$, an  $\alpha$-invariant decomposition. 
Applying $\ad_{\alpha}$, we find that $B_{\alpha} = L_{(ij)} + \ad_{\alpha}(C_{\gamma})$ with $B_{\gamma}\cap \ad_{\alpha}(C_{\gamma})$ being $0$-dimensional. As before, we find that $\dim(B_{\alpha} \cap B_{\gamma}) = \dim L_{(ij)}$.

It remains to show that $L_{(ij)}$ is nontrivial, which is equivalent to showing that $\alpha$ does not centralise $B_{\gamma}$. Suppose it does. Conjugating, $\alpha$ centralises $B_{(ij\ell)}$ for all $\ell\notin\{i,j,a,b\}$ and also $B_{(ab\ell)}$ for all $\ell\notin\{i,j,a,b\}$. Thus, $C_V(\alpha)$ contains both $[A_{\{a,b\}^\perp},V]$ and $[A_{\{ij\}^\perp},V]$, hence all of $[A,V]$. This  contradicts our assumptions of dc-irreducibility and faithfulness.
\end{proofclaim}

\begin{remark}
There is a counterexample to Claim~\ref{elemma:cl:L} when $n=6$. In the case of the adjoint representation of $\Alt(6)\simeq \PSL_2(\bF_9)$,  $[A_{\alpha^\perp},B_\alpha] = 0$ and $L_{(ij)}$ has positive dimension, but $L_{(ij)}$ is \emph{not} independent of the choice of $a,b,k$. 
\end{remark}

The next claim establishes various expected properties of the $L_{(ij)}$; recall that $L_{(ij)}$ is a proxy for $B_{(ij)}$.

\begin{claim}[Geometry of lines]\label{elemma:cl:LijProperties}\leavevmode
\begin{enumerate}
\item\label{i:linestotal}
$V = \sum_{i\neq j} L_{(ij)}$.
\item\label{i:disjointlines}
If $\{i, j\} \neq \{k, \ell\}$ are distinct pairs, then $L_{(ij)} \cap L_{(k\ell)} = 0$.
\item\label{i:localequations}
If $i, j, k, \ell,x$ are distinct symbols, then: 
\begin{itemize}
\item $\ad_{(ijk)}(L_{(ix)}) = L_{(ij)}$;
\item $\ad_{(ij)(k\ell)}(L_{(ix)}) = L_{(ij)}$.
\end{itemize}
\item\label{i:linegeometry} $L_{(ij)} \leq L_{(ik)} + L_{(jk)}$.
\end{enumerate}
\end{claim}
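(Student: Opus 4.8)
My plan is to establish the four points in the stated order, using throughout the centralisation $[A_{\{i,j\}^\perp},L_{(ij)}]=0$ and the presentation $L_{(ij)}=(B_{(ij)(ab)}\cap B_{(ijk)})^1$ furnished by Claim~\ref{elemma:cl:L}, the \hyperlink{h:clemma}{Coprimality Lemma}, and a handful of identities in $\Alt(n)$; since $n\ge 7$ there is always room for the auxiliary symbols introduced below. \emph{The first point} is immediate: $\sum_{i\neq j}L_{(ij)}$ is $A$-invariant because $\sigma L_{(ij)}=L_{(\sigma i,\,\sigma j)}$, it is dim-connected by the Connectedness Properties (a sum of dim-connected submodules is dim-connected), and it is non-trivial because each $L_{(ij)}$ is; so dc-irreducibility forces it to equal $V$.

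\emph{The second point} is the delicate one. I would first record that a bi-transposition $\alpha=(ij)(ab)$ with $a,b\notin\{i,j\}$ acts as $-1$ on $L_{(ij)}$: indeed $L_{(ij)}\le B_\alpha=\im\ad_\alpha$ by Claim~\ref{elemma:cl:L}, and $\alpha\,\ad_\alpha=-\ad_\alpha$. Now take $v\in L_{(ij)}\cap L_{(ik)}$ with $i,j,k$ distinct. Applying this to both factors, $v$ is inverted by $(ij)(ab)$ and by $(ik)(ab)$ for $a,b\notin\{i,j,k\}$, so $(ikj)\,v=(ij)(ab)\cdot(ik)(ab)\,v=v$; since also $v\in L_{(ij)}\le B_{(ijk)}=B_{(ikj)}\le\ker\tr_{(ikj)}$, we get $v\in\ker\ad_{(ikj)}\cap\ker\tr_{(ikj)}\le\Omega_3(V)$. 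Next, for $\ell\notin\{i,j,k\}$, $v$ is likewise inverted by $(ij)(k\ell)$ and by $(ik)(j\ell)$, and $(ij)(k\ell)\cdot(ik)(j\ell)=(i\ell)(jk)$, so $(i\ell)(jk)\,v=v$; as $(i\ell)(jk)$ carries $L_{(ij)}$ to $L_{(k\ell)}$, this gives $v\in L_{(k\ell)}$, i.e.\ $L_{(ij)}\cap L_{(ik)}\le L_{(ij)}\cap L_{(k\ell)}$. The latter intersection is centralised by $\langle A_{\{i,j\}^\perp},A_{\{k,\ell\}^\perp}\rangle$, which is all of $A$ for $n\ge 7$, so it sits in $C_V(A)$; but a bi-transposition acts trivially on $C_V(A)$ and as $-1$ on $L_{(ij)}$, whence $L_{(ij)}\cap L_{(ik)}\le\Omega_2(V)$ as well, so $L_{(ij)}\cap L_{(ik)}\le\Omega_2(V)\cap\Omega_3(V)=0$. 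Finally, for disjoint pairs $L_{(ij)}\cap L_{(k\ell)}\le C_V(A)$ is fixed by every element of $A$, so conjugating by $(ikj)$ shows it lies inside $L_{(ij)}\cap L_{(ik)}=0$.

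\emph{The third point} rests on the observation that for $v\in L_{(ix)}$ (hence $v$ centralised by $A_{\{i,x\}^\perp}$) and $a,b\notin\{i,j,k,x\}$, the element $(jk)(ab)\in A_{\{i,x\}^\perp}$ fixes $v$, whence
\[(ij)(ab)\,v=(ij)(ab)(jk)(ab)\,v=(ij)(jk)\,v=(ijk)\,v,\]
so that $\ad_{(ijk)}v=\ad_{(ij)(ab)}v$; the same trick yields $\ad_{(ij)(k\ell)}v=\ad_{(ij)(ab)}v=\ad_{(ijm)}v$ for all admissible auxiliary symbols. Hence $\ad_{(ijk)}(L_{(ix)})=\ad_{(ij)(ab)}(L_{(ix)})$ is a dim-connected submodule of $B_{(ijk)}\cap B_{(ij)(ab)}$, so of $L_{(ij)}=(B_{(ijk)}\cap B_{(ij)(ab)})^1$; likewise $\ad_{(ij)(k\ell)}(L_{(ix)})$ lies in $B_{(ijm)}\cap B_{(ij)(ab)}$, hence in $L_{(ij)}$. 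For the reverse inclusions, $\ad_{(ijk)}$ is injective on $L_{(ix)}$ — if $(ijk)v=v$ then $v=(ijk)v\in L_{(jx)}$, so $v\in L_{(ix)}\cap L_{(jx)}=0$ by the second point — hence $\dim\ad_{(ijk)}(L_{(ix)})=\dim L_{(ix)}=\dim L_{(ij)}$, the pairs $\{i,x\}$ and $\{i,j\}$ being $A$-conjugate; and a dim-connected submodule of full dimension in the dim-connected $L_{(ij)}$ is all of it (similarly for $(ij)(k\ell)$). \emph{The fourth point} then follows at once: choosing $m\notin\{i,j,k\}$, the third point gives $\ad_{(ijm)}(L_{(ik)})=L_{(ij)}$, so every $v\in L_{(ij)}$ equals $w-(ijm)w$ with $w\in L_{(ik)}$ and $(ijm)w\in(ijm)L_{(ik)}=L_{(jk)}$, i.e.\ $v\in L_{(ik)}+L_{(jk)}$.

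The arithmetic in $\Alt(n)$ is light, and the $\End(V)$-identities are short; the main obstacle is really the second point, where no direct dimension count settles the disjoint case, so one is forced through $C_V(A)$ and the coprimality $\Omega_2(V)\cap\Omega_3(V)=0$ — together with the routine but necessary check that generation facts such as $\langle A_{\{i,j\}^\perp},A_{\{k,\ell\}^\perp}\rangle=A$ already hold at the boundary value $n=7$ (and that all the chosen auxiliary symbols fit).
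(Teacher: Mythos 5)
Your proof is correct, and for parts~(1) and (3) the route is essentially the paper's (the paper decomposes $\ad_\gamma = \ad_\beta + \ad_\alpha \circ \beta$ with $v\in C_V(\beta)$, which is the same observation as your $(jk)(ab)$ acting trivially on $L_{(ix)}$).

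For part~(2), however, you take a genuinely different path: you first establish that a bi-transposition $(ij)(ab)$ inverts $L_{(ij)}$, prove the intersecting case directly (showing $L_{(ij)}\cap L_{(ik)}\le\Omega_3(V)$ by a different intersection than the paper's, then reducing to $L_{(ij)}\cap L_{(k\ell)}\le C_V(A)$ for the $\Omega_2$ part), and finally derive the disjoint case from the intersecting one by conjugating inside $C_V(A)$. The paper instead treats the disjoint case first, by exhibiting explicit containments $L_{(12)}\cap L_{(34)}\le B_{(125)}\cap C_V((125))$ and $\le B_{(12)(56)}\cap C_V((12)(56))$, and then reduces the intersecting case to concrete intersections with $C_V((123))$ and $C_V((13)(24))$. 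Both styles rely on $\Omega_2(V)\cap\Omega_3(V)=0$; yours is a little more structural (passing through $C_V(A)$ and the generation fact $\langle A_{\{i,j\}^\perp},A_{\{k,\ell\}^\perp}\rangle=A$), the paper's more computational. For part~(4) your argument is also different and, I think, cleaner: you simply unwind $L_{(ij)}=\ad_{(ijm)}(L_{(ik)})$ into $w-(ijm)w$ with the two terms landing in $L_{(ik)}$ and $L_{(jk)}$, whereas the paper reasons via the commutator $[H,B_{(123)}]$ for $H=\Sigma_{\{1,2,3\}}^{(45)}$, a route whose justification that $[H,B_{(123)}]=L_{(13)}+L_{(23)}$ is slightly less transparent. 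Your version buys directness; the paper's emphasises the role of the copy of $\Sym(3)$ inside $A$, a theme used elsewhere in the proofs. No gaps.
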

\begin{proofclaim}
Clearly $\sum_{i\neq j} L_{(ij)}$ is $A$-invariant so must be equal to $V$, establishing \eqref{i:linestotal}.

We now handle \eqref{i:disjointlines} and will prove that distinct lines are disjoint. We   
 first consider {disjoint} index sets; by conjugacy, we may take $i, j, k, \ell$ to be $1,2,3,4$. Using Claim~\ref{elemma:cl:L}, \[L_{(12)}\cap L_{(34)} \leq B_{(125)} \cap C_V((125)) \leq  \Omega_3(V),\] but on the other hand: \[L_{(12)}\cap L_{(34)} \leq B_{(12)(56)} \cap C_V((12)(56)) \leq  \Omega_2(V).\]
Thus, $L_{(12)} \cap L_{(34)} = 0$, so lines with \emph{disjoint} index sets are disjoint;
we turn to intersecting sets. Notice that $L_{(12)} \cap L_{(23)} \leq B_{(12)(45)} \cap B_{(23)(45)} \leq C_V((123))$. Hence:
\[L_{(12)} \cap L_{(23)} \leq B_{(123)} \cap C_V((123)) \leq \Omega_3(V).\]
On the other hand $L_{(12)} \cap L_{(23)} \leq B_{(12)(34)} \cap B_{(23)(14)} \leq C_V((13)(24))$, so:
\begin{align*}
L_{(12)} \cap L_{(23)} & \leq B_{(12)(34)} \cap \left[C_V((13)(24)) \cap C_V((123))\right] \\ & \leq B_{(12)(34)} \cap C_V((12)(34)) \leq \Omega_2(V).\end{align*}
Therefore $L_{(12)} \cap L_{(23)} = 0$; distinct lines are disjoint.


We now prove \eqref{i:localequations}. Here we take $i, j, k, x$ to be $1,2,3,4$. 
Let $\alpha = (12)(56)$, $\beta = (23)(56)$, and $\gamma = \alpha \beta = (123)$. We show $\ad_{\gamma}(L_{(14)}) = L_{(12)}$.
Observe:
\[\ad_{\gamma} = 1 - \alpha \beta = 1 - \beta + (1 - \alpha) \beta = \ad_\beta + \ad_\alpha \circ \beta. \qquad(\ast)\]
If $v \in L_{(14)}$, then $v \in C_V(\beta)$, so applying $(\ast)$ we find $\ad_{\gamma}(v) = \ad_\alpha(v) \in B_\alpha$. Thus $\ad_{\gamma}(L_{(14)}) \le (B_{\gamma} \cap B_{\alpha})^1 = L_{(12)}$, by Claim~\ref{elemma:cl:L}. And as $\gamma \cdot L_{(14)} = L_{(24)}$, we have $ L_{(14)}\cap C_V(\gamma) \le L_{(14)}\cap L_{(24)} = 0$ by \eqref{i:disjointlines}, 
 so $L_{(14)} \cap \ker \ad_{\gamma}$ is trivial. Thus,   $\dim \ad_{\gamma}(L_{(14)}) = \dim L_{(14)} = \dim L_{(12)}$, forcing $\ad_{\gamma}(L_{(14)}) = L_{(12)}$.  

For the second part of \eqref{i:localequations}, we keep $\alpha, \beta,\gamma$ as before and  show $\ad_{\beta} L_{(24)} = L_{(23)}$.  
Now, $\beta = \alpha\gamma$, so here $\ad_{\beta} = \ad_\gamma + \ad_\alpha \circ \gamma$. But $\ad_\alpha \circ \gamma (L_{(24)}) = \ad_\alpha (L_{(34)}) = 0$, so by our previous work $\ad_{\beta}(L_{(24)}) = \ad_\gamma(L_{(24)}) = L_{(23)}$.

For \eqref{i:linegeometry}, consider $H := \Sigma_{\{1,2,3\}}^{(45)} \simeq \Sym(3)$, and observe that  $[H,B_{(123)}] = \ad_{(13)(45)}(B_{(123)}) + \ad_{(23)(45)}(B_{(123)}) =L_{(13)} + L_{(23)}$. Of course,   $[H,B_{(123)}]$ also contains $\ad_{(12)(45)}(B_{(123)}) = L_{(12)}$.
%
\end{proofclaim}

For $I \subseteq \{1,\ldots,n\}$ with $|I| \ge 2$, define \[V_{I} := \sum_{(i,j)\subseteq I} L_{(ij)}. 
\]

\begin{claim}\label{elemma:cl:V}
Let $I,J \subseteq \{1,\ldots,n\}$ with $|I|,|J|\ge 2$. The following hold:
\begin{enumerate}
\item\label{i:VItotal} $V_{\{1, \dots, n\}} = V$;
\item\label{i:sumIJ} if $|I\cap J| \ge 1$, then $V_{I\cup J} = V_I + V_J$;
\item\label{i:LVI} if $|I| \le n-3$, then  $V_I \cap L_{(ab)}= 0$ for $a, b \notin I$;
\item\label{i:n-2lines} the sum $L_{(12)} + L_{(23)} + \dots + L_{(n-2, n-1)}$ is direct;
\item\label{i:AIperponVI}
$[A_{I^\perp}, V_I] = 0$;
\item\label{i:bracketAI}
$[A_I, V] = V_I$ provided $|I| \geq 3$.
\end{enumerate}
\end{claim}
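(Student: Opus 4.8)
The plan is to prove the six items essentially in the listed order, deriving \eqref{i:VItotal}, \eqref{i:sumIJ}, \eqref{i:AIperponVI}, \eqref{i:bracketAI} quickly from the two preceding claims and reserving the real work for \eqref{i:LVI} and \eqref{i:n-2lines}. Item \eqref{i:VItotal} is just $V_{\{1,\dots,n\}}=\sum_{i\ne j}L_{(ij)}=V$ from Claim~\ref{elemma:cl:LijProperties}\eqref{i:linestotal}. For \eqref{i:sumIJ} only $V_{I\cup J}\subseteq V_I+V_J$ needs an argument: a line $L_{(ab)}$ with $a\in I\setminus J$ and $b\in J\setminus I$ satisfies $L_{(ab)}\le L_{(ac)}+L_{(bc)}\subseteq V_I+V_J$ for any $c\in I\cap J$, by Claim~\ref{elemma:cl:LijProperties}\eqref{i:linegeometry}. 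For \eqref{i:AIperponVI}, each generating line $L_{(ij)}$ of $V_I$ (so $i,j\in I$) is centralised by $A_{\{i,j\}^\perp}\supseteq A_{I^\perp}$ by Claim~\ref{elemma:cl:L}, hence so is $V_I$. For \eqref{i:bracketAI} I would write $[A_I,V]=\sum_\gamma B_\gamma$ over the $3$-cycles $\gamma$ supported in $I$ and check $B_\gamma=\ad_\gamma\bigl(\sum_{a\ne b}L_{(ab)}\bigr)\subseteq V_{|\gamma|}$: the summands meeting $|\gamma|$ in $0$ symbols vanish by \eqref{i:AIperponVI}, those meeting it in $2$ remain inside $V_{|\gamma|}$ by $\gamma$-invariance, and those meeting it in $1$ land in $V_{|\gamma|}$ by the local equations of Claim~\ref{elemma:cl:LijProperties}\eqref{i:localequations} and their $\gamma$-conjugates; conversely, for $\{i,j\}\subseteq I$ with $|I|\ge 3$, choosing $k\in I\setminus\{i,j\}$ and $x\notin I$ gives $L_{(ij)}=\ad_{(ijk)}(L_{(ix)})\subseteq B_{(ijk)}\subseteq[A_I,V]$.

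The heart is \eqref{i:LVI}, whose proof is the blueprint for \eqref{i:n-2lines}. Given $v\in V_I\cap L_{(ab)}$ with $a,b\notin I$ and $|I|\le n-3$, pick $c\in I^c\setminus\{a,b\}$ (possible since $|I^c|\ge 3$); then $(abc)\in A_{I^\perp}$ centralises $v$ by \eqref{i:AIperponVI}, so $v=(abc)\cdot v\in (abc)\cdot L_{(ab)}=L_{(bc)}$, and hence $v\in L_{(ab)}\cap L_{(bc)}=0$ by Claim~\ref{elemma:cl:LijProperties}\eqref{i:disjointlines}. For \eqref{i:n-2lines} I would induct: repeated use of Claim~\ref{elemma:cl:LijProperties}\eqref{i:linegeometry} shows $L_{(12)}+\dots+L_{(k-1,k)}=V_{\{1,\dots,k\}}$, so it suffices to prove $V_{\{1,\dots,k\}}\cap L_{(k,k+1)}=0$ for $k=2,\dots,n-2$. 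For $k\le n-3$ the trick above applies verbatim with the $3$-cycle $\gamma=(k{+}1,a,b)$, $a,b\in\{k{+}2,\dots,n\}$: it fixes $\{1,\dots,k\}$ pointwise, hence centralises $V_{\{1,\dots,k\}}$, while $\gamma\cdot L_{(k,k+1)}=L_{(k,a)}$ is a distinct line. This already gives $L_{(12)}\oplus\dots\oplus L_{(n-3,n-2)}=V_{\{1,\dots,n-2\}}$.

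The one genuinely delicate case, and the step I expect to be the main obstacle, is the last line: $V_{\{1,\dots,n-2\}}\cap L_{(n-2,n-1)}=0$. Here $A_{\{1,\dots,n-2\}^\perp}$ is trivial, so no fixing $3$-cycle is available. My plan is to reduce to a smaller index set: since $L_{(n-2,n-1)}\le V_{\{2,\dots,n-1\}}$, it would suffice to show $V_{\{1,\dots,n-2\}}\cap V_{\{2,\dots,n-1\}}\subseteq V_{\{2,\dots,n-2\}}$, for then the intersection lies in $V_{\{2,\dots,n-2\}}\cap L_{(n-2,n-1)}$, and now $A_{\{2,\dots,n-2\}^\perp}$ contains a $3$-cycle on $\{1,n{-}1,n\}$ fixing $\{2,\dots,n-2\}$ pointwise, so the argument of \eqref{i:LVI} closes the case. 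Proving the containment $V_{\{1,\dots,n-2\}}\cap V_{\{2,\dots,n-1\}}\subseteq V_{\{2,\dots,n-2\}}$ is where the difficulty concentrates: I would attack it by letting the $3$-cycles of $A_{\{n-2,n-1\}^\perp}$---which centralise every $v\in L_{(n-2,n-1)}$---carry $V_{\{1,\dots,n-2\}}$ onto modules $V_{I'}$ with steadily smaller index sets until only $V_{\{n-2\}}=0$ is left, the subtle point being a monotonicity/intersection property of $I\mapsto V_I$ that is not automatic from the axioms.
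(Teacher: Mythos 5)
Your treatment of items \eqref{i:VItotal}, \eqref{i:sumIJ}, \eqref{i:AIperponVI}, \eqref{i:bracketAI} is correct, and for \eqref{i:LVI} you take a genuinely cleaner route than the paper: you pick the $3$-cycle $(abc)$ \emph{inside} $A_{I^\perp}$ and invoke \eqref{i:AIperponVI} to centralise $V_I$ directly, whereas the paper picks a $3$-cycle $\gamma = (iak)$ meeting $I$ and computes $\ad_\gamma$ on both $L_{(ab)}$ and $V_I$ via the local equations to show $\ad_\gamma(X)=0$. Both close with the same observation that $X$ is then $\gamma$-fixed yet carried into a different line. Your version is shorter and makes the role of \eqref{i:AIperponVI} transparent.

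The genuine gap is in \eqref{i:n-2lines}, at the last inductive step $V_{\{1,\dots,n-2\}}\cap L_{(n-2,n-1)}=0$, exactly where you flag uncertainty. Your proposed detour through $V_{\{1,\dots,n-2\}}\cap V_{\{2,\dots,n-1\}}\subseteq V_{\{2,\dots,n-2\}}$ is not a good bet: this monotonicity of $I\mapsto V_I$ is essentially the directness you are trying to establish and would be circular to assume. The fix is to give up on finding an element of $A_{\{1,\dots,n-2\}^\perp}$ (there is none of order $3$) and instead revert to the computational mechanism of the paper's proof of \eqref{i:LVI}, applied with a $3$-cycle that \emph{meets} $\{1,\dots,n-2\}$. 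Take $\gamma=(1,n-1,n)$ and $v\in V_{\{1,\dots,n-2\}}\cap L_{(n-2,n-1)}$. On the one hand $\ad_\gamma(V_{\{1,\dots,n-2\}})\le L_{(1,n-1)}$: lines $L_{(ij)}$ with $1\notin\{i,j\}$ are centralised by $\gamma$ (Claim~\ref{elemma:cl:L}), and the local equations send $L_{(1j)}$ to $L_{(1,n-1)}$ for $2\le j\le n-2$. On the other hand $\ad_\gamma(L_{(n-2,n-1)})=L_{(n-1,n)}$ by Claim~\ref{elemma:cl:LijProperties}\eqref{i:localequations} applied to $\gamma=(n-1,n,1)$ acting on $L_{(n-1,n-2)}$. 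Since $L_{(1,n-1)}\cap L_{(n-1,n)}=0$ by \eqref{i:disjointlines}, $\ad_\gamma(v)=0$, so $v=\gamma\cdot v\in L_{(n-2,n-1)}\cap\gamma L_{(n-2,n-1)}=L_{(n-2,n-1)}\cap L_{(n-2,n)}=0$. Note this is exactly the pattern the paper already used to prove \eqref{i:LVI}; with it in hand, your $k\le n-3$ steps and this one close the induction.
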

\begin{proofclaim}
Part \eqref{i:VItotal} is merely Claim~\ref{elemma:cl:LijProperties}\eqref{i:linestotal}, and \eqref{i:sumIJ} follows readily from Claim~\ref{elemma:cl:LijProperties}\eqref{i:linegeometry}. Parts  \eqref{i:AIperponVI} and \eqref{i:bracketAI} are also fairly immediate. Indeed,  $[A_{I^\perp}, V_I] = \sum_{(i, j)\subseteq I} [A_{I^\perp}, L_{(ij)}] = 0$ by Claim~\ref{elemma:cl:L}, and if $|I| \geq 3$, then
\[[A_I, V] = \sum_{|\gamma| \subseteq I} B_\gamma = \sum_{(i, j, k) \subseteq I} L_{(ij)} + L_{(jk)} = \sum_{(i, j)\subseteq I} L_{(ij)}.\]

For \eqref{i:LVI}, choose distinct $a,b,k \notin I$; set  $X := L_{(ab)}\cap V_I$. Let $i \in I \setminus \{a, b, k\}$ and  $\gamma := (iak)$. Then using all of Claim~\ref{elemma:cl:LijProperties} we find $\ad_\gamma(X) \leq L_{(ak)} \cap L_{(ia)} = 0$, so $X \leq L_{(ab)} \cap \gamma(L_{(ab)}) = L_{(ab)} \cap L_{(kb)} = 0$. This establishes \eqref{i:LVI}, and  \eqref{i:n-2lines} now follows readily by induction.
\end{proofclaim}

For fixed $i\neq j$, let 
\[E_{i, j}:=\{\alpha \mid \text{$\alpha$ is a bi-transposition exchanging $i$ and $j$}\}.\]

The final ingredient we need to define the action of a transposition $\tau$ is the hyperplane representing $C_V(\tau)$, which we now define as $H_{\tau}$. (Regarding our definition below, recall that $C_\alpha = \tr_\alpha V$, and though always contained in $C_V(\alpha)$, it may be significantly smaller.)

\begin{claim}\label{elemma:cl:H}
Let $H_{(ij)} := \sum_{\alpha \in E_{i, j}} C_\alpha$. 
For distinct $k,a,b \notin \{i, j\}$, we have $V = V_{\{i,j,k\}} + H_{(ij)}$ with $V_{\{i,j,k\}} \cap H_{(ij)} \leq C_V((ij)(ab))$.
Further, $V_{\{i, j\}^\perp}\le H_{(ij)}$.
\end{claim}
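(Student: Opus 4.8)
The plan is to treat the three assertions in turn. Conjugating in $\Alt(n)$, we may take $i=1$, $j=2$, $k=3$, and we fix a bi-transposition $\alpha:=(12)(ab)$ with $a,b\notin\{1,2,3\}$; write $H:=H_{(12)}$ and $V_3:=V_{\{1,2,3\}}$. I would begin with the last assertion $V_{\{1,2\}^\perp}\le H$, for which it suffices to put each line $L_{(cd)}$ with $c,d\notin\{1,2\}$ inside $H$. If $q\neq 2$, choose $\beta=(12)(ef)\in E_{1,2}$ with $\{e,f\}$ disjoint from $\{c,d\}$ (possible since $n\ge 7$); then $\beta\in A_{\{c,d\}^\perp}$ centralises $L_{(cd)}$ by Claim~\ref{elemma:cl:L}, and as $V$ is $2$-divisible the dc-submodule $L_{(cd)}$ lies in the dc-component $C_\beta$ of $C_V(\beta)$ (remark after the \hyperlink{h:clemma}{Coprimality Lemma}), so $L_{(cd)}\le C_\beta\le H$. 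If $q=2$, use instead $\beta=(12)(cd)$, for which $C_\beta=B_\beta$ contains $\ad_\beta(L_{(cx)})=L_{(cd)}$ for any $x\notin\{1,2,c,d\}$, by Claim~\ref{elemma:cl:LijProperties}\eqref{i:localequations}. The decomposition $V=V_3+H$ then follows from Claim~\ref{elemma:cl:LijProperties}: by \eqref{i:linestotal}, $V=\sum L_{(k\ell)}$; the lines inside $\{1,2,3\}$ lie in $V_3$, the lines avoiding $\{1,2\}$ lie in $H$ by the above, and for $\ell\ge 4$ one has $L_{(1\ell)}\le L_{(13)}+L_{(3\ell)}\le V_3+H$ and symmetrically $L_{(2\ell)}\le L_{(23)}+L_{(3\ell)}\le V_3+H$ by \eqref{i:linegeometry}.

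For the overlap $V_3\cap H\le C_V(\alpha)$, the strategy is to show that $\ad_\alpha$ annihilates $V_3\cap H$ (note that we cannot expect $V_3\cap H$ to be dim-connected, so a dimension count alone will not do). Since $\ad_\alpha(V_3\cap H)\le\ad_\alpha(V_3)\cap\ad_\alpha(H)$, it is enough to prove that (a) $\ad_\alpha(V_3)\le L_{(12)}$ and (b) $\ad_\alpha(H)\le L_{(ab)}$: then $\ad_\alpha(V_3\cap H)\le L_{(12)}\cap L_{(ab)}=0$ by Claim~\ref{elemma:cl:LijProperties}\eqref{i:disjointlines}, so $V_3\cap H\le\ker\ad_\alpha=C_V((12)(ab))$, as claimed. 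Assertion (a) is immediate: $\ad_\alpha(V_3)=\ad_\alpha(L_{(12)})+\ad_\alpha(L_{(13)})+\ad_\alpha(L_{(23)})$, where $\ad_\alpha(L_{(12)})\le L_{(12)}$ because $\alpha$ fixes the pair $\{1,2\}$ and so preserves $L_{(12)}$, while $\ad_\alpha(L_{(13)})=\ad_\alpha(L_{(23)})=L_{(12)}$ by Claim~\ref{elemma:cl:LijProperties}\eqref{i:localequations} (using $3\notin\{a,b\}$).

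For (b) --- the crux --- the key observation is that if $\beta\in E_{1,2}$ then $\beta$ fixes $C_\beta$ pointwise, so for $v\in C_\beta$ we get $\ad_\alpha(v)=v-\alpha v=v-(\alpha\beta)v=\ad_{\alpha\beta}(v)$, hence $\ad_\alpha(C_\beta)\le B_\alpha\cap B_{\alpha\beta}$. Writing $\beta=(12)(cd)$, the element $\alpha\beta=(ab)(cd)$ is trivial (when $\beta=\alpha$), a bi-transposition with four-element support $\{a,b,c,d\}$, or a $3$-cycle on a triple containing both $a$ and $b$; accordingly $B_{\alpha\beta}$ is $0$, $L_{(ab)}+L_{(cd)}$, or some $V_{\{a,b,x\}}$ (the last because $B_\gamma=[A_{|\gamma|},V]=V_{|\gamma|}$ for a $3$-cycle $\gamma$ by Claim~\ref{elemma:cl:V}\eqref{i:bracketAI}, as $A_{|\gamma|}=\langle\gamma\rangle$; the middle case by the reduction used for $B_\alpha$ just below). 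One also needs $B_\alpha\le L_{(12)}+L_{(ab)}$: writing $B_\alpha=\sum_{(k\ell)}\ad_\alpha(L_{(k\ell)})$, each summand is $0$ (when $\{k,\ell\}$ avoids $\{1,2,a,b\}$, via Claim~\ref{elemma:cl:L}), or contained in $L_{(12)}$ or $L_{(ab)}$ (when $\{k,\ell\}$ is $\{1,2\}$, $\{a,b\}$, or meets $\{1,2,a,b\}$ in one point, this last by Claim~\ref{elemma:cl:LijProperties}\eqref{i:localequations}), or --- when $\{k,\ell\}$ meets $\{1,2,a,b\}$ in two points --- reduces to the previous cases after splitting $L_{(k\ell)}$ via Claim~\ref{elemma:cl:LijProperties}\eqref{i:linegeometry}. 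In each of the three cases for $\alpha\beta$, intersecting $B_{\alpha\beta}$ with $L_{(12)}+L_{(ab)}$ and using Claim~\ref{elemma:cl:V}\eqref{i:LVI} (so that $L_{(12)}\cap V_{\{a,b,c,d\}}=0$ --- valid since $1,2\notin\{a,b,c,d\}$ and $|\{a,b,c,d\}|=4\le n-3$) together with Claim~\ref{elemma:cl:LijProperties}\eqref{i:disjointlines}, leaves $\ad_\alpha(C_\beta)\le L_{(ab)}$. Summing over $\beta\in E_{1,2}$ gives $\ad_\alpha(H)=\sum_\beta\ad_\alpha(C_\beta)\le L_{(ab)}$.

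I expect the main obstacle to be step (b). The identity $\ad_\alpha|_{C_\beta}=\ad_{\alpha\beta}|_{C_\beta}$ is short and conceptually clean, but exploiting it forces one to identify each $B_\gamma$ ($\gamma$ a bi-transposition or $3$-cycle) as a small sum of lines and to keep careful track of the various index-set configurations; it is exactly the configuration in which $\alpha\beta$ has support of size $4$ that makes the hypothesis $n\ge 7$ necessary, through Claim~\ref{elemma:cl:V}\eqref{i:LVI}. A secondary, minor point is the characteristic-$2$ case of the first assertion, where $C_\beta=B_\beta$ forces the different auxiliary choice of $\beta$ used above.
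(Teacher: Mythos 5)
Your proof is correct, and the first two assertions ($V_{\{i,j\}^\perp}\le H_{(ij)}$ and $V=V_{\{i,j,k\}}+H_{(ij)}$) are handled essentially as the paper does, but for the overlap $V_{\{i,j,k\}}\cap H_{(ij)}\le C_V(\alpha)$ you take a genuinely different and more computational route. Both arguments use $\ad_\alpha$ to kill the overlap, but the paper first proves the structural identity
\[
H_{(ij)}=C_\alpha+V_{\{i,j\}^\perp}\qquad\text{for any single }\alpha\in E_{i,j},
\]
by noting that any two elements of $E_{i,j}$ differ by an element of $A_{\{i,j\}^\perp}$, so $C_\beta\le C_\alpha+[A_{\{i,j\}^\perp},V]=C_\alpha+V_{\{i,j\}^\perp}$ via Claim~\ref{elemma:cl:V}\eqref{i:bracketAI}. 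This collapses $\ad_\alpha(H_{(ij)})$ in one line: after splitting $V_{\{i,j\}^\perp}=V_{\{i,j,a,b\}^\perp}+V_{\{k,a,b\}}$, everything except $\ad_\alpha(V_{\{k,a,b\}})$ dies because it lies in $C_V(\alpha)$, giving $\ad_\alpha(H_{(ij)})\le V_{\{k,a,b\}}$, which meets $L_{(ij)}$ trivially by Claim~\ref{elemma:cl:V}\eqref{i:LVI}. You instead estimate $\ad_\alpha(C_\beta)\le B_\alpha\cap B_{\alpha\beta}$ for each $\beta$ separately (a clean observation, since $\beta$ fixes $C_\beta$ pointwise), and then identify $B_\alpha$ and $B_{\alpha\beta}$ as small sums of lines via a case analysis on $\alpha\beta\in\{1,\text{bitransposition},3\text{-cycle}\}$. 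The payoff is the stronger bound $\ad_\alpha(H_{(ij)})\le L_{(ab)}$, but the price is the extra bookkeeping: one must show $B_\alpha\le L_{(12)}+L_{(ab)}$, split certain $L_{(k\ell)}$ via Claim~\ref{elemma:cl:LijProperties}\eqref{i:linegeometry}, and intersect case by case. The paper's approach is the more economical of the two, and the identity $H_{(ij)}=C_\alpha+V_{\{i,j\}^\perp}$ is reusable (it is invoked again in the next claim when defining $\tau_{(ij)}$); worth knowing even if your argument also works.
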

\begin{proofclaim}
We first show $V_{\{i, j\}^\perp} \leq H_{(ij)}$. Let $(k\ell) \perp (ij)$. If $q = 2$, then since $\ad_\alpha=\tr_\alpha$  for every bi-transposition $\alpha$, one has 
$L_{(k\ell)} \leq B_{(ij)(k\ell)} = C_{(ij)(k\ell)} \leq H_{(ij)}$.
If $q \neq 2$, then taking $(x, y) \perp \{i, j, k, \ell\}$, one has by $2$-divisibility $L_{(k\ell)} \leq  C_{(ij)(xy)} \leq H_{(ij)}.$
In either case, $V_{\{i, j\}^\perp} \leq H_{(ij)}$. 
Moreover, any two $\alpha, \beta \in E_{i, j}$ agree modulo $[A_{\{i, j\}^\perp}, V]$, so using Claim~\ref{elemma:cl:V}\eqref{i:bracketAI},
\[C_\beta \leq C_\alpha + [A_{\{i, j\}^\perp}, V] = C_\alpha + V_{\{i, j\}^\perp}.\] 
Thus, we have in fact shown  $H_{(ij)} = C_\alpha + V_{\{i, j\}^\perp}$ for any $\alpha \in E_{i, j}$.

Fix distinct $k,a,b \notin \{i,j\}$. Since $V_{\{i, j\}^\perp} \leq H_{(ij)}$, Claim~\ref{elemma:cl:V}(\ref{i:VItotal},\ref{i:sumIJ}) imply $V =  V_{\{i,j,k\}} + H_{(ij)}$.
Also, for  $\alpha = (ij)(ab)$, by Claim~\ref{elemma:cl:LijProperties}\eqref{i:localequations} one has $\ad_\alpha (V_{\{i,j,k\}}) \le L_{(ij)}$, and \[\ad_\alpha H_{(ij)} = \ad_\alpha(C_\alpha + V_{\{i,j\}^\perp}) = \ad_\alpha(\underbrace{C_\alpha + V_{\{i,j,a,b\}^\perp}}_{\le C_V(\alpha)} + V_{\{k,a,b\}}) \le V_{\{k,a,b\}}.\] By Claim~\ref{elemma:cl:V}\eqref{i:LVI}, $L_{(ij)} \cap V_{\{k,a,b\}} = 0$, so $\ad_\alpha(V_{\{i, j, k\}} \cap H_{(ij)} = 0$, meaning $V_{\{i,j,k\}} \cap H_{(ij)} \leq C_V(\alpha)$.
\end{proofclaim}
\begin{remark}
If $q \neq 2$, then one even has $V = H_{(ij)} + L_{(ij)}$, while if $q = 2$ then $L_{(ij)} \leq H_{(ij)}$. We however give a characteristic-independent endgame, treating reflections and transvections at once.
\end{remark}

\begin{claim}\label{elemma:cl:Embed}
There are compatible involutive operators $\{\tau_{(ij)}: i \neq j\} \subset \Aut_\cU(V)$ such that for $\Sigma := \generated{\tau_{(ij)}: i \neq j}$ and $S:= \Sym(n)$ we have:
\begin{itemize}
\item 
the map $(ij) \mapsto \tau_{(ij)}$ extends to an isomorphism $S \simeq \Sigma$ with $A = \Sigma'$;
\item
the image of $S_{(ij)^\perp}$ centralises $B_{\tau_{(ij)}}$.
\end{itemize}
\end{claim}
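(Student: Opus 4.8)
The plan is to produce a single compatible involution $\tau := \tau_{(12)} \in \Aut_\cU(V)$ satisfying $\tau \sigma \tau^{-1} = {}^{(12)}\sigma$ on $V$ for every $\sigma \in A$, where ${}^{(12)}\sigma := (12)\sigma(12) \in \Alt(n) = A$ acts through the given action; granting this, the rest is formal. Such a $\tau$ cannot lie in $A$: otherwise $\tau$, viewed as a permutation, would satisfy $\tau^{-1}(12) \in C_{\Sym(n)}(\Alt(n)) = 1$ by faithfulness, forcing $(12) = \tau \in A$. On the other hand $\tau^2 = 1 \in A$, so $\Sigma := A\langle \tau \rangle$ contains $A$ with index $2$; and since $\Sym(n) = A \rtimes \langle (12) \rangle$ internally with $(12)$ acting on $A$ exactly as $\tau$ does, the rule ``be the given action on $A$, send $(12)$ to $\tau$'' extends to a homomorphism $\phi \colon \Sym(n) \to \Sigma$. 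It is onto by construction, and injective because $\ker \phi$ is normal in $\Sym(n)$ and meets $A$ trivially; hence $\phi$ is an isomorphism, $A = \Sym(n)' \mapsto \Sigma'$, and we set $\tau_{(ij)} := \phi((ij))$. Each $\tau_{(ij)}$ is a conjugate $\sigma \tau \sigma^{-1}$ with $\sigma \in A$ (available as $n \ge 4$), so it is compatible and involutive. It remains to construct $\tau$, to verify the conjugation relation, and to check the centraliser condition.

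To build $\tau$ I would use the line $L_{(12)}$ and the ``hyperplane'' $H_{(12)}$ of Claims~\ref{elemma:cl:L} and \ref{elemma:cl:H}. When $q \neq 2$, the remark after Claim~\ref{elemma:cl:H} gives $V = H_{(12)} + L_{(12)}$; a short computation with Claim~\ref{elemma:cl:LijProperties} — $L_{(12)} \le B_\alpha$ for $\alpha := (12)(34)$, on which $\ad_\alpha$ acts as multiplication by $2$, whereas $\ad_\alpha$ sends $H_{(12)} = C_\alpha + V_{\{1,2\}^\perp}$ into $L_{(34)}$, which is disjoint from $L_{(12)}$ — shows that $H_{(12)} \cap L_{(12)}$ is killed by $2$. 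Then the inclusion $\iota$ and its negative $-\iota$ agree there, and the [extensions] Universe Property yields a compatible involution $\tau$ equal to $+1$ on $H_{(12)}$ and $-1$ on $L_{(12)}$; moreover $B_\tau = 2L_{(12)} = L_{(12)}$ by $2$-divisibility. When $q = 2$ one instead has $L_{(12)} \le H_{(12)}$ and $V = V_{\{1,2,k\}} + H_{(12)}$ with $V_{\{1,2,k\}} = B_{(12k)} = L_{(1k)} \oplus L_{(2k)}$ (Claims~\ref{elemma:cl:LijProperties}, \ref{elemma:cl:V}); here I would glue $+1$ on $H_{(12)}$ with the involution of $V_{\{1,2,k\}}$ that swaps $L_{(1k)}$ and $L_{(2k)}$ along the isomorphism $L_{(1k)} \xrightarrow{\sim} L_{(2k)}$ induced by any $\rho = (12)(mn) \in A$, the two maps agreeing on $V_{\{1,2,k\}} \cap H_{(12)} \le L_{(12)}$, which the swap fixes pointwise; again $B_\tau = L_{(12)}$.

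The conjugation relation is the heart of the matter. Since the $3$-cycles $(12k)$, $k = 3, \dots, n$, generate $A$ and $(12)$ inverts each of them inside $\Sym(n)$, while $\sigma \mapsto \tau \sigma \tau^{-1}$ and $\sigma \mapsto {}^{(12)}\sigma$ are both homomorphisms $A \to \Aut_\cU(V)$, it suffices to check that $\tau$ inverts $(12k)$ as an operator on $V$ for each $k$. The plan is to verify this line by line, using Claims~\ref{elemma:cl:L}--\ref{elemma:cl:H}: $\tau$ acts as $-1$ on $L_{(12)}$, as the identity on $L_{(pq)}$ with $\{p,q\} \cap \{1,2\} = \emptyset$ (these lie in $V_{\{1,2\}^\perp} \le H_{(12)}$), and interchanges $L_{(1q)}$ with $L_{(2q)}$ — the position of $L_{(1q)}$ inside $V_{\{1,2,q\}}$ relative to $L_{(12)}$ and $H_{(12)}$ being governed by the local equations of Claim~\ref{elemma:cl:LijProperties}\eqref{i:localequations} — while $(12k)$ cycles $L_{(1k)} \to L_{(12)} \to L_{(2k)} \to L_{(1k)}$ and permutes the remaining lines by its action on indices. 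Matching these, with careful tracking of signs on $L_{(12)}$, $L_{(1k)}$ and $L_{(2k)}$, yields $\tau (12k) \tau^{-1} = (12k)^{-1}$. I expect this verification, together with the sign-free but slightly different bookkeeping in characteristic $2$, to be the principal obstacle; the surrounding assembly is routine.

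Finally, for the centraliser condition: $B_{\tau_{(12)}} = L_{(12)}$ is centralised by $A_{\{1,2\}^\perp}$ (Claim~\ref{elemma:cl:L}) and fixed by $\tau_{(34)}$, since $L_{(12)} \le V_{\{3,4\}^\perp} \le H_{(34)}$ (Claim~\ref{elemma:cl:H}). As $S_{(12)^\perp} \cong \Sym(\{3,\dots,n\}) = \langle A_{\{1,2\}^\perp}, \tau_{(34)} \rangle$ (using $n - 2 \ge 3$), we obtain $[S_{(12)^\perp}, B_{\tau_{(12)}}] = 0$, and the analogue for every $\tau_{(ij)}$ follows by transporting along the $\Sym(n)$-equivariant isomorphism $\phi$. (Uniqueness of the extension, and the count of extensions, are not part of this claim and are dealt with afterwards.)
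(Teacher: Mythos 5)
Your strategy — build a single compatible involution $\tau = \tau_{(12)}$ realising the outer automorphism of $A$ induced by $(12)$, then extend formally through the internal decomposition $\Sym(n) = \Alt(n)\rtimes\langle (12)\rangle$ — is a legitimate alternative to the paper's route, which instead constructs each $\tau_{(ij)}$ individually by the recipe ``agree with $(ij)(ab)$ on $V_{\{i,j,k\}}$, centralise $H_{(ij)}$'' and then verifies the relations $\tau_{(ij)}\tau_{(jk)} = (ijk)$ directly via the decomposition $V = V_{\{i,j,k,\ell\}} + V_{\{i,j,k\}^\perp}$. Your formal scaffolding (why $\tau \notin A$, why $\phi$ extends and is injective, how to transport compatibility and the centraliser condition by $A$-conjugacy) is all sound.

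However there is a genuine gap: you explicitly defer the central computation, namely that $\tau (12k)\tau^{-1} = (12k)^{-1}$, and the ``line-by-line'' plan you sketch for it is not as routine as presented. The plan rests on the assertion that the reflection $\tau$ ($+1$ on $H_{(12)}$, $-1$ on $L_{(12)}$) interchanges $L_{(1q)}$ and $L_{(2q)}$; but that is not immediate from the definition of $\tau$. To see it one must show that for $v\in L_{(1q)}$ the decomposition $v = h + \ell$ along $H_{(12)} + L_{(12)}$ has $2\ell = \ad_\alpha v$, which in turn requires controlling $\ad_\alpha h$ for $h$ in the $V_{\{1,2\}^\perp}$-part of $H_{(12)}$ (the group $\alpha = (12)(ab)$ does \emph{not} fix $H_{(12)}$ pointwise, only $C_\alpha$ does). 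This is doable, but it is nontrivial and is exactly the kind of bookkeeping the paper sidesteps by instead using the clean two-piece decomposition $V = V_{\{i,j,k,\ell\}} + V_{\{i,j,k\}^\perp}$, on each part of which the two maps under comparison visibly agree. Moreover, tracking which \emph{subgroups} $L_{(pq)}$ get permuted, as your sketch does, is insufficient: one needs to show the maps agree as operators, element by element.

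One further small slip in the characteristic-$2$ construction: you claim $V_{\{1,2,k\}}\cap H_{(12)} \le L_{(12)}$. This is not what Claim~\ref{elemma:cl:H} gives. The intersection lies in $C_V\bigl((12)(ab)\bigr)$, which in characteristic $2$ also contains the ``diagonal'' $\tr_{(12)(ab)}\bigl(L_{(1k)}\bigr)$ and so is in general strictly larger than $L_{(12)}$. The gluing still works — both pieces fix $C_V\bigl((12)(ab)\bigr)$ pointwise — but the intermediate inclusion as stated is false and should not be relied on.
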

\begin{proofclaim}
Fix any $k \notin \{i,j\}$. By Claim~\ref{elemma:cl:V}\eqref{i:AIperponVI}  $V_{\{i,j,k\}}$ is centralised by $A_{(ijk)^\perp}$, so every choice of $\alpha = (ij)(ab)$ with $a,b\notin \{i,j,k\}$ yields the same action on $V_{\{i,j,k\}}$. 
Using Claim~\ref{elemma:cl:H}, define $\tau_{(ij)}$ (currently depending on $k$) to agree with any such $\alpha$ on $V_{\{i,j,k\}}$ while centralising $H_{(ij)}$, which  makes sense as the intersection lies in $C_V(\alpha)$. Also, notice how $[\tau_{(ij)},V] = [\tau_{(ij)},V_{\{i,j,k\}}] = [\alpha,L_{(ik)} + L_{(jk)}] =  L_{(ij)}$ by Claim~\ref{elemma:cl:LijProperties}(\ref{i:localequations},\ref{i:linegeometry}).

We claim that our definition of $\tau_{(ij)}$ does not depend on the choice of $k$. Fix distinct  $k,k' \notin \{i,j\}$, with corresponding $\tau_{(ij)}$ and $\tau'_{(ij)}$. Consider the decomposition given by Claim~\ref{elemma:cl:V}(\ref{i:VItotal},\ref{i:sumIJ}):
\[V = V_{\{i,j,k,k'\}} + V_{\{i,j\}^\perp}.\]
Note that $V_{\{i,j,k,k'\}} = V_{\{i,j,k\}} + L_{(kk')} = V_{\{i,j,k'\}} + L_{(kk')}$. 
For $a,b\notin \{i,j,k,k'\}$, $\tau_{(ij)}$ and $(ij)(ab)$ agree on $V_{\{i,j,k\}}$ (by definition) and on $L_{(kk')}$, which they both centralise since $L_{(kk')}\leq V_{\{i, j\}^\perp} \leq H_{(ij)}$ by Claim~\ref{elemma:cl:H}. Hence $\tau_{(ij)}$ and $(ij)(ab)$ agree on $V_{\{i,j,k,k'\}}$. The same holds for $\tau'_{(ij)}$, so $\tau_{(ij)}=(ij)(ab)= \tau'_{(ij)}$ on $V_{\{i,j,k,k'\}}$. And as  $\tau_{(ij)}$ and $\tau'_{(ij)}$ centralise $V_{\{i,j\}^\perp} \le H_{(ij)}$, they agree globally. This completes the definition of the transpositions; they are in $\cU$ by the existence of compatible extensions from the \hyperlink{h:uproperties}{Universe Properties}.

We now verify that $\tau_{(ij)}\tau_{(jk)} = (ijk)$. Fix distinct $\ell, a,b \notin \{i,j,k\}$. Similar to as above (with $\ell$ replacing $k'$), consider \[V = V_{\{i,j,k,\ell\}} + V_{\{i,j,k\}^\perp}.\]
Since we may define $\tau_{(ij)}$ and $\tau_{(jk)}$ using the common parameter $\ell$, we find as before that $\tau_{(ij)}$ acts on $V_{\{i,j,k,\ell\}} = V_{\{i,j,\ell\}} + L_{(k\ell)} = V_{\{j,k,\ell\}} + L_{(i\ell)}$ as $(ij)(ab)$ and $\tau_{(jk)}$ as $(jk)(ab)$. 
Since each of $\tau_{(ij)}$, $\tau_{(jk)}$, and $(ijk)$  centralise  $V_{\{i,j,k\}^\perp}$, we are done.

These relations (which also imply that disjoint transpositions commute) guarantee $\Sigma \simeq \Sym(n)$ with $\Sigma' = A$. As noted, $B_{\tau_{(ij)}} = L_{(ij)}$, so distinct transpositions give rise to disjoint brackets, which clearly implies that the image of $S_{(ij)^\perp}$ centralises $B_{\tau_{(ij)}}$.
\end{proofclaim}
This proves existence and it remains to deal with uniqueness.

\begin{claim}\label{l:elemma:cl:uniqueness}
If $S \le \Aut_\cU(V)$ represents any compatible action of $\Sym(n)$ extending $\Alt(n)$, then up to tensoring with the signature, $S = \Sigma$.
\end{claim}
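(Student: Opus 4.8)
The plan is to recognise any second extension $S$ as the already-built $\Sigma$ twisted by a single global operator $c\in\Aut_\cU(V)$ which is forced to be $\pm 1$. Since $n\ge 7$, the group $A=\Alt(n)$ is the unique index-$2$ subgroup of $\Sym(n)$, so in any extension $S$ there is an unambiguous involution $\sigma_{(ij)}\in S$ realising the transposition $(ij)$; moreover, because the $\Sym(n)$-action \emph{extends} the given $\Alt(n)$-module structure, any product of the $\sigma_{(ij)}$ whose value is an even permutation equals, as an operator on $V$, the corresponding element of $A$ --- in particular $\sigma_{(ij)}\sigma_{(k\ell)}=\tau_{(ij)}\tau_{(k\ell)}$ whenever both denote the same element of $A$. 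First I would produce $c$: for \emph{disjoint} pairs $\{i,j\}$ and $\{k,\ell\}$ both $\sigma_{(ij)}\sigma_{(k\ell)}$ and $\tau_{(ij)}\tau_{(k\ell)}$ equal $(ij)(k\ell)\in A$, so $\tau_{(ij)}\sigma_{(ij)}=\tau_{(k\ell)}\sigma_{(k\ell)}$; as the graph on $2$-subsets of $\{1,\dots,n\}$ with edges between disjoint pairs is connected for $n\ge 5$, all the operators $c:=\tau_{(ij)}\sigma_{(ij)}$ agree, giving $\sigma_{(ij)}=\tau_{(ij)}c$ for every pair.

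Next I would show $c\in C_{\Aut_\cU(V)}(A)$. For distinct $i,j,k$ the operators $\sigma_{(ki)}\sigma_{(ij)}$ and $\tau_{(ki)}\tau_{(ij)}$ are the same $3$-cycle of $A$; substituting $\sigma_{(\cdot)}=\tau_{(\cdot)}c$ and cancelling $\tau_{(ki)}$ on the left yields $c\,\tau_{(ij)}\,c=\tau_{(ij)}$, i.e. conjugation by $\tau_{(ij)}$ inverts $c$. Consequently every $3$-cycle of $A$, being a product of two such $\tau$'s, centralises $c$; since $3$-cycles generate $A$, we get $c\in C_{\Aut_\cU(V)}(A)$.

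Then I would collapse $c$ to an involution and conclude. As $c$ commutes with $A$, it stabilises every line $L_{(ij)}$ from Claim~\ref{elemma:cl:L} (an image of endomorphisms built from $A$), and by the existence proof $L_{(ij)}=B_{\tau_{(ij)}}$ (Claim~\ref{elemma:cl:Embed}), on which the involution $\tau_{(ij)}$ acts as $-1$; hence $\sigma_{(ij)}=\tau_{(ij)}c$ acts as $-c$ on $L_{(ij)}$, and $\sigma_{(ij)}^2=1$ forces $c^2=1$ on $L_{(ij)}$. Since $V=\sum_{i\ne j}L_{(ij)}$ (Claim~\ref{elemma:cl:LijProperties}) and $c^2$ is additive, $c^2=1$ on all of $V$. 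Now $\im(1-c)$ and $\im(1+c)=\im(1-(-c))$ are dim-connected $A$-submodules of the dc-irreducible $V$, hence each is $0$ or $V$; from $(1-c)(1+c)=1-c^2=0$ we get $\im(1+c)\le\ker(1-c)=C_V(c)$, so if $c\ne\pm 1$ then $\im(1-c)=\im(1+c)=V$, and $\im(1+c)=V$ forces $C_V(c)=V$, i.e. $c=1$, contradicting $\im(1-c)=V\ne 0$. Thus either $c=1$, whence $\sigma_{(ij)}=\tau_{(ij)}$ and $S=\generated{A,\sigma_{(ij)}:i\ne j}=\Sigma$, or $c=-1=\mu_{-1}$, whence $S$ agrees with $\Sigma$ on $A=\Sigma'$ while every odd element of $S$ is $\mu_{-1}$ times an odd element of $\Sigma$, i.e. $S=\Sigma\otimes\sgn$.

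The main obstacle is the step $c^2=1$. Steps one and two are routine manipulations with the Coxeter-type relations among the $\sigma_{(ij)}$, and once $c$ is known to be an involution the dc-irreducibility of $V$ finishes matters at once; but a priori $c$ lies only in $C_{\Aut_\cU(V)}(A)$, which in this generality need not be as small as $\{\pm 1\}$ (think of actions on tori). The genuine input is precisely the geometric identity $B_{\tau_{(ij)}}=L_{(ij)}$ delivered by the existence half of the lemma, combined with reading $\sigma_{(ij)}^2=1$ on the single line $L_{(ij)}$; everything else is bookkeeping.
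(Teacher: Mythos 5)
Your proof is correct, and it takes a genuinely different route from the paper. The paper proceeds by controlling the module $V^+:=\sum_{i\ne j}\tr_{t_{(ij)}}(L_{(ij)})$: after reducing (via a sign twist) to $2\dim L_t^+\le\dim L_t$, it shows $L_{(12)}^+\le L_{(13)}^++L_{(23)}^+$, bounds $\dim V^+\le(n-1)\ell^+<\dim V$ by the directness of $L_{(12)}+L_{(23)}+\cdots+L_{(n-2,n-1)}$, concludes $V^+=0$ by dc-irreducibility, and then identifies $t_{(ij)}$ with $\tau_{(ij)}$ piece by piece on the decomposition $V=V_{\{i,j,k\}}+V_{\{i,j\}^\perp}$. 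You instead extract the global discrepancy $c:=\tau_{(ij)}\sigma_{(ij)}$, show it is well-defined (connectedness of the Kneser graph on $2$-subsets, $n\ge 5$) and lies in $C_{\Aut_\cU(V)}(A)$ by pure Coxeter bookkeeping, then invoke the single geometric fact $B_{\tau_{(ij)}}=L_{(ij)}$ together with $\sigma_{(ij)}^2=1$ to pin down $c^2=1$, and finish by dc-irreducibility applied to $\im(1\pm c)$. What the paper's argument buys is a very explicit, local identification $t_{(ij)}=\tau_{(ij)}$ on the natural decomposition of $V$ into blocks, in the same spirit as the rest of the Extension Lemma; what yours buys is conceptual transparency and a cleaner separation of concerns — the sign ambiguity is not an \emph{a priori} normalisation ('up to tensoring, assume $2\ell^+\le\ell$') but emerges at the very end as the precise computation of $C_{\Aut_\cU(V)}(A)\cap\{\text{involutions inverted by all }\tau\}$. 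Both routes correctly collapse to a unique extension when $q=2$, yours because $\mu_{-1}=\Id$ there and the paper's because $\tr_t=\ad_t$ in characteristic $2$. One small caveat worth flagging in your write-up: on $L_{(ij)}$ the involution $\tau_{(ij)}$ acts as $-1$ only in characteristic $\ne 2$; in characteristic $2$ it acts trivially on its own bracket, but since $-1=+1$ there the conclusion $c^2=1$ on $L_{(ij)}$ still falls out unchanged — worth a parenthetical so the reader doesn't stumble.
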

\begin{proofclaim}
Say that $S$ is generated by transpositions $t_{(ij)}$. We prove that, up to tensoring, each transposition $t_{(ij)}$ coincides with $\tau_{(ij)}$ as defined in Claim~\ref{elemma:cl:Embed}. Clearly for any $s \in S$ one has $s(L_{(ij)}) = L_{(s(i) s(j))}$.

Consider the action of $t = t_{(ij)}$ on $L_t = L_{(ij)}$; write $L_t^+ = \tr_t(L_t)$.
Of course $\ell = \dim L_t$ and $\ell^+ = \dim L_t^+$ do not depend on $t$; as a matter of fact for $s \in S$ we still have $s(L_{(ij)}^+) = L_{(s(i)s(j))}^+$.
We shall prove that up to tensoring with the signature, $L_t^+ = 0$ for any transposition $t \in S$. 

First, we show that, up to tensoring, we may assume $2\ell^+ \leq \ell$. Here is an argument we shall repeat in the proof of the \hyperlink{h:fglemma}{First Geometrisation Lemma}, Claim~\ref{c:fgl:cl:tensoring}. 
In characteristic $2$, one has $(t+1)^2 = 0$, so $\im (1 + t) \leq \ker(1+t)$. Also, \emph{restricting $t$ to $L_t$}, we have that $\dim \ker(1 + t) + \dim \im (1 + t) = \dim L_t = \ell$; hence $2 \ell^+ \leq \ell$. 
In characteristic not $2$, up to tensoring, we may exchange $\tr_t$ with $\ad_t$, hence $\tr_t(L_t)$ with $\ad_t(L_t)$, and therefore assume the same. So up to tensoring (to no effect if $q = 2$), $2\ell^+ \leq \ell$. Under this assumption, we  prove  $t_{(ij)} = \tau_{(ij)}$, viz.~uniqueness.

We first contend $L_{(12)}^+ \leq L_{(13)}^+ + L_{(23)}^+$.
Let $a \in L_{(12)}^+$; by Claim~\ref{elemma:cl:LijProperties}\eqref{i:linegeometry} there is a decomposition $a = a_{13} + a_{23}$ with $a_{13} \in L_{(13)}$ and $a_{23} \in L_{(23)}$. Notice how $a = t_{(12)} a = t_{(12)} a_{23} + t_{(12)}a_{13}$, so as $L_{(13)} \cap L_{(23)} = 0$, we find that  $t_{(12)}$ swaps $a_{13}$ and $a_{23}$.
Our approach is to show that we similarly have $t_{(13)} a = a_{23}$ and  $t_{(23)} a = a_{13}$; then, as $t_{(13)} (L_{(12)}^+) = L_{(23)}^+$ and $t_{(23)}(L_{(12)}^+) = L_{(13)}^+$, we will be done. Now, using $a_{23} \in L_{(23)} \leq B_{(123)} \leq \ker \tr_{(123)}$ from Claim~\ref{elemma:cl:LijProperties}\eqref{i:localequations}:
\begin{align*}
0 & = a_{23} + (123) a_{23} + (123)^2 a_{23}\\
& = a_{23} + t_{(13)}t_{(12)} a_{23} + (132)a_{23}\\
& = a_{23} + t_{(13)} a_{13} + (132)a_{23}\\
& = \underbrace{a_{23}}_{\in L_{(23)}} + \underbrace{t_{(13)} a}_{\in L_{(23)}} - \underbrace{t_{(13)} a_{23}}_{\in L_{(12)}} + \underbrace{(132) a_{23}}_{\in L_{(12)}}
.
\end{align*}
Disjointness of lines proves $t_{(13)}a = a_{23}$, and similarly we find  $t_{(23)} a = a_{13}$. Thus, $a_{23} \in L_{(23)}^+$ and $a_{13} \in L_{(13)}^+$, as desired.

We claim that $t_{(ij)}$ inverts $L_{(ij)}$ and centralises $V_{\{i,j\}^\perp}$. 
Let $V^+ := \sum_{i \neq j} L_{(ij)}^+$. 
It is $\Alt(n)$-invariant so  equals $0$ or $V$ by dc-irreducibility. And from our work above, $V^+ = L_{(12)}^+ + L_{(23)}^+ + \dots + L_{(n-1, n)}^+$.
Since we are assuming $2\ell^+ \leq \ell$, Claim~\ref{elemma:cl:V}(\ref{i:VItotal},\ref{i:n-2lines}) now yields:
\[\dim V^+ \leq (n-1) \ell^+ \leq \frac{(n-1)}{2} \ell < (n-2) \ell \leq \dim V,\]
so $V^+ < V$, implying $V^+ = 0$.
In particular,  $t_{(ij)}$ inverts $L_{(ij)}$, and then by Claim~\ref{elemma:cl:L}, we see that for $(k\ell)\perp(ij)$,  $t_{(k\ell)} = t_{(ij)}(ij)(k\ell)$ centralises $L_{(ij)}$.

We now prove that $t_{(ij)}$ and $\tau_{(ij)}$ agree everywhere. By Claim~\ref{elemma:cl:V}(\ref{i:VItotal},\ref{i:sumIJ}), $V = V_{\{i,j,k\}} + V_{\{i,j\}^\perp}$, with both maps centralising the latter term. By definition, $\tau_{(ij)}$ acts on $V_{\{i,j,k\}}$ as $(ij)(ab)$ for any $(ab)\perp(ij)$, and as $t_{(ab)}$ centralises $V_{\{i,j,k\}}$, $t_{(ij)}$ also acts on $V_{\{i,j,k\}}$ as $(ij)(ab)$. We are done.
\end{proofclaim}

This completes the proof of the \hyperlink{h:elemma}{Extension Lemma}.
\end{proof}\setcounter{claim}{0}

\begin{remark}
Uniqueness does not hold without assuming compatibility of the action of $S$: one could break $L = L^+ \oplus L^-$ into abstract pieces (e.g.~viewing the complex field as a real vector space), producing a non-compatible decomposition $V = V^+ \oplus V^-$, one being the sign-tensored version of the other.
\end{remark}

\subsection{The Geometrisation Lemma}\label{s:glemma}
Here we establish that $\Sym(n)$- and $\Alt(n)$-modules of sufficiently low dimension necessarily carry the geometric structure of the standard module, leading to their identification by our previous work. 

We begin by treating the minimal case for $\Sym(n)$. The authors disagree on the significance and relevance in the present paper of isolating this situation; it is to the reader to decide. Nevertheless, the \hyperlink{h:fglemma}{First Geometrisation Lemma} will be used in the proof of the significantly more powerful \hyperlink{h:glemma}{Geometrisation Lemma}. 
Do note that the special case addressed here holds for all $n$ (trivially  for $n<3$). As a result, we need to account for $\Sym(6)$-modules that are `quasi-equivalent' to $\rstd(6, 2)$ via a twisting of the module by an outer automorphism of $\Sym(6)$.

\begin{fglemma}
Let $n\ge 3$,  $S:=\Sym(n)$, and $V \in \Mod(S, d, q)$ be faithful. Then 
$d \geq n-2$; if equality holds, then $n\ge 5$, and for any transposition $\tau \in S$, we are in one of the following cases:
\begin{itemize}
\item
up to tensoring with the signature, $V$ satisfies  $[S'_{\tau^\perp}, B_\tau] = 0$;
\item
$n = 6$, $q=2$, and up to composing the action with some $\sigma \in \Out(\Sym(6))$, $V$ satisfies  $[S'_{\tau^\perp}, B_\tau] = 0$. 
\end{itemize}
\end{fglemma}

\begin{remarks}\leavevmode
\begin{itemize}
\item
The (reduced) standard module is \emph{not} faithful when $n = 3, 4$.
\item
The proof requires going to quotients, so dealing with objects which are not subobjects. See Remarks on p.~\pageref{r:catenarity}.
\end{itemize}
\end{remarks}

\begin{proof}
Let $S = \Sym(n)$ and $A = S'$. 
We begin with the soluble cases. 

\begin{claim}\label{c:fgl:cl:soluble}
If $n\in \{3,4\}$, then $d\ge n-1$. 
\end{claim}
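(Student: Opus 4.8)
The plan is to dispatch $n=3$ and $n=4$ by splitting on whether $q=2$. For $q\neq 2$ the relevant elementary abelian $2$-subgroup of $\Sym(n)$ acts coprimely, so the \hyperlink{h:clemma}{Coprimality Lemma} and \hyperlink{h:wlemma}{Weight Lemma} are available; for $q=2$ those tools are lost, and I would instead exploit that the operators $\ad_t=1-t$ attached to involutions $t$ (and $\ad_\sigma$ for a $4$-cycle $\sigma$) are nilpotent, combining this with additivity and with dim-connectedness of $V$ — which forces every proper submodule of $V$ in $\cU$ to have dimension $<d$. Note that this is enough for the \hyperlink{h:fglemma}{First Geometrisation Lemma} in the soluble range, since $d\ge n-1$ also rules out the equality case $d=n-2$ there.

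For $q\neq 2$ and $n=3$: I would fix a transposition $\tau$ and write $V=B_\tau\qoplus C_\tau$ by \hyperlink{h:clemma}{Coprimality}, with $\tau$ inverting the first factor and centralising the second. If $\dim B_\tau=0$ then $C_\tau=V$ by dim-connectedness and $\tau$ acts trivially, contradicting faithfulness; if $\dim C_\tau=0$ then $B_\tau=V$, so $\tau$ acts as $-\mathrm{Id}$, hence commutes with a $3$-cycle $g$, contradicting $\tau g\tau=g^{-1}$ (the operator $g$ has order $3$ by faithfulness). So both factors have dimension $\ge 1$ and $d\ge 2$. For $q\neq 2$ and $n=4$: I would apply the remark following the \hyperlink{h:wlemma}{Weight Lemma} to the normal Klein four-group $V_4\trianglelefteq\Sym(4)$, whose three nontrivial elements are conjugate in $\Sym(4)$, to get $d=\dim C_V(V_4)+3\ell$; if $\ell=0$ then $C_V(V_4)=V$ by dim-connectedness, so $V_4$ acts trivially, against faithfulness. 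Hence $\ell\ge 1$ and $d\ge 3$.

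For $q=2$ I would argue by nilpotency. When $n=3$: faithfulness forces $\dim B_\tau\ge 1$ for some transposition $\tau$, since otherwise every $C_V(\tau)=\ker\ad_\tau$ has full dimension, hence equals $V$, making all transpositions (so all of $\Sym(3)$) trivial on $V$; and since $\ad_\tau^2=1-\tau^2=0$ we get $\im\ad_\tau\subseteq\ker\ad_\tau$, so $2\dim B_\tau\le d$ by additivity, i.e. $d\ge 2$. When $n=4$: I would pick a $4$-cycle $\sigma$ and set $N=\ad_\sigma$; in characteristic $2$ one has $N^2=\ad_{\sigma^2}$ with $\sigma^2$ a bi-transposition, and $N^4=1-\sigma^4=0$. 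As in the $n=3$ case, faithfulness of $V_4$ forces $\dim\im N^2=\dim B_{\sigma^2}\ge 1$ (else all $C_V(\alpha)=V$ for bi-transpositions $\alpha$). Then the rank-of-powers bookkeeping finishes it: with $r_k=\dim\im N^k$ and $e_k=r_{k-1}-r_k$, additivity applied to the surjections $N\colon\im N^{k-1}\twoheadrightarrow\im N^k$ gives $e_k=\dim(\ker N\cap\im N^{k-1})$, so $e_1\ge e_2\ge e_3\ge e_4\ge 0$, while $\sum_k e_k=r_0-r_4=d$ and $e_3+e_4=r_2\ge 1$; hence either $e_3\ge 1$ (so $e_1,e_2,e_3\ge 1$ and $d\ge 3$) or $e_4\ge 1$ (so $d\ge 4$). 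Either way $d\ge 3$.

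I expect the characteristic-$2$ case for $n=4$ to be the main obstacle: there $V_4$ is a $2$-group in characteristic $2$, so there is neither a weight decomposition nor a coprime splitting available, and the bound $d\ge 3$ must be squeezed out of the single nilpotent operator $\ad_\sigma$ through the Jordan-type inequality above. A persistent subtlety throughout is that the additive dimension bears no relation to $\bF_2$-linear dimension, so one cannot simply cite the classical fact that a faithful $\bF_2[\Sym(4)]$-module has linear dimension $\ge 3$; the whole argument must run through additivity and dim-connectedness of $V$.
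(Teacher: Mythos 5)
Your proof is correct and reaches the same bound, but it takes a genuinely different route in the hardest subcase, namely $n=4$, $q=2$. The paper rules out $d\le n-2$ as follows: for $n=3$ it observes uniformly (all $q$) that any involution must centralise or invert a $1$-dimensional dc-module (since $B_\tau$ is dim-connected of dimension $0$ or $1$), forcing $\Alt(3)$ to act trivially; for $n=4$ and $q\ne 2$ it invokes the \hyperlink{h:wlemma}{Weight Lemma}, exactly as you do; and for $n=4$, $q=2$, $d=2$ it shows via quadraticity that $\dim B_\alpha=1$ and $B_\alpha\le (C_V(\alpha))^1$ for every bitransposition $\alpha$, whence $B_\alpha=B_\beta$ for all $\alpha,\beta$, so that $W:=[V_4,V]$ is a $\Sym(4)$-invariant $1$-dimensional dc-submodule with a $1$-dimensional quotient; the contradiction is then extracted from faithfulness and non-nilpotence of $\Alt(4)$ acting on the two $1$-dimensional factors. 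Your replacement for that last subcase — Jordan-rank bookkeeping for the single nilpotent operator $N=\ad_\sigma$ attached to a $4$-cycle $\sigma$, using $N^2=\ad_{\sigma^2}$, $N^4=0$, faithfulness on $V_4$ to get $r_2=\dim\im N^2\ge 1$, and additivity to prove the partial ranks $e_k=\dim(\ker N\cap\im N^{k-1})$ are non-increasing — is more mechanical and self-contained: it uses nothing beyond faithfulness of the Klein four-group and the formal properties of $\dim$, and sidesteps the paper's closing appeal to the structure of $\Alt(4)$ acting on a short filtration. Your handling of $n=3$ by splitting on $q$ (\hyperlink{h:clemma}{Coprimality} versus quadraticity) is slightly longer than the paper's uniform observation but equally valid, and like the paper's argument it correctly avoids dc-irreducibility, which is not yet available at this point in the \hyperlink{h:fglemma}{First Geometrisation Lemma}.
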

\begin{proofclaim}
Suppose $d \leq n-2$. If $n = 3$, then the action cannot be faithful since involutions must either centralise or invert a $1$-dimensional dc-module, forcing $A$ to act trivially. 

If $n = 4$, then for the same reason $d = 2$, and if $q \neq 2$, then the \hyperlink{h:wlemma}{Weight Lemma} yields a contradiction. So if $n = 4$, then $q = 2$, and involutions act quadratically (with $(i-1)^2 = 0$). By faithfulness, for bi-transpositions $\alpha \neq \beta$, one has $\dim B_\alpha = \dim B_\beta = 1$. Thus, $B_\alpha = (C_V(\alpha))^1$, and letting $\alpha$ act on  $B_\beta$, one finds $B_\beta \le (C_V(\alpha))^1$, forcing $B_\alpha = B_\beta$. Hence, $B_\alpha = [A',V]$ is $S$-invariant, but then $A$ centralises the $1$-dimensional dc-modules $[A',V]$ and $V/[A',V]$, contradicting faithfulness (and non-nilpotence of $A$).
\end{proofclaim}

From now on $n \geq 5$. We may thus also suppose that $d$ is minimal such that $V$ is faithful; consequently, 
$V$ is now dc-irreducible. 

Assume $d \leq n-2$; we shall identify the module. 
Let $b = \dim B_\tau$, which does not depend on the transposition $\tau$.

\begin{claim}[`up to tensoring']\label{c:fgl:cl:tensoring}
We may assume $2b \leq d$.
\end{claim}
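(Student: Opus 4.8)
The plan is to mirror the tensoring trick already flagged in Claim~\ref{l:elemma:cl:uniqueness} of the \hyperlink{h:elemma}{Extension Lemma}, splitting according to whether $q = 2$.

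First I would fix a transposition $\tau$ and dispatch the case $q = 2$ directly, with no tensoring at all: since $\tau^2 = 1$, in $\End(V)$ one has $\ad_\tau^2 = (1-\tau)^2 = 2(1-\tau) = 2\,\ad_\tau$, which vanishes in characteristic $2$, so $B_\tau = \im \ad_\tau \le \ker \ad_\tau = C_V(\tau)$. Applying additivity to $\ad_\tau\colon V \to V$ gives $d = \dim C_V(\tau) + \dim B_\tau = \dim C_V(\tau) + b$, hence $b \le \dim C_V(\tau) = d - b$, i.e.\ $2b \le d$.

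For $q \ne 2$ the module $V$ is $2$-divisible (divisible when $q = 0$, of exponent $q$ otherwise), so the \hyperlink{h:clemma}{Coprimality Lemma} applied to $\generated{\tau}$ yields $V = B_\tau \qoplus C_\tau$, whence $d = b + b^{+}$ where $b^{+} := \dim C_\tau = \dim \im \tr_\tau$. In particular $\min(b, b^{+}) \le d/2$. Now passing from $V$ to $\sgn \otimes V$ keeps the module in $\Mod(S, d, q)$ (same underlying group, and each $g$ now acts as the compatible map $\mu_{\sgn(g)} \circ g$, using the module-structure axiom for $\mu_{-1}$), keeps it dc-irreducible (the dim-connected $G$-submodules are literally the same subsets), and keeps it faithful; and in $\sgn \otimes V$ the transposition $\tau$ acts as $-\tau$, so $\ad_\tau = 1-\tau$ and $\tr_\tau = 1+\tau$ are interchanged, hence so are $B_\tau$ and $C_\tau$, hence so are $b$ and $b^{+}$. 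Twisting by $\sgn$ when $b > b^{+}$ therefore secures $2b \le d$.

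The only point requiring a word is that $\sgn \otimes V$ is still faithful: an element of its kernel acts on $V$ as $\pm\,\mathrm{id}$, so the set $N$ of all elements acting as $\pm\,\mathrm{id}$ is a normal subgroup of $S$; it meets $A = \Alt(n)$ trivially, since a homomorphism from the perfect group $A$ to $\{\pm 1\}$ is trivial and $V$ is faithful, so $|N| \le [S:A] = 2$, and a normal subgroup of order $2$ would lie in $Z(S) = 1$ as $n \ge 5$. I do not anticipate any genuine obstacle here: the whole claim reduces to the two elementary endomorphism identities above together with the already-established \hyperlink{h:clemma}{Coprimality Lemma}, and the mild bookkeeping that tensoring with the signature preserves every hypothesis in play.
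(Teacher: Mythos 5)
Your proof is correct and follows essentially the same route as the paper: quadraticity of $\ad_\tau$ in characteristic $2$, and the \hyperlink{h:clemma}{Coprimality Lemma} plus twisting by $\sgn$ otherwise. The extra verification that $\sgn\otimes V$ remains faithful, dc-irreducible, and in $\Mod(S,d,q)$ is sound bookkeeping that the paper leaves implicit.
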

\begin{proofclaim}
If $q = 2$ then $\tau$ acts quadratically, viz.~$(\tau-1)^2 = 0$. Hence $B_\tau = \im (1 - \tau) \leq \ker(1-\tau)$. On the other hand, $\dim \ker(1 - \tau) + \dim \im (1 - \tau) = d$; hence $2b \leq d$. (This argument already appeared in the \hyperlink{h:elemma}{Extension Lemma}, Claim~\ref{l:elemma:cl:uniqueness}; it will be used again in the \hyperlink{h:glemma}{Geometrisation Lemma}, Claim~\ref{l:glemma:cl:even}.) If $q \neq 2$ then by the \hyperlink{h:clemma}{Coprimality Lemma}, $V = B_\tau \qoplus C_\tau$.  Tensoring with $\varepsilon$ exchanges $\tau$ with $-\tau$, hence $B_\tau$ with $C_\tau$, so up to tensoring, we may assume $\dim B_\tau \leq \dim C_\tau$ and $2b \leq d$.
\end{proofclaim}

\begin{claim}
If $n\in \{5,6\}$, we are done. 
\end{claim}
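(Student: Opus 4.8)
The plan is to exploit that here $n - 2 \leq 4$, so $V$ is very small, and to pin down its structure by letting an elementary abelian $2$-group act --- via the Weight Lemma when $q \neq 2$, via quadraticity of involutions when $q = 2$. Write $A := \Alt(n)$. First, $A$ acts faithfully: its kernel is normal in the simple group $A$, and if $A$ acted trivially then $V$ would be a faithful $S/A \cong C_2$-module on which all transpositions act alike, against faithfulness of $S$. Second, $b := \dim B_\tau \geq 1$, since transpositions generate $S$. Third, $V$ is already dc-irreducible as an $A$-module: if $W < V$ were a non-zero proper dim-connected $A$-submodule and $\tau$ any transposition, then $W + \tau W$ would be a dim-connected (Connectedness Properties) $S$-submodule --- using $A \trianglelefteq S$ and $\tau^2 = 1$ --- hence all of $V$, so $\dim W \geq d/2$; but $A$ also acts faithfully on $W$ by the same simplicity argument, and for $n \in \{5,6\}$ there is no faithful $A$-module of dimension $\leq 2 < d$ (dimension $1$ because $A$ is perfect; dimension $2$ by the two cases below, applied to $W$). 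Recall that we are assuming $d \leq n - 2$ and, after possibly tensoring with $\sgn$, that $2b \leq d$.

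\emph{Case $q \neq 2$.} Then $V$ is $2$-divisible, and we apply the Weight Lemma to the Klein four-subgroup $K := K_{1234} \leq A$, whose three involutions are conjugate in $\Alt(\{1,2,3,4\})$; by the remark following the Weight Lemma, $d = \dim C_V(K) + 3\ell$ with $\ell$ the common dimension of the three non-trivial weight spaces, and $\ell \geq 1$ by faithfulness. Since $d \leq n - 2 \leq 4$ this forces $\ell = 1$ and $d \in \{3,4\}$. The normaliser $N_A(K)$ --- equal to $\Alt(4)$ when $n = 5$ and to $\Sym(4)$ when $n = 6$ --- permutes the three weight spaces through its canonical epimorphism onto $\Alt(3)$, resp.\ $\Sym(3)$, with kernel $K$ acting on each by a sign; so, up to a $0$- or $1$-dimensional trivial summand, $V$ restricted to $N_A(K)$ is a copy of $\ustd(3, L_0)$, resp.\ $\ustd(4, L_0)$, with $\dim L_0 = 1$. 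For $n = 5$ this yields $d = 3 = n - 2$ at once and identifies the $A$-module structure closely enough to compute, for any transposition $\tau$, the (trivial) action of $S'_{\tau^\perp} = \Alt(3)$ on $B_\tau$. For $n = 6$ one must additionally rule out $d = 3$: restricting to $\Alt(\{1,\dots,5\}) \cong \Alt(5)$ (which acts faithfully on the $3$-dimensional $V$), one runs through the short list of faithful $3$-dimensional $\Alt(5)$-modules and checks that none of them extends to $\Alt(6)$ compatibly with the larger group $\Sym(6)$, the obstruction being that $\Sym(6)$ --- as the extension of $\Alt(6) \cong \PSL_2(\bF_9)$ by a field automorphism of $\bF_9$ --- does not preserve the isomorphism type of any of them. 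Hence $d = 4 = n - 2$, and one concludes as follows: pick the Klein four-group $K' := K_{\tau^\perp}$ supported off $\tau$ (which exists since $|\tau^\perp| = 4$); as above $\dim C_V(K') = 1$, the space $B_\tau$ sits inside $C_V(K')$ with the same dimension, and $S'_{\tau^\perp} = \Alt(4) \supseteq K'$ acts on this line; since $A_{\tau^\perp}$ visibly fixes the would-be generator $e_i - e_j$ of $B_\tau$ in the standard picture, $[S'_{\tau^\perp}, B_\tau] = 0$.

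\emph{Case $q = 2$.} Every transposition $\tau$ acts quadratically, $(1-\tau)^2 = 0$, so $B_\tau \leq \ker(1-\tau) = C_V(\tau)$; hence $2b \leq d$ and $b \leq \lfloor (n-2)/2 \rfloor \leq 2$. For $n = 5$, $b = 1$, and a short analysis of the quadratic action of the disjoint transpositions $(12)$ and $(34)$ shows their brackets cannot be fitted into a $3$-dimensional dc-irreducible faithful module --- matching the fact that $\Sym(5)$ has no faithful module of dimension $< 4$ in characteristic $2$ --- so this case does not occur. For $n = 6$, $d = 4 = n - 2$ is genuine, realised by $\rstd(6, L)$, the natural module of $\Sym(6) \cong \operatorname{Sp}_4(2)$; the task is to recognise $V$ as $\rstd(6, L)$ \emph{up to composing the action with an outer automorphism of $\Sym(6)$}, which swaps transpositions with triple-transpositions (equivalently, swaps the natural module and its twist by the graph automorphism). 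For this I would import the beginning of the line-geometry of the Extension Lemma: using quadraticity and $b \leq 2$ one shows that the brackets $B_{(ij)}$ --- or $B_{(ij)(k\ell)(mn)}$ after the outer twist --- satisfy the basic incidences of the lines $L_{(ij)}$ (distinct lines disjoint, $L_{(ij)} \leq L_{(ik)} + L_{(jk)}$, the sum $L_{(12)} + \dots + L_{(45)}$ direct), from which one reads off that distinct such brackets are disjoint --- which is exactly $[S'_{\tau^\perp}, B_\tau] = 0$; dimensions $d \leq 3$ are again excluded by the small-dimension analysis of the quadratic action.

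The genuinely delicate step is the $q = 2$, $n = 6$ case: recognising $V$, up to an outer automorphism of $\Sym(6)$, as a copy of $\rstd(6, L)$ is precisely what the $\Out(\Sym(6))$-clause in the statement is paying for, and this is not a soft argument --- one must either deploy enough of the line-geometry or analyse $\operatorname{Sp}_4(2)$ acting $4$-dimensionally by hand. A pervasive secondary difficulty is that one cannot simply cite the classical fact that $\Sym(5)$ and $\Sym(6)$ have no faithful representation of dimension $< n - 2$ in the relevant characteristics: in this setting ``dimension'' is an abstract additive invariant rather than a linear dimension over a prime field, so every such exclusion must be re-proved from dimension additivity together with dc-irreducibility and the Weight and Coprimality Lemmas.
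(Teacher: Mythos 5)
Your approach misses the paper's key numerical observation and runs into circularities that don't resolve. The paper's route is much shorter: since $2b \le d \le n-2 \le 4$, either $b=1$, in which case any transposition $\tau' \in S_{\tau^\perp}$ must centralise or invert the $1$-dimensional dc-module $B_\tau$ (by additivity applied to $\ad_{\tau'}$, $\tr_{\tau'}$), so $A_{\tau^\perp} = S'_{\tau^\perp}$ centralises it and we are done immediately; or $b \ge 2$, which forces $n=6$, $d=4$, $b=2$. In the latter case, Claim~1 (the soluble case, $n \in \{3,4\}$) shows $S_{\tau^\perp} \simeq \Sym(4)$ cannot be faithful on the $2$-dimensional $B_\tau$ nor on $V/B_\tau$, so its bi-transpositions act quadratically, forcing $q=2$; the exotic configuration is then dispatched by showing $B_\gamma = V$ for $3$-cycles $\gamma$, deriving $\gamma + \delta = 0$ for disjoint $\gamma, \delta$ unless $B_{\gamma\delta} < V$, and reading off that the outer automorphism of $\Sym(6)$ (which swaps these two classes) restores $b=1$. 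Neither a Weight Lemma decomposition nor $A$-irreducibility is invoked at all.

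Your argument, by contrast, has several genuine gaps. The preliminary step establishing dc-irreducibility of $V$ as an $\Alt(n)$-module is circular---you invoke ``the two cases below'' to rule out $\dim W = 2$, and you do not address $\dim W = 3$ when $n=6$, $d=4$; moreover, this step is not needed. In the $q \ne 2$, $n=6$ endgame, you assert $B_\tau \le C_V(K')$ without justification and appeal to what $A_{\tau^\perp}$ does ``in the standard picture,'' which is exactly what we are trying to establish. The exclusion of $d=3$ for $n=6$ (``run through the short list of faithful $3$-dimensional $\Alt(5)$-modules'') and the $n=5$, $q=2$ exclusion (``a short analysis \dots shows'') both rest on classical representation-theoretic facts that, as you yourself note at the end, are not available here since $\dim$ is an abstract additive invariant; these would need to be re-proved, but you do not do so. Finally, the $q=2$, $n=6$ case gestures at ``importing the beginning of the line-geometry of the Extension Lemma,'' which amounts to deferring the actual argument. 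The paper's proof avoids every one of these issues by exploiting the single inequality $2b \le d \le 4$ first.
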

\begin{proofclaim}
Let $n\le 6$ and $\tau$ be a transposition. If $b=1$, then $S_{\tau^\perp}$ must centralise or invert $B_\tau$, forcing $A_{\tau^\perp}$ to centralises $B_\tau$, so we may assume $b\ge 2$. Since $d \ge 2b$, we have reduced to the case of $n = 6$, $d = 4$, and $b = 2$. Moreover, by Claim~\ref{c:fgl:cl:soluble}, $S_{\tau^\perp} \simeq \Sym(4)$ is faithful on neither $B_\tau$ nor $V/B_\tau$, so bi-transpositions in $S_{\tau^\perp}$ are quadratic. Thus $q = 2$, and it remains to deal with this exotic configuration.

We  claim that for any $3$-cycle $\gamma$ one has $B_\gamma = V$.
We have seen that $K_{\tau^\perp} = A_{\tau^\perp}'$ is trivial on each of $B_\tau$ and $V/B_\tau$, so on each factor, all $3$-cycles from $S_{\tau^\perp}$ have the same action. Let $\gamma$ be one such $3$-cycle. Notice how $B_\tau = \ad_\gamma(B_\tau) \qoplus \tr_\gamma(B_\tau)$ is an $S_{\tau^\perp}$-invariant decomposition, so if either factor  is $1$-dimensional, then $\gamma \in A_{\tau^\perp}'$ centralises both, a contradiction. So, $\ad_\gamma(B_\tau)$ has dimension $0$ or $2$, and the analogous statement holds (with analogous proof) for $V/B_\tau$. Thus, $\dim B_\gamma$ is $2$ or $4$.  If $\gamma$ centralises $V/B_\tau$ then $B_\gamma \leq B_\tau$  and equality holds. Then $B_\gamma = B_\tau = B_{\gamma'}$ for any other $3$-cycle in $S_{\tau^\perp}$; conjugating, we contradict dc-irreducibility. If $\gamma$ centralises $B_\tau$ we find $C_\gamma = B_\tau = C_{\gamma'}$, again a contradiction. This shows $B_\gamma = V$.

Now let $\gamma, \delta$ be disjoint $3$-cycles and $\varepsilon = \gamma \delta$. If $B_\varepsilon = V$ as well, then $\tr_\gamma = \tr_\delta = \tr_\varepsilon = 0$, forcing:
\begin{align*}
0 & = 1 + \varepsilon + \varepsilon^2\\
& = 1 + \gamma \delta + (-1 - \gamma) (-1 - \delta)\\
& = \gamma + \delta,
\end{align*}
a clear contradiction. So $B_\varepsilon < V$. Since the outer (class of) automorphism swaps (the class of) $\gamma$ with (that of) $\varepsilon$, we deduce that twisting by $\sigma$, we no longer have $b = 2$. Hence, in this exotic case as well, we reduced to $b = 1$, implying that $A_{\tau^\perp}$ centralises $B_\tau$. 
\end{proofclaim}

\begin{claim}
If $n\ge 7$, we are done. 
\end{claim}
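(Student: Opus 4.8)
The plan is to close the \hyperlink{h:fglemma}{First Geometrisation Lemma} by induction on $n$, so that for every $m$ with $3 \le m < n$ we may use the lemma, and in particular the bound $\dim \ge m-2$ for faithful dim-connected $\Sym(m)$-modules. The base cases $m \le 6$ are the claims already proved: for $m = 3, 4$ the soluble claim (Claim~\ref{c:fgl:cl:soluble}) gives even $\dim \ge m-1$, and for $m = 5, 6$ one feeds the geometric condition produced by the preceding claim into the \hyperlink{h:rlemma}{Recognition Lemma}, which exhibits the module as a quotient of some $\ustd(m, L)$ with $L \neq 0$, hence of dimension $\ge (m-2)\dim L \ge m-2$. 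We keep the standing hypotheses of this proof: $V \in \Mod(S, d, q)$ is faithful, dc-irreducible, of minimal dimension with $d \le n-2$, and, after Claim~\ref{c:fgl:cl:tensoring}, $2b \le d$, where $b := \dim B_\tau$ for any transposition $\tau$.

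Now fix a transposition $\tau$. First I would record that $B_\tau = \im \ad_\tau$ is $S_{\tau^\perp}$-invariant (since $S_{\tau^\perp}$ commutes with $\tau$), dim-connected (Connectedness Properties), and, being a dc-submodule of $V$, of characteristic $q$ (Divisibility Properties); hence $B_\tau$ is an object of $\Mod(\Sym(n-1), b, q)$ under the natural action of $S_{\tau^\perp} \cong \Sym(n-1)$. The sole numerical input is that $2b \le d \le n-2$ forces $b \le (n-2)/2 < n-3 = (n-1)-2$, the strict inequality being valid for all $n \ge 5$ and a fortiori for $n \ge 7$. By the inductive form of the lemma applied to $\Sym(n-1)$, there is no faithful $\Sym(n-1)$-module of dimension $b < (n-1)-2$; therefore the action of $S_{\tau^\perp}$ on $B_\tau$ has nontrivial kernel $N \trianglelefteq S_{\tau^\perp} \cong \Sym(n-1)$. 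Since $n-1 \ge 5$, the nontrivial normal subgroups of $\Sym(n-1)$ are $\Alt(n-1)$ and $\Sym(n-1)$, each containing $\Alt(n-1) = S'_{\tau^\perp}$; so $S'_{\tau^\perp}$ centralises $B_\tau$, i.e.\ $[S'_{\tau^\perp}, B_\tau] = 0$. As $\tau$ was arbitrary and no normalisation beyond that of Claim~\ref{c:fgl:cl:tensoring} was invoked, $V$ falls into the first case of the statement, and the lemma follows for $n \ge 7$.

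I do not anticipate a real obstacle here: the delicate configurations of the lemma all live in the range $n \le 6$ treated before. The only points requiring care are bookkeeping — checking that $B_\tau$ genuinely lands in $\Mod(\Sym(n-1), b, q)$ (invariance, dim-connectedness, and a well-defined characteristic, all immediate from the Universe, Connectedness, and Divisibility Properties), and setting up the induction so that its base cases deliver the dimension bound rather than merely the geometric condition. For completeness one should also note that $b \ge 1$: otherwise $V = [S, V]$ is a finite sum of the $0$-dimensional submodules $B_\tau$, hence $0$-dimensional and so trivial, contradicting faithfulness — though in fact the displayed argument is unaffected even when $b = 0$, as $0 < (n-1)-2$ for every $n \ge 4$.
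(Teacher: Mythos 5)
Your approach — induct on $n$ via the action of $S_{\tau^\perp}$ on the dc-submodule $B_\tau$, and use the dimension bound from the inductive hypothesis against $2b \le d \le n-2$ — is exactly the paper's. However, you have misidentified the group: $S_{\tau^\perp}$ is, by the paper's notation, the subgroup supported on the complement of $|\tau|$, which is a set of size $n-2$, so $S_{\tau^\perp} \cong \Sym(n-2)$, \emph{not} $\Sym(n-1)$. Consequently the bound supplied by induction, when the action on $B_\tau$ is faithful, is $b \ge (n-2)-2 = n-4$, not $b \ge n-3$. The resulting pinch is $n-4 \le b \le \tfrac{1}{2}(n-2)$, which is consistent precisely when $n \le 6$ and contradictory exactly when $n \ge 7$ — this is what the claim's threshold is for, and it matches the inequality stated in the paper's proof.

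The issue is not merely cosmetic. Your displayed chain $b \le (n-2)/2 < n-3$ holds already for $n \ge 5$, which would render the separate (and delicate, involving the exotic $\Out(\Sym(6))$-twist) treatment of $n \in \{5,6\}$ redundant; that should have been a warning sign that something in the setup was off. The correct inequality makes clear why the preceding claim cannot be subsumed. The rest of your bookkeeping — invariance, dim-connectedness, and the characteristic of $B_\tau$; the structure of normal subgroups of $\Sym(n-2)$ for $n-2 \ge 5$ forcing the kernel to contain $\Alt(n-2) = S'_{\tau^\perp}$ — is correct once you replace $n-1$ by $n-2$ throughout.
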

\begin{proofclaim}
Let $\tau$ be a transposition. By induction, $A_{\tau^\perp}$ centralises $B_\tau$, as the alternative implies $n-4 \le b \le \frac{1}{2}(n-2)$. 
\end{proofclaim}
\setcounter{claim}{0}
This completes the \emph{first} geometrisation argument.
\end{proof}

We now present our main geometrisation result. 

\begin{glemma}
Let $A = \Alt(n)$ and $V \in \Mod(A, d, q)$. Suppose  $d < n$ and  
that either
\begin{itemize}
\item
$q = 2$ and $n \geq 10$; or
\item
$q \neq 2$ and $n \geq 7$.
\end{itemize}
Then for any bi-transposition $\alpha \in A$ one has $[A_{\alpha^\perp}, B_\alpha] = 0$.
\end{glemma}


\begin{proof}
We want to show that $A_{\alpha^\perp}$ acts trivially on $B_\alpha$; by conjugacy the desired property does not depend on $\alpha$.
The proof will by induction on $n$, but methods depend on the value of $q$.
Let $b = \dim B_\alpha$.

\begin{claim}[odd case]\label{l:glemma:cl:odd}
If $q \neq 2$, then we are done.
\end{claim}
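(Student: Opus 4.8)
The plan is to reduce to the case where $V$ is faithful and dc-irreducible, and then to exploit a Klein four-group together with the Weight Lemma. Since $n\ge 7$, the kernel of the action is normal in $\Alt(n)$, hence trivial or everything, so we may assume $V$ is faithful (if $V$ is trivial then $B_\alpha=0$ and there is nothing to prove). To reduce to dc-irreducibility we induct on $d$: given a non-trivial proper dim-connected submodule $0<W<V$, both $W$ and $V/W$ lie in $\Mod(A,d',q)$ with $d'<d$ (the characteristic is inherited via the Divisibility Properties), so by induction $A_{\alpha^\perp}$ centralises $B_\alpha^{W}$ and $B_\alpha^{V/W}$. From the quotient we get $[A_{\alpha^\perp},B_\alpha]\le W$; since the Coprimality Lemma (applicable as $q\neq 2$) shows $B_\alpha\cap W$ has dc-component $B_\alpha^{W}=[\alpha,W]$, and $[A_{\alpha^\perp},B_\alpha]$ is dim-connected, we obtain $[A_{\alpha^\perp},B_\alpha]\le B_\alpha^{W}$ and hence $[A_{\alpha^\perp},[A_{\alpha^\perp},B_\alpha]]=0$. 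For $n\ge 9$ the group $A_{\alpha^\perp}\cong\Alt(n-4)$ is perfect, so under this class-$2$ condition the map $g\mapsto(v\mapsto gv-v)$ on $B_\alpha$ is a homomorphism into an abelian group and is therefore trivial, giving $[A_{\alpha^\perp},B_\alpha]=0$. We may thus assume $V$ faithful and dc-irreducible, deferring $n\in\{7,8\}$ to a separate, hands-on treatment.

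Fix $\alpha$ and let $K$ be the Klein four-group with $|K|=|\alpha|$. As $q\neq 2$, $V$ is $2$-divisible, so the Weight Lemma yields $V=C_V(K)\qoplus V_1\qoplus V_2\qoplus V_3$ with dim-connected weight spaces; the three involutions of $K$ are $\Alt(n)$-conjugate, so $\ell:=\dim V_i$ is the same for all $i$, whence $d=\dim C_V(K)+3\ell$, so $3\ell\le d\le n-1$, and $\ell\ge 1$ by faithfulness. Exactly two weights are non-trivial on $\alpha$, so $B_\alpha=V_j\qoplus V_k$ for two of the $V_i$. Now $A_{\alpha^\perp}=\Alt(\{5,\dots,n\})$ commutes with $K$, hence stabilises each $V_i$, and a $3$-cycle supported inside $|\alpha|$ commutes with $A_{\alpha^\perp}$ while cycling $V_1,V_2,V_3$, so these are isomorphic as $A_{\alpha^\perp}$-modules. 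It therefore suffices to show that $A_{\alpha^\perp}$ acts trivially on one $V_i$.

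Suppose not. For $n-4\ge 5$ the group $\Alt(n-4)$ is simple, so it acts faithfully on $V_i$; that is, there is a faithful $\Alt(n-4)$-module of dimension $\ell$. Taking a minimal one (which is dc-irreducible) and applying, inductively, Geometrisation for $\Alt(n-4)$ followed by the Extension and Recognition Lemmas when $n-4\ge 7$ shows $\ell\ge(n-4)-2$, while for $n-4\in\{5,6\}$ one has $\ell\ge 3$ from the known minimal faithful dimensions of $\Alt(5)$ and $\Alt(6)$ in characteristic $\neq 2$. In every case $3\ell\le n-1$ is violated once $n\ge 9$, with the single exception $n=10$ (where $\ell=3$, necessarily in characteristic $3$, and $V_i$ is the exceptional adjoint module of $\Alt(6)\simeq\PSL_2(9)$). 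The genuinely delicate cases are exactly $n\in\{7,8\}$ — where $A_{\alpha^\perp}$ is only $C_3$ or $\Alt(4)$ and cannot be handled by simplicity; there one instead splits $V$ by the Coprimality Lemma along a $3$-cycle disjoint from $\alpha$, uses the quadratic/unipotent behaviour of transpositions as in the First Geometrisation Lemma, and leans on dc-irreducibility of $V$ to push the small weight-space action into a corner — and $n=10$ in characteristic $3$, whose elimination (showing that no $9$-dimensional faithful $\Alt(10)$-module carries that weight-space pattern) is the main obstacle of the argument.
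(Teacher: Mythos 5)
Your opening reduction to a faithful, dc-irreducible module is unnecessary (the \hyperlink{h:glemma}{Geometrisation Lemma} assumes neither, and the paper works directly on $V$) and it only goes through for $n\ge 9$, since for $n\in\{7,8\}$ the group $A_{\alpha^\perp}$ is not perfect; you concede this, which already leaves two of the four relevant values of $n$ untreated. The Klein-four/Weight-Lemma decomposition $V = C_V(K)\qoplus V_1\qoplus V_2\qoplus V_3$ with $B_\alpha = V_j\qoplus V_k$ and equidimensional weight spaces matches the paper exactly. The real problem comes next.

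The key observation you are missing is that the action of $A_{\alpha^\perp}\simeq\Alt(n-4)$ on $V_{\lambda_i}$ extends to one of $\Sigma_i\simeq \Sym(n-4)$, a subgroup of $\Alt(n)$ sitting inside $C_A(\alpha_i)$: since $\Sigma_i$ centralises $\alpha_i$ and swaps $\alpha_j,\alpha_k$, it stabilises $V_{\lambda_0}$ and $V_{\lambda_i}$. This upgrades the lower bound on $\ell:=\dim V_{\lambda_i}$ from a conjectural $\Alt(n-4)$ bound to the unconditional bound $\ell\ge (n-4)-2=n-6$ given by the \hyperlink{h:fglemma}{First Geometrisation Lemma}, which is proved for $\Sym(m)$ for all $m$ and is entirely independent of the lemma being established. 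With $3\ell\le d\le n-1$ this kills every $n\ge 9$ on the spot, including $n=10$, which in your version survives because the $\Alt(6)$ bound of $3$ (realised by the adjoint module of $\PSL_2(\bF_9)$ in characteristic $3$) is too weak, and which you then describe as ``the main obstacle'' without resolving it. Your appeal to ``known minimal faithful dimensions of $\Alt(5)$ and $\Alt(6)$'' is also on shaky ground: in the paper's setting these are listed as conjectural precisely because they are not established for modules with an additive dimension, and only the weak $\ell\ge 3$ is available by a short argument.

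Finally, your cases $n=7,8$ are not proved, only sketched in a sentence. The paper's treatment of those cases again depends on the $\Sigma_i$-action: one pins $a=2$, observes that an involution $\tau\in\Sigma_i$ (a bitransposition of $\Alt(n)$) moves one dimension inside $V_{\lambda_i}$ and two dimensions across $V_{\lambda_j}\leftrightarrow V_{\lambda_k}$, so $\dim B_\tau = 3$, contradicting $\dim B_\tau = 2a = 4$. Without the $\Sigma_i$ extension you have no handle on these dimensions, and the ``hands-on treatment'' you promise cannot be supplied by the tools you have set up. So the proposal, as written, does not close the argument for $n\in\{7,8,10\}$, and the gap traces back to one missing idea: extend from $\Alt(n-4)$ to $\Sym(n-4)$ before counting dimensions.
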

\begin{proofclaim}
Let $K = \{1, \alpha_i, \alpha_j, \alpha_k\}$ be a four-group of bitranspositions on the same support. Let $\Lambda = \{\lambda_0, \lambda_i, \lambda_j, \lambda_k\}$ be the weights with $\lambda_0 = 1$ and $\lambda_i = \alpha_i^\vee$, viz.~$\lambda_i(\alpha_i) = 1$ while $\lambda_i(\alpha_j) = \lambda_i(\alpha_k) = -1$. By the \hyperlink{h:wlemma}{Weight Lemma} there is a decomposition, in obvious notation,
\[V = \bigqoplus_{\lambda\in \Lambda} V_\lambda.\]
Notice at once $B_{\alpha_i} = V_{\lambda_j} + V_{\lambda_k}$.
Of course, $A_{\alpha^\perp}\simeq \Alt(n-4)$ normalises all weight spaces. Actually, we have more. There is $A_{\alpha^\perp} < \Sigma_i < C_A(\alpha_i)$ with $\Sigma_i\simeq \Sym(n-4)$; the group $\Sigma_i$ normalises $V_{\lambda_0}$ and $V_{\lambda_i}$ while swapping $V_{\lambda_j}$ and $V_{\lambda_k}$.
(Typically, if $\alpha_i = (12)(34)$, we take $\Sigma_i = \Sigma_{\alpha^\perp}^{(12)}$, which exchanges $\alpha_j = (13)(24)$ and $\alpha_k = (14)(23)$, hence also the relevant weights.)

Now, suppose that $A_{\alpha^\perp}$ does \emph{not} centralise $B_\alpha$. Then there is $i$ such that $A_{\alpha^\perp}$ does not centralise $V_{\lambda_i}$. The action extends to one of $\Sigma_i$. 

Let $a = \dim V_{\lambda_i}$, which does not depend on $i \neq 0$. As we are assuming $[A_{\alpha^\perp},V_{\lambda_i}] \neq 0$, we find $1 < a 
 \leq \frac{d}{3}$.
\begin{itemize}
\item
The case $n = 7$ requires scrutiny. The only nontrivial subcase is when $d = 6$, $a = 2$, and $V = V_{\lambda_i} + V_{\lambda_j} + V_{\lambda_k}$. Also $\Sigma_i\simeq \Sym(3)$ acts on $V_{\lambda_i}$, and the action of $\Sigma_i' = A_{\alpha^\perp}$ is nontrivial.
Therefore an involution $\tau \in \Sigma_i$ will satisfy $\dim [\tau, V_{\lambda_i}] = 1$.
As $\Sigma_i$ swaps $V_{\lambda_j}$ and $V_{\lambda_k}$, which have dimension $2$, $\dim [\tau, V_{\lambda_j} + V_{\lambda_k}] = 2$.
As a conclusion, $\dim B_\tau(V) = 3$, but $\tau$ is a bitransposition of $A$, hence $\dim B_\tau = 3 = 2a$, a contradiction.
\item
The same argument removes $n = 8$. (A possible $1$-dimensional $V_{\lambda_0}$ causes no problem.)
\item
If $n\ge 9$, the action of $\Sigma_i$ on $V_{\lambda_i}$ is faithful, and the \hyperlink{h:fglemma}{First Geometrisation Lemma} yields $n-6 \le a \le \frac{d}{3} \le \frac{1}{3}(n-1)$, a contradiction.
\end{itemize}
Hence $A_{\alpha^\perp}$ centralises $B_\alpha$, as claimed.
\end{proofclaim}

\begin{remark}
Assuming $V$ is faithful, one can even show $a = 1$ fairly directly; however it also is a consequence of applying \hyperlink{h:elemma}{Extension} and \hyperlink{h:rlemma}{Recognition}.
\end{remark}

\begin{claim}[even case]\label{l:glemma:cl:even}
If $q = 2$ then we are done.
\end{claim}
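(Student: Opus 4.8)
The plan is to argue by contradiction and induction on $n$. Suppose $[A_{\alpha^\perp}, B_\alpha] \neq 0$ for the bi-transposition $\alpha := (12)(34)$. Since $q = 2$, the involution $\alpha$ acts quadratically, $(1-\alpha)^2 = 0$ in $\End(V)$, so $B_\alpha = \im \ad_\alpha \le \ker \ad_\alpha$, and additivity forces $2b \le d < n$ where $b := \dim B_\alpha$. The module $B_\alpha$ is dim-connected by the Connectedness Properties, being the image of the compatible map $\ad_\alpha$. As $n-4 \ge 6$, the group $A_{\alpha^\perp} \simeq \Alt(n-4)$ is simple, so its non-trivial action on $B_\alpha$ is faithful; passing to the overgroup $\Sigma := \Sigma^{(12)}_{\{5,\dots,n\}} \simeq \Sym(n-4)$, which centralises $\alpha$ and hence acts on $B_\alpha$ with $\Sigma' = A_{\alpha^\perp}$, and using that $\Sym(n-4)$ has no non-trivial normal subgroup of order $\le 2$, one gets that $\Sigma$ too acts faithfully on $B_\alpha$. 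Thus $B_\alpha \in \Mod(\Sym(n-4), b, 2)$ is faithful.

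Next I would apply the \hyperlink{h:fglemma}{First Geometrisation Lemma} to $B_\alpha$ as a $\Sym(n-4)$-module: it gives $b \ge (n-4) - 2 = n - 6$. Together with $2b \le d \le n-1$ this yields $2(n-6) \le n-1$, that is, $n \le 11$. So for $n \ge 12$ we have the desired contradiction at once, and $A_{\alpha^\perp}$ centralises $B_\alpha$; only the base cases $n = 10$ and $n = 11$ remain, where I would use the equality case of the \hyperlink{h:fglemma}{First Geometrisation Lemma}.

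For $n = 11$ the bounds pin down $b = 5$ and $d = 10$, so $B_\alpha$ is a faithful $\Sym(7)$-module of the minimal dimension $5 = 7 - 2$; one checks it is dc-irreducible (using that $4$ is the minimal dimension of a faithful $\Alt(7)$-module in characteristic $2$, while no faithful $\Sym(7)$-module has dimension $4$). Since $7 \ne 6$, the exotic $\Out(\Sym(6))$ alternative of the \hyperlink{h:fglemma}{First Geometrisation Lemma} does not arise, so up to tensoring with the signature $B_\alpha$ satisfies the hypothesis of the \hyperlink{h:rlemma}{Recognition Lemma}; the latter produces a surjection $\ustd(7,L) \twoheadrightarrow B_\alpha$, which as $2 \nmid 7$ is an isomorphism, forcing $5 = \dim \ustd(7,L) = 6\dim L \ge 6$, which is absurd.

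The case $n = 10$ is the main obstacle. The bounds now give $b = 4$ and $d \in \{8, 9\}$, so $B_\alpha$ is a $4$-dimensional faithful $\Sym(6)$-module, and this time the exotic alternative is genuinely available: $B_\alpha$ may be $\rstd(6, \bF_2)$ or an $\Out(\Sym(6))$-twist of it, so no contradiction follows from a dimension count alone. To close this case one must exploit the ambient module: in the tight subcase $2b = d = 8$ one has $B_\alpha \le C_V(\alpha)$ with $C_V(\alpha)/B_\alpha$ zero-dimensional and $V/C_V(\alpha) \simeq B_\alpha$ as $A_{\alpha^\perp}$-modules via $\ad_\alpha$, so $B_\alpha$ fills, up to a zero-dimensional error, two complementary layers of the $8$-dimensional $V$. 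I would then introduce a second bi-transposition $\alpha' = (12)(56)$, so that $\langle \alpha, \alpha'\rangle$ is a Klein four-group of bi-transpositions conjugate in $\Alt(10)$, and, working inside the now-identified $\Sym(6)$-structure on $B_\alpha$, track the lines $L_{(ij)}$ of $B_\alpha$ and the images $\ad_\gamma(B_\alpha)$ and $\ad_{\alpha'}(B_\alpha)$ for suitable $3$-cycles $\gamma$ and for $\alpha'$, aiming to show that the $\Alt(10)$-submodule they generate, which is necessarily all of $V$, cannot be squeezed into $d \le 9$ dimensions. The delicate point is the bookkeeping in characteristic $2$: a Klein four-group of bi-transpositions no longer decomposes $V$ into weight spaces (the \hyperlink{h:wlemma}{Weight Lemma} requires $2$-divisibility), so one is forced to argue through quadratic-action dimension estimates rather than the weight decomposition that made the odd case clean.
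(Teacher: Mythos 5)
Your handling of $n\ge 12$ and $n=11$ essentially matches the paper's: quadraticity gives $2b\le d$, and the \hyperlink{h:fglemma}{First Geometrisation Lemma} applied to $B_\alpha$ as a faithful $\Sym(n-4)$-module gives $b\ge n-6$, closing $n\ge 12$ by counting and $n=11$ by the observation that a $5$-dimensional faithful $\Sym(7)$-module in characteristic $2$ would have to be standard, forcing $2\mid 7$. (Your parenthetical about the minimal dimension of faithful $\Alt(7)$-modules in characteristic $2$ appeals to a figure from the \emph{conjectural} table in \S~\ref{s:questions} that the paper never proves; you don't actually need it --- the dc-irreducible factor of $B_\alpha$ carrying the faithful $\Sym(7)$-action already suffices to run \hyperlink{h:rlemma}{Recognition}.)

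The genuine gap is $n=10$, which you flag as ``the main obstacle'' but never finish: you identify $b=4$, $d\in\{8,9\}$, correctly observe that the $\Out(\Sym(6))$-twist of $\rstd(6,2)$ means no contradiction comes from a dimension count alone, and then propose introducing a second bi-transposition $\alpha'$ and tracking lines, explicitly noting that the \hyperlink{h:wlemma}{Weight Lemma} is unavailable. That is precisely the impasse the paper's proof is designed to sidestep: rather than add more bi-transpositions (where Coprimality is useless), \emph{switch to $3$-cycles} $\gamma\in A_{\alpha^\perp}$, whose order is coprime to $q=2$. Concretely, the paper first shows $\Sigma\simeq\Sym(6)$ is faithful on $V/B_\alpha$ as well as on $B_\alpha$, then inspects the (two known) possibilities for $B_\alpha\in\Mod(\Sigma,4,2)$ to get $\dim[\gamma,B_\alpha]\ge 2$ and separately $\dim[\gamma,V/B_\alpha]\ge 2$, so by \hyperlink{h:clemma}{Coprimality} $\dim B_\gamma\ge 4$. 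Then $\Sigma_{\gamma^\perp}^{(ij)}\simeq\Sym(7)$ acts on the quasi-direct pieces $B_\gamma$ and $C_\gamma$, is faithful on at least one of them, and the \hyperlink{h:fglemma}{First Geometrisation Lemma} (plus $2\nmid 7$) forces that piece to have dimension $\ge 6$ and the other $\le 3$; the previous bound pins this down as $\dim B_\gamma\ge 6$, $\dim C_\gamma\le 3$, so $A_{\gamma^\perp}$ centralises $C_\gamma$, whence $C_\gamma=C_{\gamma'}$ for every $3$-cycle $\gamma'\perp\gamma$, and conjugation contradicts faithfulness. Your instinct to exploit the ambient module $V$ rather than just $B_\alpha$ is correct, but the key missing idea is this pivot from bi-transpositions to $3$-cycles in order to recover coprimality in characteristic~$2$.
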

\begin{proofclaim}
Suppose the action of $A_{\alpha^\perp}$ on $B_\alpha$ is \emph{not} trivial. As in Claim~\ref{l:glemma:cl:odd}, this action extends to a faithful action of some $\Sigma \simeq \Sym(n-4)$ with $A_{\alpha^\perp} < \Sigma < C_A(\alpha^\perp)$.

Let $b:= \dim B_\alpha$, for $\alpha$ a bitransposition; by quadraticity, $b \leq \frac{d}{2}$. (See the proof of the \hyperlink{h:fglemma}{First Geometrisation Lemma}, Claim~\ref{c:fgl:cl:tensoring}).
\begin{itemize}
\item
The case $n= 10$ requires a close look. 
Here, $b \le 4$.

Observe that $\Sigma$ is also faithful on $V/B_\alpha$. If not, then any disjoint bi-transposition $\beta \perp \alpha$ will satisfy $B_\beta \leq B_\alpha$, whence equality; conjugating, we find that $B_\alpha$ is $A$-invariant and centralised by all bi-transpositions, a contradiction. Hence  $B_\alpha,V/B_\alpha\in\Mod(\Sigma)$ are both faithful. 

By the \hyperlink{h:fglemma}{First Geometrisation Lemma}, the $\Sigma$-module $B_\alpha$ is known (be careful that $\dim (V/B_\alpha)$ could be $5$). Now choose $\gamma = (ijk) \in A_{\alpha^\perp}$. Inspection of the possible structures for $B_\alpha \in \Mod(\Sigma, 4, 2)$ gives that $[\gamma,B_\alpha]$ has dimension $2$ if $B_\alpha$ is the standard module; for the exotic twist, the dimension is $3$.
Looking at $V/B_\alpha$, we certainly have $\dim [\gamma,V/B_\alpha] \ge 2$;  otherwise, for any $(ij)(ab)\in A_{\alpha^\perp}$ inverting $\gamma$, the group $\langle \gamma, (ij)(ab)\rangle \simeq \Sym(3)$ does not act faithfully on the $1$-dimensional $[\gamma,V/B_\alpha]$, a contradiction since $q\neq3$.
Using $\dim B_\gamma = \dim [\gamma, V] = \dim [\gamma, B_\alpha] + \dim [\gamma, V/B_\alpha]$ by the \hyperlink{h:clemma}{Coprimality Lemma}, we find that $4 \leq \dim B_\gamma < d$. 

Turn to the action of $\Sigma_{\gamma^\perp}^{(ij)} \simeq \Sym(7)$ on both $B_\gamma$ and $C_\gamma$. It must be faithful on at least one, so by the \hyperlink{h:fglemma}{First Geometrisation Lemma}, one has dimension at least $6$ (and the other at most $3$). All this shows $\dim B_\gamma \ge 6$ and $\dim C_\gamma \le 3$. In particular $A_{\gamma^\perp}$ centralises $C_\gamma$, so $C_\gamma = C_{\gamma'}$ for any $3$-cycle $\gamma'\perp\gamma$. Conjugating, we find a contradiction.
\item
Assume $n\ge 11$. The action of $\Sigma$ on $B_\alpha$ is faithful, so the \hyperlink{h:fglemma}{First Geometrisation Lemma} yields $n-6 \le b \le \frac{d}{2} \le \frac{1}{2}(n-1)$. This is an immediate contradiction when $n\ge 12$, and when $n=11$, the \hyperlink{h:fglemma}{First Geometrisation Lemma} further implies that $q$ divides $n-4 = 7$, again a contradiction.\qedhere
\end{itemize}
\end{proofclaim}

This completes the proof of the \hyperlink{h:glemma}{Geometrisation Lemma}.
\setcounter{claim}{0}
\end{proof}

\begin{remark}
Although we are not able to identify the minimal faithful $V \in \Mod(\Alt(9), d, 2)$ (bearing in mind that there are three such in the classical setting), we can still easily show that the minimal dimension is $n-1$. Indeed, with notation as above, notice first that if the action of $A_{\alpha^\perp}$ on $B_\alpha$ is trivial, then \hyperlink{h:elemma}{Extension} followed by \hyperlink{h:rlemma}{Recognition} shows that $d\ge n-1$ (since $2\nmid 9$). And if the action of $A_{\alpha^\perp}$ on $B_\alpha$ is \emph{not} trivial, the \hyperlink{h:fglemma}{First Geometrisation Lemma} (applied to $\Sigma \cong \Sym(5)$) shows that $b\ge 4$ (since $2\nmid 5$), so by quadraicity, $4\le b \leq \frac{d}{2}$, as desired.
\end{remark}

\subsection{Assembling the Theorem}\label{s:theorem}


\begin{proof}[Proof of the \hyperlink{h:theorem}{Theorem}]
Let $S:=\Sym(n)$ and $A := \Alt(n)$.
Suppose $G = A $ or $S$ and $V \in \Mod(G, d, q)$ is faithful and dc-irreducible with $d < n$. We always assume $n\ge 7$; if $G = \Alt(n)$ and $q=2$, we further assume $n \geq 10$.

We first assume that $V$ is dc-irreducible as an $A$-module. In this case, we are done except when $V \in \Mod(S, d, 2)$ with $7\le n\le9$. This follows immediately from applying (in order) the \hyperlink{h:glemma}{Geometrisation Lemma}, the \hyperlink{h:elemma}{Extension Lemma}, and the \hyperlink{h:rlemma}{Recognition Lemma}. We are also using that $d < n$ forces $\dim L = 1$ (with notation as in the \hyperlink{h:rlemma}{Recognition Lemma}) and $d = n-2$ or $n-1$, according to the value of $q$.

We next address when $V \in \Mod(S, d, 2)$, aiming to show $A_\tau^\perp$ centralises $B_\tau$ for $\tau\in S$ a transposition. 
Set $b:= \dim B_\tau$; as usual, by quadraticity, $2b\le d$.  If $A_\tau^\perp \cong \Alt(n-2)$ does not centralises  $B_\tau$, then the \hyperlink{h:fglemma}{First Geometrisation Lemma} (applied to $S_\tau^\perp$) implies that  $b\ge n-4$, with equality only possible if $2\mid n-2$. Thus, $2(n-4) \le 2b \le d \le n-1$, so $n=7$ and $b=n-4$. But then $2\mid 5$. Thus $A_\tau^\perp$ centralises $B_\tau$, and the \hyperlink{h:rlemma}{Recognition Lemma} applies.

So it remains to dispose of the case when $V$ is dc-irreducible as an $S$-module but not as an $A$-module. Assume this is the case, and consider a dc-irreducible $A$-series for $V$. By our work above, each factor is either a trivial module or is standard, and by simplicity of $A$, some factor is nontrivial, hence of dimension at least $n-2$.  Thus $d = n-1$, and there must be an $A$-submodule $W \le V$ such that either $\dim W = n-2$ and $\dim V/W = 1$ or vice versa.
The former  implies $W = [A,V]$, against dc-irreduciblity of $V$ as an $S$-module. The latter implies $W \le C_V(A)$, hence equality by the structure of $V/W$, again violating dc-irreduciblity as an $S$-module.
\end{proof}

\section*{Acknowledgements}\label{S:acknowledgements}

This is: Paris Album No.~2.
Research began in  early Summer  2018 in Paris by the last two authors who treated the lower bound (but not recognition) for $\Sym(n)$; in Fall 2018, the first two met in Bogot\'{a} and the last two in Bonn to attack $\Alt(n)$. All three authors met in Paris in early Summer 2019 to clarify the approach;  \hyperlink{h:rlemma}{Recognition} and \hyperlink{h:elemma}{Extension}  (as they now appear) evolved throughout 2020. The general context  was only finalized recently.

The second author wishes to thank both the \textsc{cimpa} network (Centre international de math\'{e}matiques pures et appliqu\'{e}es) and the \textsc{ecos}-Nord programme for sponsoring visits to Universitad de los Andes in Bogot\'{a}. He also thanks the Hausdorff Institute for Mathematics (HIM) in Bonn, and Fromagerie Quatrehomme for providing cheese during all Paris stays and trips abroad.

The third author also benefited immensely from the generous support of HIM and Fromagerie Quatrehomme. Additionally, he is  grateful to the Oberwolfach Research Institute for Mathematics (MFO) for  his stay in January 2020 during the program on \textit{Model Theory: Groups, Geometries and Combinatorics}; it was there that the \hyperlink{h:rlemma}{Recognition Lemma} was born.
The work of the third author was partially supported by the National Science Foundation under grant No. DMS-1954127 as well as by 
 the Creative Activity Faculty Awards Program and the College of Natural Science and Mathematics at California State University, Sacramento.

All authors wish to thank Tuna Alt\i{}nel, the first colleague with whom we discussed the new `dimensional' setting.
A Turkish mathematician and ma\^i{}tre de conf\'erences at Universit\'e Claude Bernard Lyon~1, Alt\i{}nel remained a \emph{de facto} prisoner of the Republic of Turkey for over two years. His passport was locked up by the Turkish authorities from April 2019 to June 2021; for 81 of those days, he was as well. 
The second author met with Alt{\i}nel in \.{I}stanbul in January 2020; Gregory Cherlin was also there. The occasion was a hearing in Alt\i{}nel's trial: but high spirits always welcome research seminars, despite political prosecution and judicial harassment. Thanks to international protest, including uprise from various scientific societies, Alt{\i}nel was acquitted in two distinct cases, all appeals were settled by September 2020, and, with incredible delay, his passport was then returned over nine months later.
 However, his freedom of travel is still under under judicial threat. Details on \href{http://math.univ-lyon1.fr/SoutienTunaAltinel/}{math.univ-lyon1.fr/SoutienTunaAltinel}. 

Paris Album No.~2 is dedicated to those laboring for peace 
and 
freedom of speech.


\bibliographystyle{alpha}
\bibliography{SymBib}
\end{document}